\documentclass[11pt]{amsart}

\usepackage{hyperref}
\usepackage{amsfonts}
\usepackage{graphicx}
\usepackage{tabularx}
\usepackage{array}
\usepackage[usenames,dvipsnames]{color}
\usepackage{comment}
\usepackage{amsmath}
\usepackage{amsthm}
\usepackage{amssymb}
\usepackage{fullpage}
\usepackage[dvipsnames]{xcolor}
\usepackage{tikz}
\usepackage{listings}
\usepackage{dsfont}
\usepackage{enumerate}
\usepackage{romannum}

\newtheorem{theorem}{Theorem}[section]

\newtheorem{conjecture}[theorem]{Conjecture}
\newtheorem{problem}[theorem]{Problem}

\newtheorem{lemma}[theorem]{Lemma}
\newtheorem{corollary}[theorem]{Corollary}
\theoremstyle{definition}

\def\epsilon{\varepsilon}

\title{
On Gy\'{a}rf\'{a}s' Path-Colour Problem
}
\author{Ben Cameron}
\address{School of Mathematical and Computational Sciences, University of PEI}
\email{brcameron@upei.ca}
\author{Alexander Clow}
\address{Department of Mathematics, Simon Fraser University}
\email{alexander\_clow@sfu.ca}
\date{\today}

\begin{document}
\pagenumbering{arabic}

\begin{abstract}
    In their 1997 paper titled ``Fruit Salad", Gy\'{a}rf\'{a}s posed the following conjecture:
    there exists a constant $k$ such that if each path of a graph spans a $3$-colourable subgraph, then the graph is $k$-colourable.
    It is noted that $k=4$ might suffice.
    Let $r(G)$ be the maximum chromatic number of any subgraph $H$ of $G$ where $H$ is spanned by a path.
    The only progress on this conjecture comes from Randerath and Schiermeyer in 2002, who proved that if $G$ is an $n$ vertex graph, then $\chi(G) \leq r(G)\log_{\frac{8}{7}}(n)$.
    
    Gy\'{a}rf\'{a}s notes this conjecture is a weakened version of the following open problem of Erd\H{o}s and Hajnal, which Erd\H{o}s shared with Gy\'{a}rf\'{a}s in 1995:
    Is there a function $f$ such that if each odd cycle of a graph spans a subgraph with chromatic number at most $r$, then the chromatic number the graph is at most $f(r)$?

    We prove that for all natural numbers $r$, there exists a graph $G$ with $r(G)\leq r$ and $\chi(G)\geq \lfloor\frac{3r}{2}\rfloor -1$.
    Hence, for all constants $k$ there exists a graph with $\chi - r > k$.
    Our proof is constructive.
    This also provides the first nontrivial lower bound for the function $f$ proposed by Erd\H{o}s and Hajnal, should such a function exist.

    We also study this problem in graphs with a forbidden induced subgraph.
    We show that if $G$ is $K_{1,t}$-free, for $t\geq 4$, then $\chi(G) \leq (t-1)(r(G)+\binom{t-1}{2}-3)$.
    If $G$ is claw-free, then we prove $\chi(G) \leq 2r(G)$.
    Additionally, the graphs $G$ where every induced subgraph $G'$ of $G$ satisfy
    $\chi(G') = r(G')$ are considered. 
    We call such graphs path-perfect, as this class generalizes perfect graphs.
    We prove that if $H$ is a forest with at most $4$ vertices other than the claw, then 
    every $H$-free graph $G$ has $\chi(G) \leq r(G)+1$.
    We also prove that if $H$ is additionally not isomorphic to $2K_2$ or $K_2+2K_1$, then all $H$-free graphs are path-perfect.
\end{abstract}

\maketitle

\section{Introduction}

\subsection{Background}

The chromatic number of a graph $G$, denoted $\chi(G)$, is the minimum number of colours
needed to colour the vertices of $G$ such that adjacent vertices always receive distinct colours.
The clique number of a graph $G$, denoted $\omega(G)$, is the size of a maximum clique in $G$.
If $A \subseteq V(G)$, then $G[A]$ denotes the subgraph of $G$ induced by $A$
and if $H$ is a subgraph of $G$, then we write $G[H]$ to mean $G[V(H)]$.
Let $r(G)$ be the maximum chromatic number of any subgraph $G[P]$ of $G$,
where $P$ is a path in the graph $G$.

Obviously, $\omega(G) \leq r(G) \leq \chi(G)$. 
However, it is unclear whether there exists a function $h$ such that $\chi(G) \leq h(r(G))$.
In fact, the authors are unaware of any graphs $G$ appearing in the literature
that have been shown to satisfy $\chi(G) > r(G) +1$.
It is natural to ask, do such graphs exist?
To see that $\chi(G) = r(G) +1$ is achievable for all $r\geq 3$
one can take a sufficiently large $(r+1)$-critical Gallai graph,
defined in \cite{gallai1963kritische}.
Two graphs with $r = 3$ and chromatic number $4$ are
shown in Figure~\ref{fig:Gallai3-4Critical}.
Trivially, $\chi(G) = r(G)$ for all graphs with $r(G) \leq 2$.

\begin{figure}[h!]
\begin{center}
\scalebox{0.9}{
\includegraphics[width=0.4\linewidth]{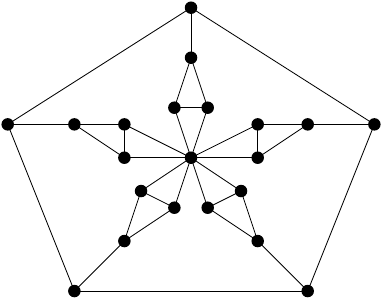}
\hspace{0.25cm}
\includegraphics[width=0.4\linewidth]{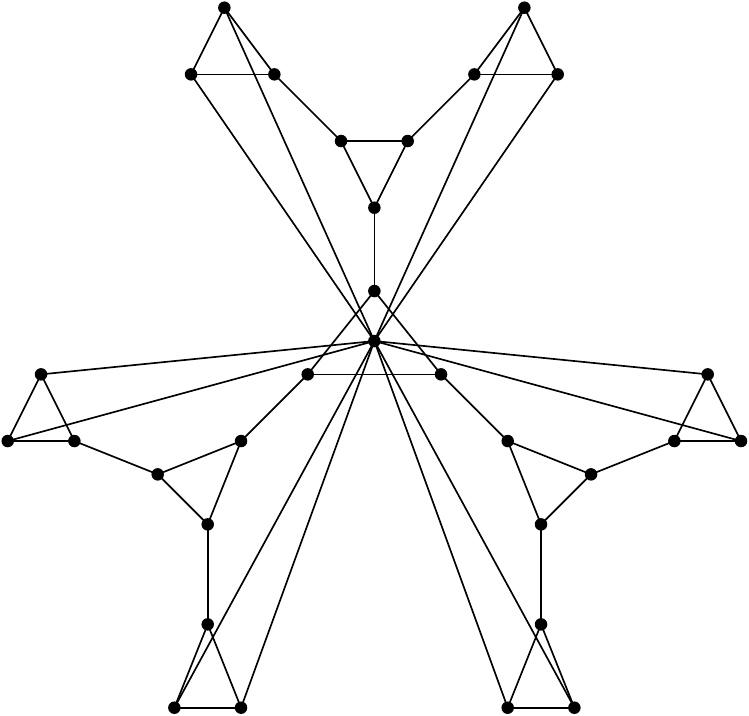}
}
\end{center}
    \caption{Two graphs with $r = 3$ and chromatic number $4$.
    The left graph is taken from Randerath and Schiermeyer \cite{randerath2002chromatic}, 
    while the right graph is a non-traceable $4$-critical graph discovered by Gallai \cite{gallai1963kritische}.}
    \label{fig:Gallai3-4Critical}
\end{figure}

Determining if there exists a function $h$, such that for all graphs $G$, $\chi(G) \leq h(r(G))$
is a relaxed version of the following problem:
Is there a function $f$ such that if each odd cycle of a graph spans a subgraph with chromatic number at most $r$, then the chromatic number the graph is at most $f(r)$?
This problem was published in 1997 by Gy\'{a}rf\'{a}s \cite{gyarfas1997fruit} where it is attributed to Hajnal and Erd\H{o}s.

In fact, Gy\'{a}rf\'{a}s goes further in \cite{gyarfas1997fruit} by making the following conjecture.
The statement has been modified to employ our $r(G)$ notation (see Conjecture~2 in \cite{gyarfas1997fruit} for the original statement).
This conjecture recently resurfaced as Question~1.6 in~\cite{gyarfas2023problems}, a 2023 paper by Gy\'{a}rf\'{a}s .

\begin{conjecture}\label{conjecture: r=3,k=4}
    There exists a constant $k$ (perhaps $k=4$) such that if $G$ is a graph with $r(G)\leq 3$, then $\chi(G) \leq k$. 
\end{conjecture}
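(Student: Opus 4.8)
The plan is to first extract the local structure forced by $r(G)\le 3$ and then try to convert it into a constant bound on $\chi(G)$. The first step is routine. If $G$ contained a $K_4$, a Hamilton path of that $K_4$ would be a path $P$ with $\chi(G[P])\ge 4$, so $r(G)\le 3$ forces $G$ to be $K_4$-free. More usefully, fix any $v\in V(G)$ and any path $Q=u_1\cdots u_m$ inside $G[N(v)]$; then $v\,u_1 u_2\cdots u_m$ is a path of $G$, so $G[\{v\}\cup V(Q)]$ is $3$-colourable, and since $v$ is adjacent to every $u_i$ the vertices of $Q$ use only two colours, i.e.\ $G[V(Q)]$ is bipartite. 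Hence $r(G[N(v)])\le 2$, and any graph with that property has no induced odd cycle (a shortest odd cycle is induced, traceable, and not bipartite), so it is bipartite. Thus every neighbourhood of $G$ induces a bipartite graph: $G$ is locally bipartite and $K_4$-free, and as a free byproduct the common neighbourhood of any two adjacent vertices is independent. Similar arguments with longer paths further constrain how vertices and edges attach to any induced path of $G$.

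For the second step I would pass to a vertex-critical $G$ with $\chi(G)=k+1$, so $\delta(G)\ge k$, and try to colour $G$ outward from a fixed vertex along a breadth-first search tree: at each vertex $v$ one has a $2$-colouring $A\cup B$ of the bipartite graph $G[N(v)]$, both classes independent, and one would hope to extend a fixed bounded palette level by level, using that any path routed through the BFS tree spans a $3$-colourable subgraph to control how a new level interacts with the already-coloured part. An alternative is to take a longest induced path $P$ and classify the ``attachment types'' of the rest of $G$ to $P$: the hypothesis forbids many traceable $4$-chromatic induced configurations (for example $K_4$, the wheel $W_5$, the Moser spindle, and so on), and one would hope that only finitely many attachment types can occur, in the spirit of $\chi$-boundedness proofs for graphs with a forbidden induced path. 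A third, more quantitative route is to bound the size of a ball of radius two about a vertex using $K_4$-freeness and local bipartiteness and then analyse a densest ball; but this seems to recover only a bound of the Randerath--Schiermeyer type, which is logarithmic in $|V(G)|$ rather than constant.

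The main obstacle is exactly the passage from local to global: local bipartiteness alone is not known to bound the chromatic number, so the whole argument must rest on the genuinely path-like part of the hypothesis---that no traceable induced subgraph is $4$-chromatic---and that condition resists localisation, since a path through a dense region can be routed in enormously many ways, so forbidding a traceable $4$-chromatic induced subgraph does not obviously reduce to forbidding any finite family of configurations. None of the heuristics above is known to close this gap, so short of a new idea for turning ``every traceable piece is $3$-colourable'' into a global colouring I do not expect this step to go through in full generality---which is consistent with the conjecture remaining open. The constructive lower bounds and the forbidden-induced-subgraph special cases developed in the remainder of the paper are what one can establish without such an idea.
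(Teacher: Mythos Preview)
The statement you are addressing is a \emph{conjecture}, not a theorem: the paper does not prove it and explicitly presents it as open, noting that the only known progress is the logarithmic bound $\chi(G)\le r(G)\log_{8/7}(n)$ of Randerath and Schiermeyer. So there is no ``paper's own proof'' to compare against; your task was impossible from the start.

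That said, your exploratory write-up is honest and accurate. The first-step observations are correct: $r(G)\le 3$ forces $K_4$-freeness, and prepending $v$ to any path in $G[N(v)]$ shows each neighbourhood is bipartite, so $G$ is locally bipartite. You are also right that this is where the argument stalls: local bipartiteness does not by itself bound $\chi$ (high-girth, high-chromatic graphs are locally bipartite), and the genuinely global hypothesis ``every traceable induced subgraph is $3$-colourable'' does not localise to a finite forbidden family. Your own conclusion---that none of the proposed routes closes the local-to-global gap and that this is consistent with the conjecture remaining open---is exactly the state of the art the paper reports. The paper's contributions lie elsewhere: lower-bound constructions showing $\chi$ can exceed $r$ by an arbitrary additive amount (Theorem~\ref{Thm: Lower result}), and upper bounds in restricted hereditary classes (Theorems~\ref{Thm: K_1,t-free} and~\ref{Thm: Small Forest Free}), not a resolution of Conjecture~\ref{conjecture: r=3,k=4}.
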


The best progress on this conjecture comes from Randerath and Schiermeyer \cite{randerath2002chromatic},
who proved that $\chi(G) \leq r(G) \log_{\frac{8}{7}}(n)$ for all graphs $G$ of order $n$.
The authors are unaware of any previous or subsequent progress on this problem.

We now take a diversion to mention the related problem of $\chi$-boundedness.
Here,
$\chi$-boundedness refers to the study of which
graph classes $\mathcal{G}$ admit functions $f$ such that for all $G\in \mathcal{G}$, 
$\chi(G) \leq f(\omega(G))$.
Motivated by the study of perfect-graphs
(i.e. graphs $G$ where for every induced subgraph $H$ of $G$, $\chi(H) = \omega(H)$)
Gy\'{a}rf\'{a}s began the study of $\chi$-boundedness in \cite{gyarfas1975ramsey}.
Significantly, the famous result by Erd\H{o}s \cite{erdos1959graph}
that there exists graphs with large girth and arbitrarily large chromatic number
implies that many graph classes are not $\chi$-bounded.
In particular, $H$-free graphs are not $\chi$-bounded for any choice of $H$ that contains a cycle.
This leads to the famous Gy\'{a}rf\'{a}s-Sumner conjecture \cite{gyarfas1975ramsey,sumner1981subtrees}
which states that the class of $H$-free graphs are $\chi$-bounded if and only if $H$ is a forest.
For more on $\chi$-boundedness we refer the reader to Scott and Seymour's survey \cite{scott2020survey} and to Char and Karthick's very recent survey for subfamilies of $P_t$-free graphs~\cite{charkarthick2025survey}.

Significantly, one can view the connection between chromatic number and $r$ as a generalization of $\chi$-boundedness.
It is obvious that if a graph class $\mathcal{G}$ is $\chi$-bounded, 
then there exists a function $f$ where for all $G\in \mathcal{G}$, $\chi(G) \leq f(r(G))$.
However, we have no evidence that the converse is true.
In fact, Conjecture~\ref{conjecture: r=3,k=4} is an explicit statement to the contrary.

If one is to believe that there exists a function $h$, such that $\chi(G) \leq h(r(G))$ for all graphs $G$,
then this difference becomes even more extreme.
Of course, the same can be said for considering bounds on chromatic number in terms of the maximum chromatic number of subgraphs spanned by odd cycles, as proposed by Hajnal and Erd\H{o}s.
Our goal here is to take some first steps to exploring this exciting, hard to tackle, area.

\subsection{Our Results}

Our primary contribution is the following theorem.
This result can be viewed as a lower bound on the proposed functions $h$ and $f$, mentioned in Section~1.1,
should either function exist.
Note that this only improves
existing results
for $r\geq 6$.

\begin{theorem}\label{Thm: Lower result}
For all natural numbers $r$, there exists a graph $G$ with $r(G) \leq r$ and 
$$
\chi(G) \geq \Big\lfloor \frac{3r}{2} \Big\rfloor -1.
$$
\end{theorem}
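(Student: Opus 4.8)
The plan is to exhibit, for each $r$, an explicit graph with the required parameters; by the definition of $r(G)$ this amounts to building a graph of large chromatic number in which \emph{every traceable induced subgraph} has chromatic number at most $r$ (here ``traceable'' means ``has a spanning path''). A first reduction is to handle parity separately: it suffices to build, for every $k\ge 2$, a graph $H_k$ with $\chi(H_k)\ge 3k-1$ in which every traceable induced subgraph is $2k$-colourable, and then recover odd values of $r$ by a small adjustment (such as adjoining a universal vertex, which raises the chromatic number by one and, provided the construction also controls the chromatic number of induced subgraphs spanned by two disjoint paths, raises the $r$-value by at most one). So the target is a single family whose chromatic number exceeds its $r$-value by about $k\approx r/2$.

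The construction I would use is inductive, with each step trading $2$ units of the $r$-budget for $3$ units of chromatic number. The natural vehicle for a ``$3$-for-$2$'' trade is the $5$-cycle, since $\chi(C_5)=3$ but $C_5$ (like every odd cycle) behaves like a ``$2$'' in path-cover arithmetic. For the base cases one takes a Gallai-type non-traceable critical graph, as in Figure~\ref{fig:Gallai3-4Critical}, which already realises $\chi=r+1$ for small $r$. For the inductive step I would build $G'$ from $G$ by attaching a $C_5$-gadget that is \emph{completely joined} to $G$ --- forcing $\chi(G')=\chi(G)+3$ --- but that is simultaneously fitted with a traceability-destroying device, for instance a large family of parallel pendant/companion copies, chosen so that no chromatically rich induced subgraph of $G'$ is traceable. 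The key structural point to arrange is that a traceable induced subgraph of $G'$ can never at once use a full $C_5$ from the gadget and the full chromatic strength of $G$; a counting argument on the spanning path should show that committing to the whole $C_5$ costs one chromatic level from the $G$-side.

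Turning step two into a theorem is where the difficulty lies, and it forces one to track a refined parameter: for a graph $F$ and $p\ge 1$ let $r_p(F)$ be the maximum chromatic number of an induced subgraph of $F$ that is covered by at most $p$ vertex-disjoint paths, so $r(F)=r_1(F)$. The induction hypothesis should be an upper bound on $r_p(G)$ for a suitable range of $p$, and the step must establish an inequality of the shape $r_p(G')\le r_{p+O(1)}(G)+2$. To analyse whether an induced subgraph $K\subseteq G'$ is traceable I would split $K$ into its gadget part and its $G$-part and apply the standard description of Hamilton paths in a join in terms of the path-cover numbers of the two sides. The obstacle I expect to dominate the proof is precisely this path-cover bookkeeping: a priori a single non-traceable but few-path-coverable piece inside $G$ could contribute chromatic number as large as $\chi(G)$, so the parallel-copies device and the arithmetic of the $r_p$'s must be tuned so that this cannot happen while $K$ stays traceable. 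Once the $r_p$-bound is maintained through the induction, computing $\chi(G_r)$ is routine, and the value $\lfloor 3r/2\rfloor-1$ falls out, with the $-1$ an artifact of the base case.
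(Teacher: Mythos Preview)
Your inductive step has a genuine gap. If a copy of $C_5$ (indeed, any graph $H$ with $\chi(H)=3$) is \emph{completely joined} to $G$, then a Hamilton path of $C_5$ followed, via a single join edge, by a path $P$ in $G$ witnessing $r(G)$ is a path in $G'$ whose vertex set induces $C_5\vee G[V(P)]$, of chromatic number $3+r(G)$. Hence $r(G')\ge r(G)+3$, and the ``$3$-for-$2$'' trade never occurs. The parallel pendant/companion copies do not help, because the obstruction already arises from a \emph{single} traceable $C_5$; your refined parameter $r_p$ is likewise beside the point here, since the bad path uses exactly one segment on each side. More abstractly, for any complete join $G'=G\vee H$ one has $\chi(G')=\chi(G)+\chi(H)$ and $r(G')\ge r(G)+r(H)$, so a net gain would require $r(H)<\chi(H)$; but you cannot take $\chi(H)=3$ and $r(H)\le 2$, because (as the introduction notes) $r(H)\le 2$ forces $H$ bipartite. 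So no gadget of the shape you describe can exist.

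For contrast, the paper's construction is of a very different kind. It never uses joins; instead each step raises $\chi$ by exactly $1$ while keeping $r$ \emph{unchanged}, and this is iterated $\lfloor r/2\rfloor-1$ times starting from $K_r$. Concretely, $G^{(r)}_{k+1}$ is assembled from $\binom{r+k-1}{2}$ copies of $G^{(r)}_k$, one per pair $\{i,j\}$, attached to new vertices $x_1,\dots,x_{r+k-1}$ so that $\{x_i,x_j\}$ is a $2$-cut isolating the $(i,j)$-copy; a Haj\'os-type identification (the vertices $\bar u,\bar v,\bar w$) then forces $\chi$ up by one. The hard part is showing $r$ does not rise: one proves that the intersection of any path $P$ with a single copy, together with $x_i,x_j$, can be re-threaded into a path of an auxiliary graph $\mathcal G^{(r)}_k$, and that the pattern of copies visited by $P$ defines an auxiliary graph $Q$ which is a linear forest plus at most six extra edges, hence $4$-colourable. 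These four colours, together with colours reserved for the $A$-side and $B$-side of a fixed bipartition of each copy, fit into $r$ exactly when $k\le\lfloor r/2\rfloor-1$; this budget constraint is what stops the construction at $\chi=\lfloor 3r/2\rfloor-1$.
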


This theorem provides a nice corollary, which answers our question regarding the existence of graphs with $\chi > r+1$.

\begin{corollary}\label{Coro: Arbitrary Gap}
    For all constants $k$, there exists a graph $G$ with 
    $$
    \chi(G) - r(G) > k.
    $$
\end{corollary}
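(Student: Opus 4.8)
The plan is to obtain this as an immediate consequence of Theorem~\ref{Thm: Lower result}. Fix an arbitrary constant $k$. By Theorem~\ref{Thm: Lower result}, for every natural number $r$ there is a graph $G_r$ with $r(G_r) \le r$ and $\chi(G_r) \ge \lfloor 3r/2 \rfloor - 1$. Since $\lfloor 3r/2\rfloor \ge (3r-1)/2$, this gives
\[
\chi(G_r) - r(G_r) \;\ge\; \Big\lfloor \frac{3r}{2}\Big\rfloor - 1 - r \;\ge\; \frac{3r-1}{2} - 1 - r \;=\; \frac{r-3}{2}.
\]
The right-hand side tends to infinity with $r$, so the only step is to choose $r$ large enough — any natural number $r > 2k+3$ works — and then take $G := G_r$; for this choice $\chi(G) - r(G) \ge (r-3)/2 > k$, as required.

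The only points needing a word of care are (i) the inequality $\lfloor 3r/2\rfloor \ge (3r-1)/2$, which holds because the right-hand side is an integer when $r$ is odd and is at most $3r/2$ when $r$ is even, and (ii) that a valid natural number $r$ exists for every $k$, which is clear since one may always take $r = \max\{4, \lceil 2k+4\rceil\}$. Beyond invoking Theorem~\ref{Thm: Lower result}, the argument is elementary arithmetic, so I do not anticipate any genuine obstacle; all the difficulty lies in the construction behind Theorem~\ref{Thm: Lower result} itself.
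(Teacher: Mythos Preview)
Your proof is correct and follows the same approach as the paper, which treats the corollary as an immediate consequence of Theorem~\ref{Thm: Lower result} without giving an explicit argument; your arithmetic verification simply makes the implicit reasoning precise.
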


Our proof is constructive and deterministic. 
That is, for all $r$, we demonstrate a graph $G$ with $r(G) = r$ and $\chi(G) = \lfloor \frac{3r}{2}\rfloor -1$.
It remains unclear if there exists a constant $c$ such that $\chi(G)~\leq c\cdot r(G)$ for all graphs.
However, Theorem~\ref{Thm: Lower result} is best possible using our construction.

In addition to this,
and in the vein of $\chi$-boundedness,
we prove upper bounds on the chromatic number of graphs 
with a forbidden induced subgraph.
The first of our results here concerns graphs with a forbidden induced star.
Note that $K_{1,t}$-free graphs were shown to be $\chi$-bounded by Gy\'{a}rf\'{a}s in \cite{gyarfas1987problems},
where near optimal $\chi$-binding functions were determined for all large enough $t$.
When $t$ is large, the best $\chi$-binding functions for $K_{1,t}$-free graphs are far from our functions that bind
the chromatic number of $K_{1,t}$-free graphs in terms of $r(G)$.

\begin{theorem}\label{Thm: K_1,t-free}
Let $t\geq 4$ be an integer.
If $G$ is a $K_{1,t}$-free graph and $r(G)\leq r$, then 
$$
\chi(G) \leq (t-1)\Big(r+\binom{t-1}{2}-3\Big).
$$
If $G$ is $K_{1,3}$-free, then $\chi(G) \leq 2r(G)$.
\end{theorem}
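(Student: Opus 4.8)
The plan is to handle the two regimes of $t$ separately, though the $K_{1,3}$-free (claw-free) case is really the heart of the matter. Start with claw-free graphs. The key structural fact is that in a claw-free graph $G$, the neighbourhood $N(v)$ of any vertex $v$ has independence number at most $2$, so $G[N(v)]$ is the complement of a triangle-free graph; by Ramsey-type bounds $\chi(G[N(v)]) = O(\omega(G[N(v)]))$, but more importantly $G[N(v)]$ is perfect-adjacent enough that we can extract long paths. The plan is to take a vertex $v$ of maximum degree and argue that $G[N(v)]$ contains an induced path $P$ on many vertices, so that $G[P \cup \{v\}]$ contains a path (namely, $v$ together with $P$, since $v$ is adjacent to everything in $P$, one can route a Hamiltonian path through $\{v\} \cup V(P)$ as long as $P$ has at least one vertex — actually $v$ appended to an endpoint of $P$). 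Then $r(G) \geq \chi(G[\{v\} \cup V(P)]) \geq 1 + \chi(G[V(P)])$, and the task reduces to showing $\chi(G[N(v)])$ is controlled by the largest chromatic number of an induced path inside $N(v)$.

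\medskip

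Concretely, for the claw-free case, I would prove: if $H$ is a graph with $\alpha(H) \leq 2$, then $\chi(H)$ is at most the maximum chromatic number over all induced subgraphs of $H$ spanned by a path in $H$. Since $\alpha(H) \le 2$, $\overline{H}$ is triangle-free, and induced paths in $H$ correspond to induced paths (of the same length) in... no — complements of paths. So the relevant substructure is the complement of an induced path, i.e. an induced "antipath". Here the cleaner route is: a graph with $\alpha(H) \le 2$ on $n$ vertices has $\chi(H) \ge n/2$, and it is known (or provable by a short greedy/DFS argument) that such $H$ has a Hamiltonian path or close to it; in fact any connected graph with independence number $2$ has a Hamiltonian path (a classical result), so $H$ itself is spanned by a path, giving $\chi(H) \le r(G)-1$ applied to the neighbourhood $H = G[N(v)]$, hence $d(v) = |V(H)| \le 2\chi(H) \le 2(r(G)-1)$ is not quite enough — instead one gets $\chi(G[\{v\}\cup N(v)]) = 1 + \chi(G[N(v)]) \le r(G)$, and since $v$ has maximum degree, a degeneracy/greedy argument bounds $\chi(G) \le \Delta(G)+1$, which does not directly give $2r(G)$. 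The right move is: iterate. Either $G$ has a vertex of degree $< 2r(G) - 1$, colour it last; or every vertex has large degree, and then $G[N(v)]$ is spanned by a path (connectedness of $N(v)$ follows from claw-freeness plus connectivity, or one passes to the component), so $\chi(G) \le \chi(G[N(v)]) + 1 \le r(G)$ — wait, that would be stronger than claimed. I expect the actual argument needs the weaker conclusion because $G[N(v)]$ need not be connected; one takes the component of $G[N(v)]$ realizing its chromatic number, and loses a factor up to $2$ in reassembling, yielding $2r(G)$.

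\medskip

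For the case $t \ge 4$: the analogue of "$\alpha(N(v)) \le 2$" becomes "$\alpha(N(v)) \le t-2$". The plan is to invoke Gy\'arf\'as-type decompositions of graphs with bounded independence number: a graph $H$ with $\alpha(H) \le s$ can be covered by few induced paths, or more precisely partitioned into at most $s$ parts each of which induces a subgraph with a Hamiltonian path (this is a theorem on path partitions of graphs with bounded independence number — essentially the Gallai–Milgram / Dilworth-flavoured statement that $\alpha(H) \le s$ implies a path cover of size $s$). Applying this with $H = G[N(v)]$ and $s = t-2$: each path-part $P_i$ gives $\chi(G[\{v\} \cup V(P_i)]) \le r(G)$ since $\{v\} \cup V(P_i)$ is spanned by a path, so $\chi(G[V(P_i)]) \le r(G) - 1$, and summing over the $t-2$ parts gives $\chi(G[N(v)]) \le (t-2)(r(G)-1)$. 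Then one wants $\chi(G) \le \chi(G[N(v)]) + 1$ for a max-degree vertex with a genuinely large-degree hypothesis, or an iterated degeneracy argument: if some vertex has degree less than $(t-1)(r + \binom{t-1}{2} - 3)$ peel it off; else every neighbourhood is large and dense, and one extracts the bound. The extra $\binom{t-1}{2} - 3$ additive term and the $(t-1)$ (rather than $t-2$) multiplier strongly suggest the path partition is not quite into $t-2$ Hamiltonian-traceable parts; instead the neighbourhood with $\alpha \le t-2$ is partitioned into $t-1$ cliques-plus-paths or the path cover argument loses a $\binom{t-1}{2}$-sized "junk" set that must be coloured separately (e.g. a set meeting every long path, of size bounded by a Ramsey number $\binom{t-1}{2}$).

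\medskip

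The main obstacle I anticipate is precisely controlling this bookkeeping: going from "$G[N(v)]$ has bounded-chromatic-number components / path-partition parts" to a global bound on $\chi(G)$. A single max-degree vertex only bounds $\chi$ via $\Delta + 1$ unless one has true structure, so the real engine must be an induction where at each step one either finds a low-degree vertex (and deletes it, maintaining $r(G) \le r$ monotonically, which is clear since $r$ is monotone under taking subgraphs) or is in a "minimum degree large" situation where the neighbourhood structure forces the bound; closing that case analysis — in particular ensuring the path-partition / connectivity hypotheses on $G[N(v)]$ actually hold, and pinning down the exact additive Ramsey loss $\binom{t-1}{2} - 3$ — is where the work lies. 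The claw-free bound $\chi(G) \le 2r(G)$ I expect to fall out of the $s = 2$ instance of the same machinery, with the factor $2$ being exactly the path-cover number bound $\alpha = 2$, and the clean form (no additive term) because for $\alpha \le 2$ connected graphs are Hamiltonian-traceable so there is no junk set.
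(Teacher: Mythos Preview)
Your proposal is missing the central tool: the Chv\'atal--Erd\H{o}s condition (if $\kappa(H) \ge \alpha(H)$ then $H$ is Hamiltonian). The paper first proves a lemma about graphs of bounded independence number: if $\alpha(H) < t$ (for $t \ge 4$) then $\chi(H) \le r(H) + \binom{t-1}{2} - 3$. This is by induction on $t$ --- if $\kappa(H) \ge t-1 \ge \alpha(H)$, Chv\'atal--Erd\H{o}s gives Hamiltonicity and hence $\chi(H) = r(H)$; otherwise there is a vertex cut $S$ with $|S| \le t-2$, each component of $H - S$ has strictly smaller independence number so the inductive bound applies, and one spends $|S| \le t-2$ fresh colours on $S$. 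Summing the increments $3 + 4 + \cdots + (t-2) = \binom{t-1}{2} - 3$ explains exactly the additive term you were puzzling over. Your Gallai--Milgram path-partition route would instead give only $\chi(H) \le (t-1)(r-1)$, which after the next step yields $\chi(G) \le (t-1)^2(r-1)$ --- a genuine bound, but weaker than the one stated.

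For the passage from $\chi(G[N[v]])$ to $\chi(G)$, the paper does cleanly what you half-suggest: pass to a $k$-vertex-critical $G$ so that $\delta(G) \ge k-1$; then for any $v$ the graph $H = G[N[v]]$ has $|V(H)| \ge k$ and $\alpha(H) \le t-1$ (note: $t-1$, not $t-2$ --- your off-by-one would propagate). The pigeonhole inequality $\frac{k}{t-1} \le \frac{|V(H)|}{\alpha(H)} \le \chi(H)$ combined with the lemma above gives the bound directly. The claw-free case is identical, with the base-case lemma ``$\alpha \le 3 \Rightarrow$ path-perfect'' (itself a nontrivial use of Chv\'atal--Erd\H{o}s plus a careful $\kappa \le 2$ analysis) replacing the inductive one, yielding $\frac{k}{2} \le r(G)$. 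No degeneracy iteration or path-cover bookkeeping is needed.
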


Significantly,
Chudnovsky and Seymour \cite{chudnovsky2010claw} proved that if $G$ is a connected $K_{1,3}$-free graph
with independence number at least $3$, then $\chi(G) \leq 2\omega(G)$.
Furthermore, they showed this bound to be tight.
Hence, for all such graphs, Theorem~\ref{Thm: K_1,t-free} does not improve the best $\chi$-binding function.
As a result, we do not expect this bound to be tight. Interestingly though, if $\alpha(G) \leq 2$ and $G$ is claw-free, then the bound $\chi(G) \leq \omega(G)^2$ cannot be improved to linear.
To see this observe that there exist claw-free graphs $G$ of order $n$ with $\alpha(G) \leq 2$, so $\chi(G) \geq \frac{n}{2}$,  such that $\omega(G) = O(\sqrt{n\log(n)})$ \cite{kim1995ramsey}.
It is worth noting that an immediate corollary of the Chv\'{a}tal-Erd\H{o}s condition for a graph to be Hamiltonian (see Theorem~\ref{Thm: Chvatel-Erdos}), that if $\alpha(G) \leq 2$, then $\chi(G) = r(G)$.

Next, we consider the analogue of perfect graphs for this parameter $r$.
That is, we study which graphs $H$ satisfy that every $H$-free graph $G$ has $\chi(G) = r(G)$.
If $G$ is perfect, then $\chi(G) = r(G)$ trivially, however the converse is not true.
For example, we will prove that every $(P_3+K_1)$-free graph $G$ satisfies  $\chi(G) = r(G)$.
However, one can easily verify that this class is not perfect using the strong perfect graph theorem \cite{chudnovsky2006strong}.
Notice that one can also verify $(P_3+K_1)$-free graphs are not perfect by combining 
the weak perfect graph theorem \cite{lovasz1972normal} and the famous result by Erd\H{o}s \cite{erdos1959graph}
that there exists graphs with high girth and high chromatic number.

This makes the 
analogue of perfect graphs for $r$ interesting.
Formally, 
if every induced subgraph $H$ of a graph $G$ satisfies $\chi(H) = r(H)$, then we say $G$ is \emph{path-perfect}.
We note that proving every graph in a family $\mathcal{G}$ is path-perfect is equivalent to showing that every vertex-critical graph in $\mathcal{G}$ is traceable, as every graph that is not vertex-critical will have the same chromatic number as one of its vertex-critical induced subgraphs.
We prove the following theorem for graphs forbidding a small induced forest.

\begin{theorem}\label{Thm: Small Forest Free}
    If $H$ is a forest on at most $4$ vertices not isomorphic to $K_{1,3}$, then every $H$-free graph $G$ has $\chi(G) \leq r(G)+1$.
    If $H$ is additionally not isomorphic $2K_2$ or $K_2 + 2K_1$, then every $H$-free graph is path-perfect.
\end{theorem}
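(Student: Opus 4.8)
The plan is to run through the forests $H$ on at most four vertices — up to isomorphism $K_1$, $2K_1$, $K_2$, $3K_1$, $K_2+K_1$, $P_3$, $4K_1$, $K_2+2K_1$, $2K_2$, $P_3+K_1$, $P_4$, and $K_{1,3}$ — and prove the two claimed bounds for each of the eleven that are not the claw. Since $H$-freeness is hereditary, it suffices (as noted in the introduction) to argue about vertex-critical graphs: if every vertex-critical $H$-free graph is traceable then every $H$-free graph is path-perfect, and if every vertex-critical $H$-free graph $G$ has a path through all but at most one of its vertices then $\chi(G)\le r(G)+1$ (such a path, missing a vertex $w$, spans $G-w$, and $\chi(G-w)=\chi(G)-1$ by criticality, so $r(G)\ge\chi(G)-1$). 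I will use throughout that a vertex-critical graph with $\chi\ge 3$ is $2$-connected with minimum degree at least $\chi-1$, while for $\chi\le 2$ the graph is $K_1$ or $K_2$, which is traceable.

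Several families are pinned down by their structure. A $K_1$-free graph is empty, a $K_2$-free graph is edgeless, and a $2K_1$-free graph is complete; a $P_3$-free graph is a disjoint union of cliques; a $(K_2+K_1)$-free graph is complete multipartite; and a $P_4$-free graph is a cograph, hence perfect, so $\chi=\omega\le r\le\chi$ and it is path-perfect. In the clique and complete-multipartite cases a vertex-critical member is complete — deleting a vertex of a part of size at least two does not change $\chi$ — and hence traceable. For $kK_1$-free graphs we have $\alpha(G)\le k-1$: if $k=3$ then a vertex-critical $G$ with $\chi\ge 3$ satisfies $\kappa(G)\ge 2\ge\alpha(G)$, so the Chv\'atal--Erd\H{o}s condition (Theorem~\ref{Thm: Chvatel-Erdos}) yields a Hamilton cycle; if $k=4$ then applying the same condition to $G\vee K_1$, which has connectivity $\kappa(G)+1\ge 3$ and independence number $\alpha(G)\le 3$, gives a Hamilton cycle of $G\vee K_1$ and therefore a Hamilton path of $G$. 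So these graphs are traceable as well.

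Next come $(P_3+K_1)$-free graphs. If such a graph is disconnected then no component can contain an induced $P_3$ (it would form an induced $P_3+K_1$ with any vertex of another component), so it is a disjoint union of cliques; hence a vertex-critical $(P_3+K_1)$-free graph is connected. Passing to complements, $G$ is $(P_3+K_1)$-free exactly when $\overline G$ is paw-free, so by Olariu's theorem every component of $\overline G$ is triangle-free or complete multipartite. A connected complete-multipartite graph with at least two parts is the complement of a disjoint union of at least two cliques, which is never vertex-critical; since the factors of a vertex-critical join are themselves vertex-critical, no component of $\overline G$ can be of that form, so every component of $\overline G$ is triangle-free. Therefore $\alpha(G)=\omega(\overline G)\le 2$, and we are back in the case already handled; thus $(P_3+K_1)$-free graphs are path-perfect.

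It remains to treat $2K_2$-free and $(K_2+2K_1)$-free graphs, which is where essentially all the work lies — and these are exactly the two forests for which we only claim $\chi\le r+1$. The key characterizations are that $G$ is $2K_2$-free if and only if $V(G)\setminus(N[u]\cup N[v])$ is independent for every edge $uv$, and $G$ is $(K_2+2K_1)$-free if and only if that set is a clique for every edge $uv$. For a vertex-critical $G$ with $\chi\ge 3$ I would fix an edge $uv$, write $C=N[u]\cup N[v]$ and $X=V(G)\setminus C$, and build a path through all but at most one vertex of $G$ by growing the dominating edge $uv$: the minimum-degree bound $\delta(G)\ge\chi-1$ together with $2K_2$-freeness (respectively $(K_2+2K_1)$-freeness) forces the neighbourhoods in $C$ of the vertices of $X$, and of the private and common neighbourhoods of $u$ and $v$, to be heavily interconnected, which should allow the leftover vertices of $C$ and all but at most one vertex of $X$ to be threaded into a single path; for $(K_2+2K_1)$-free graphs one may also assume $\alpha(G)\ge 3$, since $\alpha(G)\le 2$ is covered above. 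The main obstacle here is precisely that a $2K_2$-free graph with a dominating edge need not be traceable (stars are $2K_2$-free), so vertex-criticality must be used in an essential way to bound how spread out $G$ can be; and because the complement of a $2K_2$-free (respectively $(K_2+2K_1)$-free) graph is only $C_4$-free (respectively diamond-free), which is far weaker than paw-free, the clean reduction "vertex-critical $\Rightarrow\alpha\le 2$" available for $P_3+K_1$ is not available, which is why path-perfectness is not claimed for these two forests.
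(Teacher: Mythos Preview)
Your handling of the ``easy'' forests is correct and in places cleaner than the paper's: the $G\vee K_1$ trick for $4K_1$-free graphs is a slick shortcut compared with the paper's case analysis (Lemma~\ref{Lemma: Ind = 1,2,3}), and your derivation of $\alpha\le 2$ for vertex-critical $(P_3+K_1)$-free graphs via Olariu's paw-free theorem is a self-contained alternative to the paper's citation of Lemma~\ref{Lemma: P_3+K_1 vertex-critical}.

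The genuine gap is in the $2K_2$-free and $(K_2+2K_1)$-free cases. What you wrote there is a plan, not a proof: phrases like ``heavily interconnected'' and ``should allow'' are not arguments, and it is not at all clear that a vertex-critical $2K_2$-free graph admits a path missing at most one vertex. More importantly, you are working much harder than necessary. The paper's approach bypasses vertex-criticality entirely: it proves directly (Lemmas~\ref{Lemma: Longest paths in K_2+2K_1} and~\ref{Lemma: Longest paths in 2K_2}) that in any connected $2K_2$-free or $(K_2+2K_1)$-free graph, if $P$ is a \emph{longest} path then $V(G)\setminus V(P)$ is independent. This is a two-line argument in each case --- for $2K_2$-free, an edge outside $P$ would be forced to attach to $v_2$, contradicting maximality; for $(K_2+2K_1)$-free, an edge outside $P$ forces $v_1v_k\in E(G)$, closing $P$ into a cycle and again contradicting maximality by connectivity. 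Once $V(G)\setminus V(P)$ is independent, any $r(G)$-colouring of $G[V(P)]$ extends to an $(r(G)+1)$-colouring of $G$ using one extra colour on the leftover independent set, giving $\chi(G)\le r(G)+1$ immediately. Your characterizations of $2K_2$-freeness and $(K_2+2K_1)$-freeness in terms of $V(G)\setminus(N[u]\cup N[v])$ are correct but unused; the longest-path viewpoint is what makes the argument go through.
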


We believe that all forests $H$ on at most $4$ vertices, baring perhaps the claw ($K_{1,3}$),
every $H$-free graph is path-perfect.
However, we leave proving that $2K_2$-free graphs and $(K_2 + 2K_1)$-free graphs 
are path-perfect as an open problem.
For more on this, and other open problems, see Section~\ref{sec: Future Work}

The rest of paper is organized as follows.
We begin in Section~\ref{sec: star-free} by considering graphs with a forbidden induced star.
Next, in Section~\ref{sec: small H-free} we study graphs with a forbidden induced forest on at most $4$ vertices.
In Section~\ref{sec: construction} for all $r\geq 6$ we construct a sequence of graphs $\{G^{(r)}_k\}$.
For a fixed $r\geq 6$, the graph $G^{(r)}_{\lfloor\frac{r}{2}\rfloor - 1}$ will achieve the bound in Theorem~\ref{Thm: Lower result}.
In Section~\ref{sec: global colour} we will determine the chromatic number $G^{(r)}_{\lfloor\frac{r}{2}\rfloor - 1}$.
While in Section~\ref{sec: path-colour} we determine $r(G^{(r)}_{\lfloor\frac{r}{2}\rfloor - 1})$.

\subsection{Notation and preliminaries}

We refer the readers to~\cite{Wilson} for most standard graph theory notation, but we will now briefly list some definitions and notation to avoid any confusion.
All graphs considered in this paper are undirected, finite, simple, and loopless. 
Even though all graphs are undirected, we will sometimes use an ordered pair to denote an edge.
Every colouring in this paper is assumed to be a proper colouring.
For example, we may write $G-(u,v)$ to denote the graph obtained be removing the edge between $u$ and $v$.
For a graph $G$ we let $\chi(G)$, $\kappa(G)$, $\alpha(G)$, $\omega(G)$, and $\delta(G)$ denote its chromatic number, vertex-connectivity, independence number, clique number, and minimum degree respectively.
We say that a graph $G$ is $H$-free if it does not contain the graph $H$ as an induced subgraph.
For a graph $G$ and two subsets $S$ and $T$ of $V(G)$, we use $E(S,T)$ to denote the set of edges with one endpoint in $S$ and the other in $T$. We let $G[S]$ denote the subgraph of $G$ induced by $S$. We let $N[S]$ denote the \textit{closed neighbourhood} of the set $S$ consisting of all neighbours of $S$ together with the set $S$ and we let $N(S)=N[S]\setminus S$ be the \textit{open neighbourhood}. When $S=\{v\}$, we simply write $N[v]$ and $N(v)$.
We say a graph is \textit{traceable} if it has a Hamiltonian path. 
A graph $G$ is \textit{vertex-critical} if $\chi(G-v)<\chi(G)$ for all $v\in V(G)$. 
The disjoint union of two graphs is denoted $G+H$. 
The the disjoint union of $\ell$ copies of the same graph $G$ is denoted $\ell G$.
For a positive integer $k$, we let $[k]$ denote the set $\{1,2,\dots,k\}$.

We additionally use some notation from order theory.
We assume the reader knows the definition of a partially ordered set (poset).
Given a poset $(P,\preceq)$, 
$x$ is said to \emph{cover} $y$ if $x\prec y$ and for no other $z\in P$ do we have $x\prec z\prec y$.
A \emph{covered chain} $u_1\prec u_2\prec \cdots \prec u_{\ell}$ is a chain such that $u_i$ covers $u_{i+1}$ for each $i\in \{1,\dots\ell-1\}$.
Also, an element $m\in P$ is said to be \textit{minimal} with respect to $\preceq$ if there is no element $p\in P\setminus\{m\}$ such that $p\preceq m$.

\section{$K_{1,t}$-free Graphs}
\label{sec: star-free}

In this section we will prove upper bounds for the chromatic number of $K_{1,t}$-free graphs $G$ in terms of $r(G)$.
Since $K_{1,t}$-free graphs are $\chi$-bounded, as proven by Gy\'{a}rf\'{a}s \cite{gyarfas1987problems}, 
there trivially exists such bounds.
Our focus then is to prove good upper bounds, rather than the existence of upper bounds, for chromatic number. 

Of course, if $G$ is Hamiltonian, then $\chi(G) = r(G)$, since $G$ is a subgraph of itself and $G$ is spanned by a path.
We will use this observation to great effect in this section.
Specifically, we leverage the well known
Chv\'{a}tal-Erd\H{o}s condition for a graph to be Hamiltonian.

\begin{theorem}[Theorem~1 \cite{chvatal1972note}]\label{Thm: Chvatel-Erdos}
    If $G$ is a graph with at least $3$ vertices and $\kappa(G) \geq \alpha(G)$, then $G$ is Hamiltonian.
\end{theorem}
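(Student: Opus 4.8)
The plan is to run the classical longest-cycle argument. First I would dispose of the degenerate case $\alpha(G)=1$: then $G$ is complete, and a complete graph on at least $3$ vertices is Hamiltonian. So assume henceforth $\alpha(G)\geq 2$, hence $\kappa(G)\geq 2$, so $G$ is $2$-connected and in particular contains a cycle of length at least $3$. Let $C$ be a longest cycle in $G$, fix a cyclic orientation of $C$, and for $x\in V(C)$ write $x^+$ for the successor of $x$ along this orientation. Suppose for contradiction that $C$ is not Hamiltonian, let $H$ be a connected component of $G-V(C)$, say with $v\in V(H)$, and set $A:=N(V(H))\cap V(C)$, the set of attachment vertices of $H$ on $C$.

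The heart of the argument is to show that $B:=\{x^+ : x\in A\}\cup\{v\}$ is an independent set with $|B|=|A|+1$. Distinctness of the successors $x^+$ is immediate, and $v$ lies off $C$, so $|B|=|A|+1$. For independence I would argue from the maximality of $C$: (i) if some $w\in V(H)$ were adjacent to $x^+$ for some $x\in A$, then since $H$ is connected one can route a path inside $H$ from a neighbour of $x$ in $H$ to $w$ and splice it between $x$ and $x^+$, producing a strictly longer cycle — contradiction; in particular $v$ is adjacent to no $x^+$. (ii) If $x^+y^+\in E(G)$ for distinct $x,y\in A$, then combining the two $C$-arcs determined by $x,x^+,y,y^+$, the chord $x^+y^+$, and a path through $H$ joining a neighbour of $x$ to a neighbour of $y$ yields a cycle through every vertex of $C$ plus at least one vertex of $H$, again contradicting maximality.

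Next I would establish $|A|\geq\kappa(G)$. The set $A$ separates $V(H)$ from $V(C)\setminus A$ in $G$, since every vertex of $H$ has all its neighbours inside $V(H)\cup A$. Both sides are nonempty: $V(H)$ contains $v$, and $V(C)\setminus A$ is nonempty because otherwise $\{x^+ : x\in A\}$ would be all of $V(C)$, contradicting the independence just proved (a cycle on at least $3$ vertices is not independent). Hence $A$ is a vertex cut, so $|A|\geq\kappa(G)$, and therefore $\alpha(G)\geq|B|=|A|+1\geq\kappa(G)+1>\kappa(G)$, contradicting $\kappa(G)\geq\alpha(G)$. Thus $C$ must be Hamiltonian.

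The only genuinely delicate point is the cycle-extension bookkeeping in step (ii): one must check that splicing the $H$-path into $C$ together with the chord $x^+y^+$ really produces a cycle, not merely a closed walk. This reduces to observing that the two $C$-arcs are internally disjoint (they partition $V(C)$) and that the $H$-path avoids $C$ (as $H$ is a component of $G-V(C)$), which a picture with the fixed orientation makes routine. Everything else is standard.
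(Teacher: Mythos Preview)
Your argument is correct and is essentially the original Chv\'atal--Erd\H{o}s proof. Note, however, that the paper does not prove this theorem at all: it is quoted as Theorem~1 of \cite{chvatal1972note} and used as a black box, so there is no ``paper's own proof'' to compare against. Your write-up would serve perfectly well as a self-contained proof of the cited result.
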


We begin by examining the chromatic number of graphs with bounded $r$ and bounded independence number.

\begin{lemma}\label{Lemma: Ind = 1,2,3}
    If $G$ is a graph with $\alpha(G) \leq 3$, then $G$ is path-perfect.
\end{lemma}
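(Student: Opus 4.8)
The plan is to reduce to vertex-critical graphs and then apply the Chv\'atal--Erd\H{o}s condition. The class of graphs with $\alpha \le 3$ is closed under taking induced subgraphs, and $r(H) \le r(G)$ whenever $H$ is an induced subgraph of $G$ (a path in $H$ is a path in $G$, and $H[P]$ is a subgraph of $G[P]$). So, as noted in the introduction, it suffices to show that every vertex-critical graph $G$ with $\alpha(G) \le 3$ is traceable: if $G$ is traceable then $G$ itself is spanned by a path, hence $r(G) = \chi(G)$, and for an arbitrary $H$ with $\alpha(H)\le 3$ one picks a vertex-critical induced subgraph $H'$ with $\chi(H') = \chi(H)$, which is then traceable, giving $r(H) \ge r(H') = \chi(H') = \chi(H) \ge r(H)$.

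So fix a vertex-critical $G$ with $\alpha(G) \le 3$; the cases $|V(G)| \le 2$ are trivially traceable, so assume $|V(G)| \ge 3$. First I would show $G$ is $2$-connected. It is connected: if $G = G_1 + G_2$ with $\chi(G_1) \ge \chi(G_2)$, deleting a vertex of $G_2$ does not drop $\chi$, contradicting vertex-criticality. It has no cut vertex: if $v$ were one, let $B_1,\dots,B_m$ be the subgraphs induced by $v$ together with each component of $G-v$; then $\chi(G) = \max_i \chi(B_i)$ (glue colourings of the $B_i$ agreeing at $v$), and deleting a vertex of a component other than the one realizing the maximum leaves that $B_i$ intact, again contradicting vertex-criticality. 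Hence $\kappa(G) \ge 2$.

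Next I would upgrade Theorem~\ref{Thm: Chvatel-Erdos} to traceability: if $\kappa(H) \ge \alpha(H) - 1$ and $|V(H)| \ge 2$, then $H$ is traceable. Adjoin to $H$ a new vertex adjacent to all of $V(H)$; this raises connectivity by exactly $1$ and leaves $\alpha$ unchanged, so the enlarged graph satisfies the hypothesis of Theorem~\ref{Thm: Chvatel-Erdos}, hence is Hamiltonian, and deleting the new vertex from a Hamiltonian cycle gives a Hamiltonian path of $H$. Applying this with $H = G$: since $\alpha(G) \le 3$ we get $\alpha(G) - 1 \le 2 \le \kappa(G)$, so $G$ is traceable and $\chi(G) = r(G)$, completing the argument.

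I do not expect a serious obstacle here; the points needing care are the clean reduction to vertex-critical graphs (including monotonicity of $r$ under induced subgraphs) and the verification that adjoining a universal vertex raises $\kappa$ by exactly $1$ while not raising $\alpha$, which is precisely what converts $\kappa \ge \alpha - 1$ into traceability. I would also sanity-check the boundary cases $\alpha(G) \le 1$ (then $G$ is complete, hence traceable) and $\alpha(G) = 2$ (then connectedness alone gives $\kappa(G) \ge 1 = \alpha(G) - 1$), so that the single argument uniformly covers all of $\alpha(G) \le 3$.
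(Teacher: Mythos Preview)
Your proposal is correct and is considerably shorter than the paper's own argument. The key step you use that the paper does not is the traceability strengthening of the Chv\'atal--Erd\H{o}s theorem (obtained by adjoining a universal vertex): once you know a vertex-critical graph on at least three vertices is $2$-connected, the inequality $\kappa(G)\ge 2\ge \alpha(G)-1$ immediately gives a Hamilton path.

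The paper instead works with a smallest counterexample (which is, implicitly, vertex-critical) but does \emph{not} invoke the traceability version. When $\kappa(G)\ge 3$ it applies Theorem~\ref{Thm: Chvatel-Erdos} directly; when $\kappa(G)\in\{1,2\}$ it takes a minimum cut $S$, observes that the two sides $A,B$ satisfy $\alpha(G[A])+\alpha(G[B])\le 3$, so one side is a clique and the other has independence number at most $2$, and then builds a Hamilton path by hand through a nested case analysis on the structure of $G[B]$. Your route bypasses all of this. It is worth noting that the paper itself uses essentially your method later, in the $(P_3+K_1)$-free case of Theorem~\ref{Thm: Small Forest Free}, where it cites the fact that vertex-critical graphs have $\kappa\ge 2$ and applies Theorem~\ref{Thm: Chvatel-Erdos} with $\alpha\le 2$; your argument simply pushes the same idea one step further to $\alpha\le 3$ via the universal-vertex trick. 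The trade-off is that the paper's proof of Lemma~\ref{Lemma: Ind = 1,2,3} is self-contained modulo Theorem~\ref{Thm: Chvatel-Erdos}, whereas yours relies on the (standard, easy) folklore that $\kappa\ge\alpha-1$ implies traceability.
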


\begin{proof}
    Let $G$ be a graph with $\alpha(G) \leq 3$. If $G$ has at most $2$ vertices the result is trivial.
    Suppose $G$ is a smallest graph where $\alpha(G) \leq 3$ and $\chi(G) > r(G)$.
    Then $G$ has at least $3$ vertices, $G$ is connected, and $G$ does not contain a Hamilton path.

    If $\kappa(G)\geq 3$, then Theorem~\ref{Thm: Chvatel-Erdos} implies $G$ is Hamiltonian, implying $\chi(G) = r(G)$.
    Otherwise, $1\leq \kappa(G)\leq 2$ since $G$ is connected.
    
    Let $S$ be a vertex-cut of order $\kappa(G)$ in $G$.
    Then the vertices of $G$ can be partitioned into non-empty sets $A,B,S$ where $E(A,B) = \emptyset$
    such that each vertex $v \in S$ has neighbours in $A$ and $B$.
    Since $E(A,B) = \emptyset$, it must be the case that $\alpha(G[A])+ \alpha(G[B]) \leq \alpha(G)\leq 3$.
    As $A$ and $B$ are both non-empty, we have $\alpha(G[A]), \alpha(G[B])\geq 1$.
    So without loss of generality suppose that $\alpha(G[A]) = 1$ and $\alpha(G[B]) \leq 2$.

\vspace{0.25cm}
\noindent\underline{Claim:} If $G[B]$ is connected, then there exists a vertex $w \in B \cap N(S)$ and a Hamilton path $P$ in $G[B]$ where $w$ is an endpoint of $P$.
\vspace{0.25cm}   

Suppose $G[B]$ is connected. 
If $\kappa(G[B]) \geq 2$, then Theorem~\ref{Thm: Chvatel-Erdos} implies there is a Hamilton cycle in $G[B]$.
This implies the claim immediately.

Otherwise,  $\kappa(G[B]) = 1$.
In this case there exists a cut vertex $x$ in $G[B]$.
Let $B_1,B_2$ be the connected components of $G[B]-x$.
Then, $\alpha(B_1)+\alpha(B_2) \leq \alpha(G[B])\leq 2$.
Hence, $B_1$ and $B_2$ induce cliques.

If $B_1 \cap N(S) = \emptyset$, then $G$ is the $1$-clique sum of $G[V(G)\setminus B_1]$ and $G[B_1\cup \{x\}]$ at $x$.
Trivially, this contradicts $G$ being a smallest counter-example.
Suppose then that $B_1\cap N(S) \neq \emptyset$.
By the same argument if $N(S) \cap B_1 = N(x) \cap B_1 = \{y\}$
and $|B_1| > 1$,
then $G$ is the $1$-clique sum of $G[(V(G)\setminus B_1) \cup \{y\}]$ and $G[B_1]$ at $y$.
This also contradicts $G$ being a smallest counter-example.
Suppose then that either $N(S) \cap B_1 = N(x) \cap B_1 = \{y\}$ or $|B_1|>1$ is false.

If $|B_1| = 1$ then the vertex $y \in B_1$ must have a neighbour in $S$.
Let $w = y$.
Since $B_2$ induces a clique, there is a path $P'$ which visits exactly the vertices of $B_2$, with an endpoint that is a neighbour of $x$.
Letting $P'$ be such a path, the path $P = P',x,w$ satisfies all requirements.

Suppose then that $|B_1|>1$.
Then $N(S) \cap B_1 = N(x) \cap B_1 = \{y\}$ is false.
Hence, there exists a vertex $w \in B_1 \cap N(S)$ such that $w$ is not the unique neighbour of $x$ in $B_1$.
Let $w$ be such a vertex and let $y$ be a neighbour of $x$ in $B_1$ distinct from $w$.
Since $B_1$ and $B_2$ induce cliques, there are paths $P_1,P_2$ 
such that $P_2$  visits exactly the vertices of $B_2$, with an endpoint that is a neighbour of $x$,
and $P_1$ visits exactly the vertices of $B_1$ and the endpoints of $P_1$ are $w$ and $y$.
Letting $P_1,P_2$ be such paths, the path $P = P_1,x,P_2$ satisfies all requirements.
\hfill $\diamond$
\vspace{0.25cm}

    If $G[B]$ is connected, then let $w$ and $P$ be a vertex and path as described in the claim.
    Since, $\alpha(G[A])$ induces a clique, and since $S$ is a vertex-cut of smallest size
    it is trivial to verify that $P$ can be extended to a Hamilton path.

    Suppose then that $G[B]$ is disconnected.
    Then $G[B]$ has two components $B_1$ and $B_2$ both of which induce cliques, since 
    $\alpha(B_1)+\alpha(B_2) \leq \alpha(G[B])\leq 2$.
    If $|S| = \kappa(G) = 2$ it is trivial that $G$ has a Hamilton path.
    Otherwise, $|S| = \kappa(G) = 1$. In this case let $S = \{v\}$.
    Then, $G$ is the $1$-sum of $G[A\cup\{v\}]$, $G[B_1\cup\{v\}]$, and  $G[B_2\cup\{v\}]$ all at $v$.
    This contradicts the fact $G$ is a smallest counter-example.

    Therefore, we have demonstrated that $G$ being a smallest counter-example leads to a contradiction.
    This completes the proof.
\end{proof}

\begin{lemma}\label{Lemma: Bounded Ind}
    If $G$ is a graph and $t\geq 4$ an integer such that $\alpha(G) < t$, then 
    $$
    \chi(G) \leq r(G) + \binom{t-1}{2} - 3.
    $$
\end{lemma}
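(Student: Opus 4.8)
The plan is to induct on $t$, using Lemma~\ref{Lemma: Ind = 1,2,3} as the base case. When $t = 4$ we have $\alpha(G) \leq 3$, so $G$ is path-perfect by Lemma~\ref{Lemma: Ind = 1,2,3}, and $\binom{3}{2} - 3 = 0$, giving $\chi(G) \leq r(G)$, which is stronger than needed. So assume $t \geq 5$ and that the statement holds for $t - 1$, i.e. every graph $G'$ with $\alpha(G') < t-1$ satisfies $\chi(G') \leq r(G') + \binom{t-2}{2} - 3$.

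Now suppose $\alpha(G) \leq t - 1$, and we may assume $\alpha(G) = t-1$ exactly (otherwise apply the inductive hypothesis directly, noting $\binom{t-2}{2} \leq \binom{t-1}{2}$). The idea is to peel off one vertex of a maximum independent set and control the cost. Pick a maximum independent set $I = \{v_1, \dots, v_{t-1}\}$. First I would try the Chv\'{a}tal--Erd\H{o}s route: if $\kappa(G) \geq \alpha(G) = t-1$, then $G$ is Hamiltonian by Theorem~\ref{Thm: Chvatel-Erdos}, hence traceable, hence $\chi(G) = r(G)$ and we are done. So we may assume $\kappa(G) \leq t - 2$. Take a vertex-cut $S$ with $|S| \leq t-2$, splitting $V(G) \setminus S$ into nonempty parts $A$ and $B$ with no edges between them. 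Then $\alpha(G[A]) + \alpha(G[B]) \leq \alpha(G) = t-1$, so one side, say $A$, has $\alpha(G[A]) \leq (t-1)/2 \leq t - 2 < t-1$, and in fact both sides have independence number strictly less than $t-1$.

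The natural approach is then to colour $G[A \cup S]$ and $G[B \cup S]$ separately and glue along $S$. Both $G[A \cup S]$ and $G[B \cup S]$ have independence number at most $\alpha(G) = t - 1$; this is not immediately an improvement, so the delicate point is squeezing a gain out of the cut. One cleaner variant: proceed by minimal counterexample. Let $G$ be a vertex-minimal graph with $\alpha(G) < t$ violating the bound. Then $G$ is connected and not traceable, and by Chv\'{a}tal--Erd\H{o}s $\kappa(G) \leq t-2$. With the cut $S$ as above and $\alpha(G[A]) = a$, $\alpha(G[B]) = b$, $a + b \leq t-1$, minimality applied to $G[A]$ and $G[B]$ (each has independence number $< t$, indeed $\leq t-2$) gives colourings with $a + b + 1 - 2 = a + b - 1 \leq t - 2$... but one must instead recurse with the correct parameter: $G[A]$ has $\alpha < a + 1$, so by induction on $t$ it uses at most $r(G[A]) + \binom{a}{2} - 3 \leq r(G) + \binom{a}{2} - 3$ colours, similarly $G[B]$ uses at most $r(G) + \binom{b}{2} - 3$. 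Gluing along the cut $S$ of size $|S| \leq t - 2$ costs at most $|S| \leq t-2$ extra colours in the worst case, but a better accounting uses that $S$ together with a vertex of $A$ and the structure forces $|S| + a \leq \alpha$-type inequalities. The main obstacle, and where the $\binom{t-1}{2} - 3$ shape must come from, is verifying the arithmetic $\max(\binom{a}{2}, \binom{b}{2}) + (\text{gluing cost}) \leq \binom{t-1}{2} - 3$ given $a + b \leq t-1$ and a bound on $|S|$; the convexity of $\binom{\cdot}{2}$ means the worst case is $a = t-1, b = $ as large as possible subject to the cut existing, and one leans on $\kappa(G) \leq t-2$ together with the fact that $S$ plus an independent set in the smaller side is independent-ish to bound things. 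I expect the real work is a careful case analysis on small $t$ (or small $\kappa(G)$) plus this convexity computation, mirroring the clique-sum decomposition used in the proof of Lemma~\ref{Lemma: Ind = 1,2,3}.

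Alternatively — and this may be what the authors do — one avoids induction entirely: take a maximum independent set $I$ with $|I| = \alpha(G) \leq t-1$, and greedily build a path. The complement-of-$I$ structure and the Chv\'{a}tal--Erd\H{o}s bound let one find a long path covering most vertices; the $\binom{t-1}{2}$ term would then count the vertices that a Hamilton path in $G - (\text{something})$ fails to reach, each "missed" vertex or each "bad pair" contributing to the gap between $\chi$ and $r$. Given the $-3$ and the binomial, I lean toward the inductive clique-sum argument as the intended route, with the base case $t=4$ being exactly Lemma~\ref{Lemma: Ind = 1,2,3}. The hard part will be the bookkeeping ensuring that each reduction step increases the additive slack by exactly the right amount, $\binom{t-1}{2} - \binom{t-2}{2} = t - 2$, matching the at-most-$(t-2)$ cost of merging colourings across a cut of size $\kappa(G) \leq t-2$.
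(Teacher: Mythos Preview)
Your overall strategy---induct on $t$ with Lemma~\ref{Lemma: Ind = 1,2,3} as the base case, invoke Chv\'{a}tal--Erd\H{o}s to assume $\kappa(G)\le t-2$, take a small cut $S$, and pay $t-2$ extra colours when gluing---is exactly what the paper does, and your closing arithmetic $\binom{t-1}{2}-\binom{t-2}{2}=t-2$ is the whole point. But the middle of your write-up wanders unnecessarily and never quite lands.

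The detours you can discard: there is no need to look at $G[A\cup S]$ or $G[B\cup S]$, no need to track the exact values $a=\alpha(G[A])$ and $b=\alpha(G[B])$ separately, and no need for any convexity-of-$\binom{\cdot}{2}$ computation. The paper's argument is simply this: every connected component $G_i$ of $G-S$ has $\alpha(G_i)<\alpha(G)\le t-1$ (since one can augment a maximum independent set of $G_i$ by a vertex from another component), so $\alpha(G_i)<t-1$ and the inductive hypothesis \emph{with parameter $t-1$} gives $\chi(G_i)\le r(G_i)+\binom{t-2}{2}-3\le r(G)+\binom{t-2}{2}-3$. Use the same palette on every component, then assign each vertex of $S$ its own fresh colour; this costs at most $|S|\le t-2$ more, and $\bigl(r(G)+\binom{t-2}{2}-3\bigr)+(t-2)=r(G)+\binom{t-1}{2}-3$. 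That is the entire inductive step---no case analysis, no minimal counterexample machinery, and no bookkeeping beyond the one identity you already wrote down.
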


\begin{proof}

We proceed by induction on $t$.
Let $f_4(k) = k$
and for all $i\geq 5$ let
$$
f_i(k) = f_{i-1}(k) + (i-2).
$$
We aim to show for all integers $i\geq4$, if $G$ is a graph with $\alpha(G) < i$, then 
$
\chi(G) \leq f_i(r(G)).
$
If $i=4$, then the result follows by Lemma~\ref{Lemma: Ind = 1,2,3}.
Notice that for all $i\geq 5$,
\begin{align*}
    f_i(r(G)) & = f_4(r(G)) + \sum_{j=3}^{i-2}j\\
    & = r(G) + \binom{i-1}{2} - 3
\end{align*}
which is the upper bound on chromatic number we aim to show for graphs with $\alpha(G) < i$.

Suppose $t\geq 5$ and $\chi(H)\leq f_{t-1}(H)$ for all graphs $H$ satisfying $\alpha(H) < t-1$.
Suppose $G$ is a graph where $\alpha(G) < t$.
If $\kappa(G) \geq t-1 \geq \alpha(G)$, then Theorem~\ref{Thm: Chvatel-Erdos} implies $G$ is Hamiltonian, implying $\chi(G) = r(G) < f_t(r(G))$. 
Otherwise, $\kappa(G) \leq t-2$.
In this case $G$ contains a vertex cut $S\subseteq V(G)$ such that $|S| \leq t-2$.
Let $S$ be such a vertex cut.

Then, $G - S$ is disconnected.
Let $G_1,\dots, G_k$ be the connected components of $G$.
Trivially, for each $1\leq i \leq k$, $\alpha(G_i) < \alpha(G) \leq t-1$ and $r(G_i) \leq r(G)$.
By induction on $t$, for each $1\leq i \leq k$, 
$$
\chi(G_i) \leq f_{t-1}(r(G_i)) \leq f_{t-1}(r(G)).
$$
For each $1\leq i \leq k$ let $\phi_i$ be a $f_{t-1}(r(G))$-colouring of $G_i$.

From here we can colour $G$ by using the same $f_{t-1}(r(G))$ colours for all the vertices in components $G_1,\dots, G_k$ according to the colourings $\phi_1,\dots, \phi_k$, and then giving a new colour to each vertex in $S$.
This requires at most $f_{t-1}(r(G))+(t-2)$ colours.
Hence, 
$$
\chi(G) \leq f_{t-1}(r(G))+(t-2) = f_t(r(G))
$$
as desired. This completes the proof.
\end{proof}

We are now prepared to prove the main result of this section.

\begin{proof}[Proof of Theorem~\ref{Thm: K_1,t-free}]

We will prove that for all integers $t\geq 4$, if $G$ is a $K_{1,t}$-free graph, then 
$$
\chi(G) \leq (t-1)\Big(r(G)+\binom{t-1}{2}-3\Big).
$$
Let $t\geq 4$ be fixed but arbitrary and let $G$ be a $K_{1,t}$-free graph. 
Suppose $\chi(G) = k$, it is sufficient to prove the result when $G$ is $k$-vertex-critical.
Suppose then that $G$ is $k$-vertex-critical.

As $G$ is $k$-vertex-critical $\delta(G)\geq k-1$.
Let $v\in V(G)$ be fixed but arbitrary and let $H = G[N[v]]$.
Then $|V(H)|\geq k$ and $\alpha(H)<t$.
Trivially, $r(H) \leq r(G)$.
Hence, Lemma~\ref{Lemma: Bounded Ind} implies that
\begin{align*}
    \chi(H) \leq r(G) + \binom{t-1}{2} - 3.
\end{align*}
Furthermore, applying the standard inequality relating chromatic number, order, and \newline independence number we can observe
\begin{align*}
    \frac{k}{t-1} \leq \frac{|V(H)|}{\alpha(H)} \leq \chi(H)
\end{align*}
implies that
\begin{align*}
    \chi(G) = k \leq (t-1)\Big(r(G)+\binom{t-1}{2}-3\Big).
\end{align*}
Therefore, we have demonstrated $G$ can be coloured with the desired number of colours.
The reader can easily verify the same argument implies that $K_{1,3}$-free graphs $G$ have $\chi(G) \leq 2r(G)$.
\end{proof}

\section{Forbidding Small Induced Forests}
\label{sec: small H-free}

In this section we will prove Theorem~\ref{Thm: Small Forest Free}.
This amounts to proving that for all forests $H$ with at most $4$ vertices, other than the claw,
every $H$-free graph $G$ has $\chi(G) \leq r(G)+1$.
See Figure~\ref{fig:4-vert forests} for a list of all such graphs $H$.
Note that every forest here is a forest of paths.

\begin{figure}[h!]
    \centering
    \includegraphics[scale = 1.05]{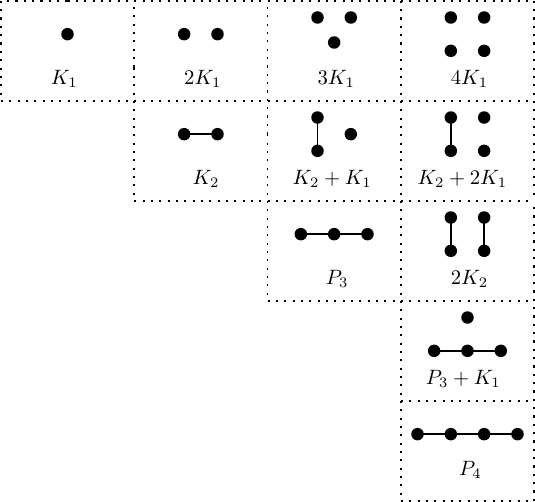}
    \caption{All forests with at most $4$ vertices excluding the $K_{1,3}$.}
    \label{fig:4-vert forests}
\end{figure}

Before proving Theorem~\ref{Thm: Small Forest Free}
we require three lemmas.
The first two lemmas are natural consequences of forbidding a small induced forest.
The third is a non-trivial structural result related to $(P_3+K_1)$-free vertex-critical graphs.

\begin{lemma}\label{Lemma: Longest paths in K_2+2K_1}
    If $G$ is a connected $(K_2+2K_1)$-free and $P$ is a longest path in $G$, then $V(G)\setminus V(P)$ is an independent set.
\end{lemma}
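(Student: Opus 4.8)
The plan is to argue by contradiction. Suppose $G$ is connected and $(K_2+2K_1)$-free, let $P = v_1v_2\cdots v_k$ be a longest path in $G$, and suppose $V(G)\setminus V(P)$ is not independent, so that there is an edge $xy$ with $x,y\notin V(P)$. First I would dispose of the degenerate cases: if $|V(G)|\le 1$ the statement is vacuous, and since $G$ is connected and contains the edge $xy$ we have $k\ge 2$, so the endpoints $v_1$ and $v_k$ of $P$ are distinct.

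The key local observation is that neither endpoint of $P$ can be adjacent to $x$ or to $y$. Indeed, if (say) $x\in N(v_1)$, then $y,x,v_1,v_2,\dots,v_k$ is a path on $k+2$ vertices — it is genuinely longer than $P$ since $x,y\notin V(P)$ and $x\ne y$ — contradicting maximality of $P$. By the obvious symmetry, $x,y\notin N(v_1)\cup N(v_k)$.

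I would then split on whether $v_1v_k$ is an edge. If $v_1v_k\in E(G)$, then $v_1v_2\cdots v_kv_1$ is a cycle $C$ with $V(C)=V(P)$; since $G$ is connected and $x\notin V(C)$, some vertex $v_i$ of $C$ has a neighbour $u$ outside $C$, and then $u,v_i,v_{i+1},\dots,v_k,v_1,\dots,v_{i-1}$ is a path on $k+1$ vertices, again contradicting maximality. (Note this sub-case only uses $V(P)\ne V(G)$.) If instead $v_1v_k\notin E(G)$, then by the previous paragraph $G[\{v_1,v_k,x,y\}]$ consists exactly of the edge $xy$ together with the two vertices $v_1,v_k$, which are nonadjacent to one another and to both $x$ and $y$; this is an induced $K_2+2K_1$, contradicting the hypothesis. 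Either way we reach a contradiction, so $V(G)\setminus V(P)$ must be independent.

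There is no deep obstacle here. The only points requiring care are checking that the walks produced in the extension step and in the cycle-rotation step are genuinely paths (all vertices distinct) and genuinely longer than $P$; and noticing that the case $v_1v_k\in E(G)$, where the forbidden subgraph does not appear directly, must instead be handled by the rotation argument. That mild case split is the main thing to get right.
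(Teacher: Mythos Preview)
Your proof is correct and follows essentially the same approach as the paper: both observe that the endpoints of $P$ cannot be adjacent to $x$ or $y$, use the forbidden $K_2+2K_1$ on $\{v_1,v_k,x,y\}$ to force $v_1v_k\in E(G)$, and then apply the standard cycle-rotation argument to contradict maximality of $P$. The only cosmetic difference is that you phrase it as a case split on whether $v_1v_k$ is an edge, whereas the paper goes directly to $v_1v_k\in E(G)$ from the forbidden subgraph.
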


\begin{proof}
    Let $G$ be a $(K_2+2K_1)$-free and $P = v_1,\dots, v_k$ is a longest path in $G$.
    If $P$ is a Hamiltonian path or a path of length $n-1$, then the results is trivial. 
    Suppose then that there exists a vertices $u,w \in V(G)\setminus V(P)$,
    and for contradiction suppose $(u,w) \in E(G)$.
    Since $P$ is a longest path, $v_1$ and $v_k$ are not adjacent to $u$ or $w$.
    
    As $G$ is $(K_2+2K_1)$-free, $G[\{u,w,v_1,v_k\}]$ cannot induce a graph isomorphic to $K_2+2K_1$.
    Hence, $(v_1,v_k) \in E(G)$.
    So the vertices of $V(P)$ are spanned by a cycle.
    Since $G$ is connected, either $u$ or $w$ is adjacent to some vertex on $P$,
    or there exits an internal vertex $z$ on a path $P'$ which connects $V(P)$ and $u$, such that no internal vertex of $P'$ is on $P$.
    But this contradicts the fact that $P$ is longest.
\end{proof}

\begin{lemma}\label{Lemma: Longest paths in 2K_2}
    If $G$ is a $2K_2$-free and $P$ is a longest path in $G$, then $V(G)\setminus V(P)$ is an independent set.
\end{lemma}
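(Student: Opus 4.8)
The plan is to mimic the proof of Lemma~\ref{Lemma: Longest paths in K_2+2K_1} but exploit the (slightly weaker) hypothesis that $G$ is $2K_2$-free. Suppose $P = v_1,\dots,v_k$ is a longest path in $G$ and, for contradiction, that there are two adjacent vertices $u,w \in V(G)\setminus V(P)$. The key observation is that since $P$ is a longest path, neither endpoint $v_1$ nor $v_k$ can be adjacent to $u$ or $w$ — otherwise $P$ could be extended. Hence $\{v_1,v_k\}$ and $\{u,w\}$ have no edges between them, and the only way to avoid an induced $2K_2$ on $\{v_1,v_k,u,w\}$ is to have $(v_1,v_k)\in E(G)$ (the edge $uw$ is present by assumption, and there are no cross edges, so if $v_1v_k$ were a non-edge we would get an induced $2K_2$).

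Now $V(P)$ is spanned by a cycle $C = v_1,v_2,\dots,v_k,v_1$. Since $G$ is connected and $u \notin V(C)$, there is a shortest path $Q$ from $u$ to $V(C)$; let $z$ be the vertex of $C$ where $Q$ first meets the cycle, and let $z'$ be the neighbour of $z$ on $Q$ (possibly $z' = u$). Routing around the cycle $C$ from $z$ and then continuing along $Q$ to $u$ produces a path on $V(P)\cup\{z',\dots\} \supsetneq V(P)$ vertices, contradicting the maximality of $P$. (Formally: delete one edge of $C$ incident to $z$ to obtain a Hamilton path of $G[V(P)]$ with $z$ as an endpoint, then append the portion of $Q$ from $z$ to $u$; this is a path strictly longer than $P$.)

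The main obstacle — and it is minor — is being careful in the last step that the path we build from the cycle together with $Q$ is genuinely longer than $P$ and uses each vertex at most once; this is immediate because $Q$ is internally disjoint from $C$ and adds at least the vertex $u$. Everything else is a direct transcription of the earlier argument, with the single change that forbidding $2K_2$ (rather than $K_2+2K_1$) suffices precisely because we already know $u$ and $w$ are non-adjacent to both endpoints of $P$, so the relevant induced subgraph on $\{v_1,v_k,u,w\}$ is exactly a $2K_2$ unless $v_1v_k$ is an edge.
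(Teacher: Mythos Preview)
Your argument contains a genuine error at the key step. You claim that on the four vertices $\{v_1,v_k,u,w\}$, if $v_1v_k$ were a non-edge then the induced subgraph would be $2K_2$. This is backwards: with the edge $uw$ present, no cross edges, and $v_1v_k$ a non-edge, the induced subgraph is $K_2+2K_1$ (one edge on four vertices), which is perfectly allowed in a $2K_2$-free graph. In fact the $2K_2$-free hypothesis forces $v_1v_k$ to be a \emph{non}-edge on this vertex set, so you cannot conclude that $V(P)$ is spanned by a cycle, and the rest of the argument collapses. You have inadvertently reproduced the proof of the $(K_2+2K_1)$-free lemma, where the deduction ``$v_1v_k\in E(G)$'' is correct precisely because $K_2+2K_1$ is what is forbidden.

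The paper's proof avoids this by choosing a different four-vertex set: it looks at $\{u,w,v_1,v_2\}$, where $v_1v_2$ is already an edge of $P$. Now there are two disjoint edges $uw$ and $v_1v_2$ with no cross edges so far, and $2K_2$-freeness forces some edge between $\{u,w\}$ and $\{v_1,v_2\}$. Since $v_1$ cannot be adjacent to $u$ or $w$ (else $P$ extends), one of $u,w$ is adjacent to $v_2$, say $u$, and then $w,u,v_2,\dots,v_k$ is strictly longer than $P$. This is both simpler and does not require any connectivity assumption or cycle argument.
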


\begin{proof}
    Let $G$ be a $2K_2$-free and $P = v_1,\dots, v_k$ is a longest path in $G$.
    If $P$ is a Hamiltonian path or a path of length $n-1$, then the results is trivial. 
    Suppose then that there exists a vertices $u,w \in V(G)\setminus V(P)$,
    and for contradiction suppose $(u,w) \in E(G)$.
    
    Since $G$ is $2K_2$-free, $G[\{u,w,v_1,v_2\}]$ cannot induce a graph isomorphic to $2K_2$.
    Hence, $E(\{u,w\}, \{v_1,v_2\})$ is non-empty.
    Since $P$ is longest, $v_1$ is not adjacent to $u$ or $w$, 
    so suppose without loss of generality that 
    $(u,v_2) \in E(G)$.
    Then $P' = w,u,v_2,v_3,\dots, v_k$ is a longer path than $P$.
    But this is a contradiction.
\end{proof}

\begin{lemma}[Theorem~3.1 \cite{cameron2022dichotomizing}]
\label{Lemma: P_3+K_1 vertex-critical}
    If $G$ is $(P_3+K_1)$-free and $G$ is vertex-critical, then $\alpha(G)\leq 2$.
\end{lemma}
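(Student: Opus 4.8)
The plan is to prove the contrapositive in the form: every $(P_3+K_1)$-free graph $G$ with $\alpha(G)\ge 3$ fails to be vertex-critical, i.e.\ there is a vertex $v$ with $\chi(G-v)=\chi(G)$. I would first dispose of the disconnected case. If $G$ is disconnected and some component contains an induced $P_3$, then any vertex in a different component together with that $P_3$ induces a $P_3+K_1$; hence every component is $P_3$-free and therefore, being connected, a clique, so $G$ is a disjoint union of cliques. A disjoint union of two or more cliques always has a vertex whose deletion leaves the chromatic number unchanged (delete a vertex from a clique that is not the unique largest one), so such a $G$ is not vertex-critical unless it is a single clique, where $\alpha=1$. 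Thus I may assume $G$ is connected.

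The heart of the argument is a structural description of connected $(P_3+K_1)$-free graphs with $\alpha(G)\ge 3$. The driving observation is that $(P_3+K_1)$-freeness is equivalent to the statement that every induced $P_3$ is a dominating set of $G$: any vertex non-adjacent to all three vertices of an induced $P_3$ would complete a $P_3+K_1$. Fixing an independent triple $\{x,y,z\}$, I would first show that they have a common neighbour. If two of them, say $x,y$, are at distance $2$ via a vertex $p$, then the induced $P_3$ $x,p,y$ dominates $z$, and since $z\not\sim x,y$ we get $z\sim p$, so $p$ is adjacent to all three. To rule out the remaining case in which all pairwise distances are at least $3$, I would take a shortest $x$–$y$ path $x,u_1,u_2,\dots$: domination of $z$ by $\{x,u_1,u_2\}$ forces $z\sim u_2$ (as $z\not\sim x$ and $z\not\sim u_1$, the latter to avoid a distance-$2$ common neighbour of $x,z$), and then domination of $y$ by the induced $P_3$ $\{u_1,u_2,z\}$ forces $y\sim u_2$, so $u_2$ is a common neighbour of $y$ and $z$, contradicting their distance being at least $3$.

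With a common neighbour in hand, I would promote this to a full join decomposition $G=M\bowtie D$, where $D$ induces a disjoint union of at least two cliques and $M$ is completely joined to $D$. The clean way to see this is via the complement: one checks that $\overline{P_3+K_1}$ is the paw, so $G$ is $(P_3+K_1)$-free exactly when $\overline G$ is paw-free, and Olariu's structure theorem for paw-free graphs says every component of $\overline G$ is triangle-free or complete multipartite. Since $\alpha(G)=\omega(\overline G)\ge 3$, some component of $\overline G$ is complete multipartite with at least three parts; complementing turns that component into a disjoint union of at least three cliques joined to the remainder of $G$, which is exactly the desired decomposition. Given $G=M\bowtie D$ with $D=Q_1\cup\cdots\cup Q_m$ and $m\ge 2$, we have $\chi(G)=\chi(M)+\max_i|Q_i|$, and deleting a vertex from a clique that is not the unique largest one leaves this value unchanged; hence $G$ is not vertex-critical, completing the contrapositive.

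The main obstacle is the step from ``independent triples have a common neighbour'' to the global join decomposition. Invoking Olariu's paw-free characterization makes this painless but relies on an external result; a fully self-contained treatment must instead upgrade the local domination facts into a genuine partition of $V(G)$ into pairwise non-adjacent cliques that are uniformly joined to the remainder, which requires care in showing that the non-adjacency relation among the independent vertices behaves like an equivalence after collapsing cliques. I expect that bookkeeping, rather than the chromatic conclusion, which is immediate once the decomposition is available, to be where the real work lies.
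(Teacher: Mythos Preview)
The paper does not give its own proof of this lemma; it is quoted verbatim as Theorem~3.1 of \cite{cameron2022dichotomizing} and used as a black box. So there is no in-paper argument to compare against.

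That said, your proposal is a correct proof. The cleanest route you sketch, passing to the complement and invoking Olariu's paw-free structure theorem, goes through exactly as you describe: $\overline{P_3+K_1}$ is indeed the paw, $\alpha(G)\ge 3$ forces some component of $\overline{G}$ to be complete multipartite with at least three parts, and complementing gives the join $G=M\bowtie D$ with $D$ a disjoint union of at least three cliques, from which non-criticality is immediate via $\chi(G)=\chi(M)+\max_i|Q_i|$. Your direct argument that every independent triple in a connected $(P_3+K_1)$-free graph has a common neighbour is also correct, and you are right that the only real work in a self-contained treatment is upgrading this local fact to the global join decomposition; the Olariu shortcut is the standard way to avoid that bookkeeping. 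One small point: your phrasing ``delete a vertex from a clique that is not the unique largest one'' should be read to cover the case where several $Q_i$ tie for largest (delete from any one of them), but the intent is clear and the conclusion holds.
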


We are now prepared to prove the main result of this section.

\begin{proof}[Proof of Theorem~\ref{Thm: Small Forest Free}]

Let $H$ be a fixed forest with at most $4$ vertices other than $K_{1,3}$.
If $H \in \{K_1,K_2,2K_1, P_3, K_2+K_1, P_4\}$, then every $H$-free graph is perfect.
Hence, for such an $H$, every $H$-free graph $G$ is path-perfect.
If $H \in \{3K_1, 4K_1\}$, then Lemma~\ref{Lemma: Ind = 1,2,3} implies every $H$-free graph is path-perfect.
Otherwise, $H \in \{K_2+2K_1, 2K_2, P_3+K_1\}$.

Consider if $H =  P_3+K_1$. 
As noted above, it suffices to show that every vertex-critical $(P_3+K_1)$-free graph is traceable. 
Since no vertex-critical graph $G$ can have $\kappa(G)=1$ (see, for example \cite[Lemma 2.3]{Dhaliwal2017}), it follows from Theorem~\ref{Thm: Chvatel-Erdos} and Lemma~\ref{Lemma: P_3+K_1 vertex-critical} that  every vertex-critical $(P_3+K_1)$-free graph is Hamiltonian and therefore traceable.

Finally, let $H \in \{K_2+2K_1, 2K_2\}$.
Trivially, it is sufficient to prove the result for connected graphs.
Let $G$ be a connected $H$-free graph and let $P$ be a longest path in $G$.
Then, Lemma~\ref{Lemma: Longest paths in K_2+2K_1} or Lemma~\ref{Lemma: Longest paths in 2K_2} implies that
$V(G)\setminus V(P)$ is an independent set.
By the definition of $r(G)$, $\chi(G[V(P)]) \leq r(G)$.
Trivially, we can extend an $r(G)$-colouring of $G[V(P)]$ to an $r(G)+1$ colouring of $G$ since $V(G)\setminus V(P)$ is an independent set.
Thus, $\chi(G) \leq r(G)+1$ as required.
This concludes the proof.
\end{proof}

\section{Constructing $G^{(r)}_k$}
\label{sec: construction}

In this section 
we construct graphs whose paths induced subgraphs have small chromatic number, but the graph itself has large chromatic number.
In particular,
for all $r\geq 6$
we demonstrate a sequence of graphs $\{G^{(r)}_k\}_{0\leq k \leq \lfloor\frac{r}{2}\rfloor -1}$ 
such that for all valid choices of $r$ and $k$,
$$r(G^{(r)}_k) = r \hspace{1cm} \text{ and} \hspace{1cm} \chi(G^{(r)}_k) = r+k.$$
We prove that the graphs we construct satisfy these claims in later sections.
Notice that by taking $k = \lfloor \frac{r}{2} \rfloor -1$, these properties imply Theorem~\ref{Thm: Lower result}.

This construction is broken into distinct stages to assist the reader.
These stages serve the additional purposes of simplifying later sections of the paper.

\begin{figure}[ht!]
    \centering
\scalebox{1.5}{
\includegraphics[width=0.5\linewidth]{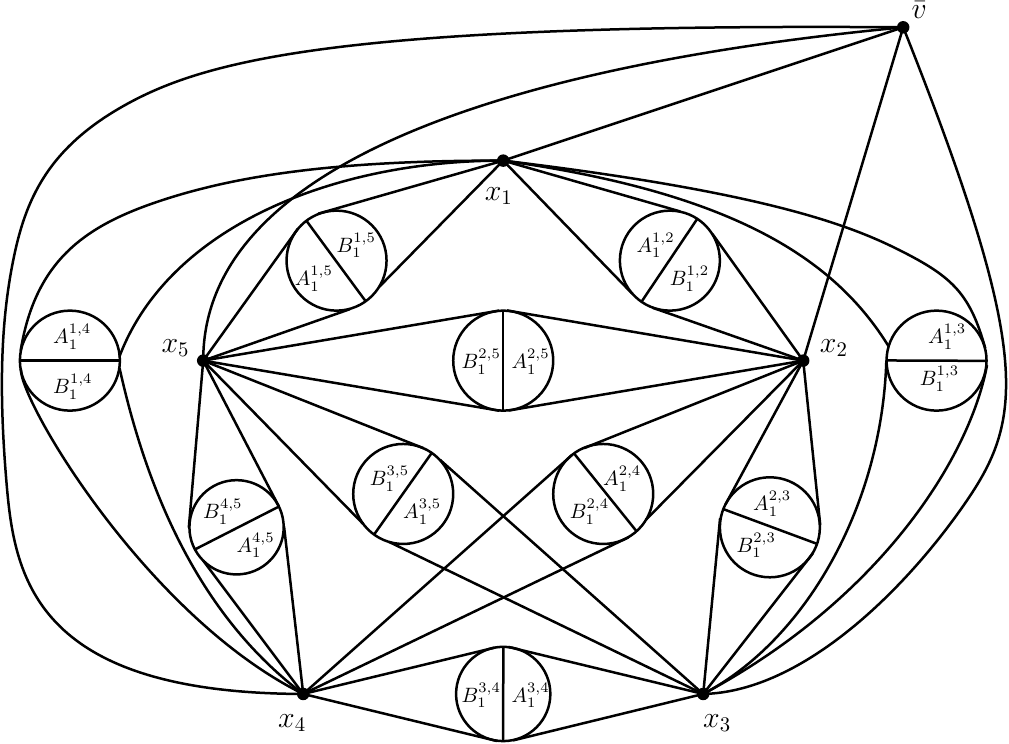}
}
    \caption{The graph $H_1^{(6)}$. Here, $r=6$ and $k=0$, hence there are $5$ copies of $G^{(6)}_0$.}
    \label{fig: Stage1}
\end{figure}

Let $r \geq 6$ be a fixed but arbitrary integer
and let $G^{(r)}_0$ be $K_r$ with vertices labeled $v_1,\dots, v_{r}$.
Let $A^{(r)}_0 = \{v_1,\dots, v_{\lfloor \frac{r}{2}\rfloor}\}$ and $B^{(r)}_0 = \{v_{\lfloor \frac{r}{2}\rfloor+1},\dots, v_{r} \}$.
Given $G^{(r)}_k$, $A^{(r)}_k$, and $B^{(r)}_k$ for $k < \lfloor\frac{r}{2}\rfloor -1$ we define 
$G^{(r)}_{k+1}$, $A^{(r)}_{k+1}$, and $B^{(r)}_{k+1}$ as follows.

\begin{figure}[ht!]
    \centering
\scalebox{1.25}{
\includegraphics[width=0.5\linewidth]{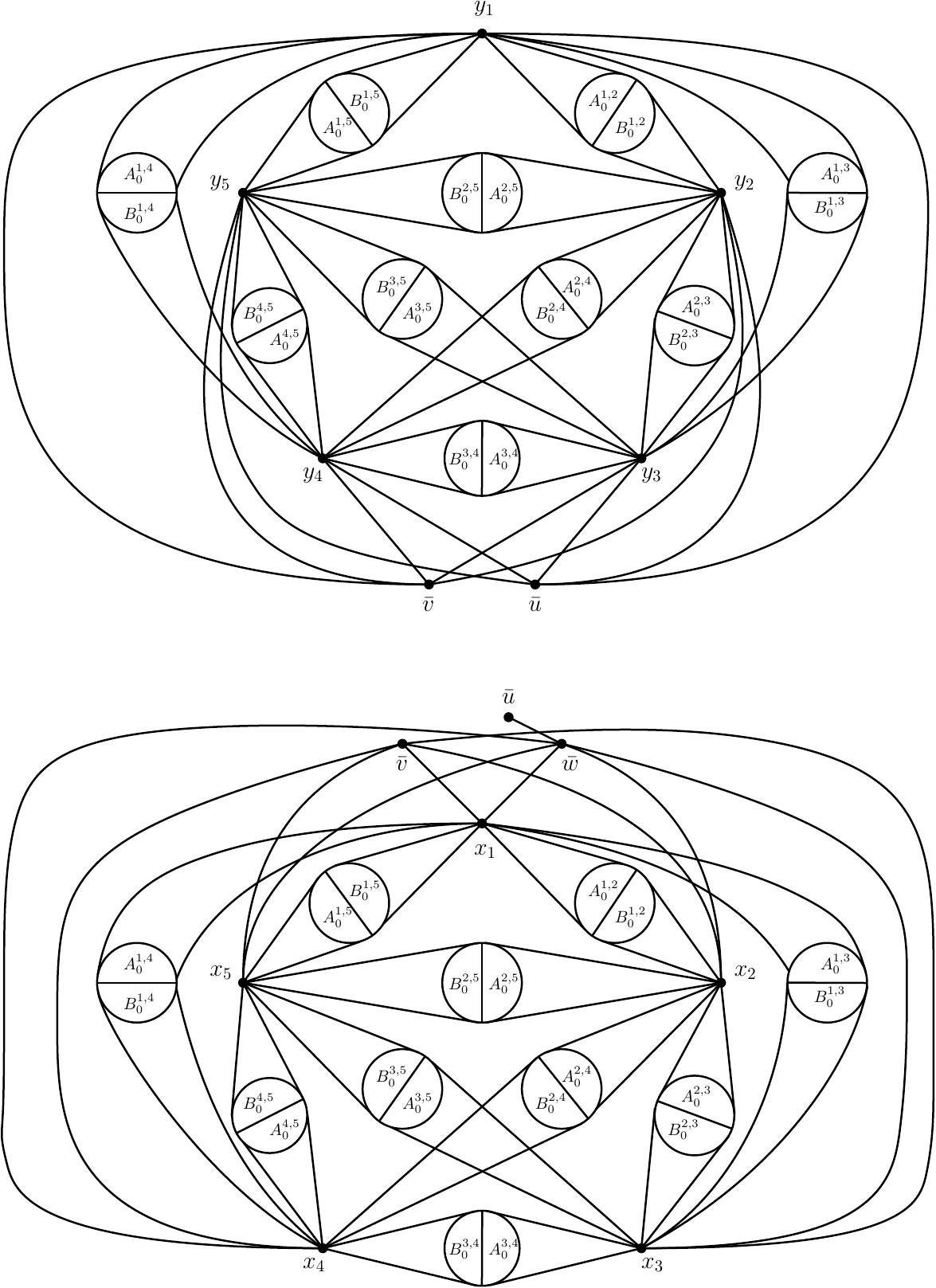}
}
    \caption{The graph $H_1^{(6),+}$ (bottom) and the graph $H_1^{(6),-}$ (top).}
    \label{fig: Stage2}
\end{figure}

The first stage, see Figure~\ref{fig: Stage1}, involves defining a graph $H^{(r)}_{k+1}$ as follows:
Let $G_k^{i,j}$ be copies of $G_k^{(r)}$ indexed by all pairs $1\leq i< j\leq r+k-1$.
It is sometimes convenient to write $G_k^{j,i}$ when $i<j$, 
in this case take $G_k^{j,i}$ to be $G_k^{i,j}$.
For each pair $i<j$ let $A_k^{i,j}$ be the set $A^{(r)}_k$ in $G_k^{i,j}$.
Similarly, for each pair $i<j$ let $B_k^{i,j}$ be the set $B^{(r)}_k$ in $G_k^{i,j}$.
The vertices of $H^{(r)}_{k+1}$ are defined by 
\begin{align*}
    V(H^{(r)}_{k+1}) = \{\bar{v}\} \cup \{x_1,\dots, x_{r+k-1}\} \cup \Bigg(\bigcup_{1\leq i< j\leq k} V(G^{i,j}_{k}) \Bigg).
\end{align*}
For each pair $i<j$, $V(G^{i,j}_{k})$ induces $G_k^{i,j}$, while $\{x_1,\dots, x_{r+k-1}\}$ is an independent set,
and $N(\bar{v}) = \{x_1,\dots, x_{r+k-1}\}$.

All remaining edges are described as follows: 
if $i = 1$ and $j = r+k-1$, then $x_i$ is adjacent to every vertex in $B_k^{i,j}$, while $x_j$ is adjacent to every vertex $A_k^{i,j}$.
For all other pairs $i < j$, $x_i$ is adjacent to every vertex in $A^{i,j}_k$,
while $x_j$ is adjacent to all vertices in $B_k^{i,j}$. 
Here $x_i$ and $x_j$ have no common neighbours in $G_k^{i,j}$, and each vertex $x_q$, where $q\neq i,j$, has no neighbours in $G_k^{i,j}$.

\begin{figure}[h!]
    \centering
\scalebox{1.25}{
\includegraphics[width=0.5\linewidth]{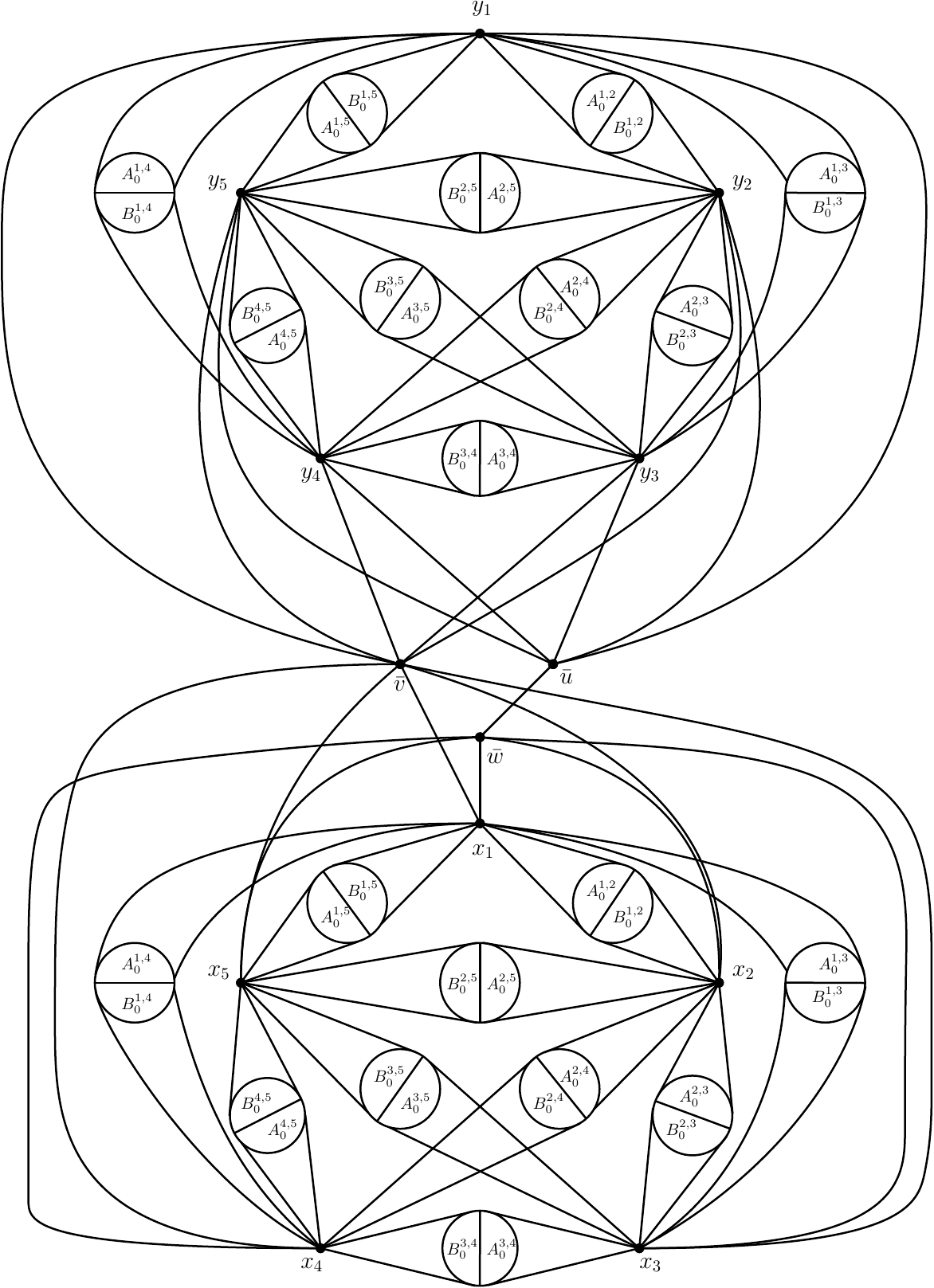}
}
    \caption{The graph $G^{(6)}_1$.}
    \label{fig: Stage3}
\end{figure}

We are now prepared to consider the second stage of our construction. 
Given $H^{(r)}_{k+1}$, we construct two graphs we call $H^{(r),+}_{k+1}$ and $H^{(r),-}_{k+1}$.
To form $H^{(r),+}_{k+1}$ we add two vertices, $\bar{u},\bar{w}$ to $H^{(r)}_{k+1}$.
We let $N(\bar{w}) = N_{H^{(r)}_{k+1}}(\bar{v}) \cup \{\bar{u}\}$, and we let $N(\bar{u}) = \{\bar{w}\}$.
Edges are not added between the vertices of $H^{(r)}_{k+1}$.
Similarly, to form 
$H^{(r),-}_{k+1}$ we add one vertex, $\bar{u}$, to $H^{(r)}_{k+1}$.
We let $N(\bar{u}) = N_{H^{(r)}_{k+1}}(\bar{v})$.
Edges are not added between the vertices of $H^{(r)}_{k+1}$.

The third and final stage of the construction involves identifying vertices in 
$H_{k+1}^{(r),+}$ and $H_{k+1}^{(r),-}$
to form $G^{(r)}_{k+1}$.
Specifically, vertex $\bar{v}$ from $H_{k+1}^{(r),+}$ and vertex $\bar{v}$ from $H_{k+1}^{(r),-}$
are identified,
and vertex $\bar{u}$ from $H_{k+1}^{(r),+}$ and vertex $\bar{u}$ from $H_{k+1}^{(r),-}$,
are identified.
The graph given by identifying these two pairs of vertices is $G^{(r)}_{k+1}$.
To make proofs easier to follow, 
we relabel the vertices $x_1,\dots, x_{r+k-1}$ in $H_{k+1}^{(r),-}$
to $y_1,\dots, y_{r+k-1}$.
See Figure~\ref{fig: Stage1} for a drawing of $G^{(6)}_{1}$.

To complete the construction, we define $A^{(r)}_{k+1}$ and $B^{(r)}_{k+1}$.
To delineate the vertex sets $A^{i,j}_k$, $B^{i,j}_k$ in $H_{k+1}^{(r),+}$ and $H_{k+1}^{(r),-}$
we denote the copies in $H_{k+1}^{(r),+}$ as $A^{i,j}_{k,+}$, $B^{i,j}_{k,+}$,
and we denote the copies in $H_{k+1}^{(r),-}$ as $A^{i,j}_{k,-}$, $B^{i,j}_{k,-}$.
Given this notation let 
$$
A^{(r)}_{k+1} = \{\bar{u},\bar{v}\} \cup \Big(\bigcup_{1\leq i < j \leq r+k-1} A^{i,j}_{k,+} \Big) \cup \Big(\bigcup_{1\leq i < j \leq r+k-1} A^{i,j}_{k,-} \Big).
$$
From here let $B^{(r)}_{k+1} = V(G^{(r)}_{k+1})\setminus A^{(r)}_{k+1}$, that is
$$ 
B^{(r)}_{k+1} = \{x_1,\dots, x_{r+k-1}\}\cup\{\bar{w}\} \cup \Big(\bigcup_{1\leq i < j \leq r+k-1} B^{i,j}_{k,+} \Big) \cup \Big(\bigcup_{1\leq i < j \leq r+k-1} B^{i,j}_{k,-} \Big).
$$

\section{Colouring $G^{(r)}_k$}
\label{sec: global colour}

In this section, we establish $\chi(G^{(r)}_k) = r+k$, as was claimed in Section~\ref{sec: construction}.
Taking $k = \lfloor \frac{r}{2} \rfloor -1$ provides half the proof of Theorem~\ref{Thm: Lower result}.
The other half of the proof, that $r(G^{(r)}_k) = r$ when $k = \lfloor \frac{r}{2} \rfloor -1$, is proven in Section~\ref{sec: path-colour}.

We begin by proving that the graph 
$H^{(r)}_{k+1}$, defined in Section~\ref{sec: construction}, satisfies certain colouring conditions,
provided some technical assumptions regarding $G^{(r)}_k$.
We will go on to show that these colouring properties of $H^{(r)}_{k+1}$
imply that $G^{(r)}_{k+1}$ satisfies the same technical assumptions as $G^{(r)}_k$.

\begin{lemma}\label{Lemma: Colour H_k+1}
    If $\chi(G^{(r)}_{k}) = r + k$ 
    and $G^{(r)}_{k}$ admits a $(r+k)$-colouring $\phi: V(G^{(r)}_{k}) \rightarrow [r+k]$ such that
    $\phi^{-1}(1) \subseteq A^{(r)}_{k}$ and $\phi^{-1}(2) \subseteq B^{(r)}_{k}$,
    then $\chi(H^{(r)}_{k+1}) = r + k$.
    Furthermore, for all $(r+k)$-colourings $\psi:V(H^{(r)}_{k+1}) \rightarrow [r+k]$ of $H^{(r)}_{k+1}$,
    $\psi(x_i) \neq \psi(x_j)$ for all pairs $1\leq i < j \leq r+k-1$.
\end{lemma}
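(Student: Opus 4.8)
The plan is to establish the three assertions — $\chi(H^{(r)}_{k+1})\le r+k$, $\chi(H^{(r)}_{k+1})\ge r+k$, and the rigidity of any $(r+k)$-colouring on $x_1,\dots,x_{r+k-1}$ — more or less independently, using only the two facts supplied by the hypothesis: $\chi(G^{(r)}_{k})=r+k$, and the existence of a colouring $\phi$ whose reserved classes satisfy $\phi^{-1}(1)\subseteq A^{(r)}_k$ and $\phi^{-1}(2)\subseteq B^{(r)}_k$. The lower bound is immediate: since $r\ge 6$ there is at least one pair $1\le i<j\le r+k-1$, so $H^{(r)}_{k+1}$ contains an induced copy $G_k^{i,j}$ of $G^{(r)}_k$, whence $\chi(H^{(r)}_{k+1})\ge\chi(G^{(r)}_k)=r+k$.

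For the upper bound I would write down an explicit $(r+k)$-colouring. Give $x_i$ the colour $i$ for each $i\in[r+k-1]$ and give $\bar v$ the colour $r+k$; this is proper on $\{\bar v,x_1,\dots,x_{r+k-1}\}$, which induces a star, and all colours lie in $[r+k]$ because $i\le r+k-1$. It then remains to colour each copy $G_k^{i,j}$ by a relabelling $\sigma\circ\phi$ of $\phi$, where $\sigma$ is a permutation of $[r+k]$ chosen so that the two pendant $x$-vertices attached to that copy create no conflict. The point is that colour $1$ never appears on $B^{(r)}_k$ under $\phi$ and colour $2$ never appears on $A^{(r)}_k$ under $\phi$; hence for a generic pair $i<j$ (where $x_i$ is joined to $A_k^{i,j}$ and $x_j$ to $B_k^{i,j}$) one takes any $\sigma$ with $\sigma(2)=i$ and $\sigma(1)=j$, so $A_k^{i,j}$ avoids colour $i$ and $B_k^{i,j}$ avoids colour $j$; for the exceptional pair $(1,r+k-1)$, where the attachments are interchanged, one takes $\sigma$ with $\sigma(1)=1$ and $\sigma(2)=r+k-1$. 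Such permutations exist because $i\ne j$ and $r+k\ge 3$, and the resulting colourings patch together into a proper colouring of $H^{(r)}_{k+1}$ since distinct copies $G_k^{i,j}$ are pairwise non-adjacent and meet the rest of the graph only through $x_i$ and $x_j$, while $\bar v$ has no neighbour inside any copy.

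For the rigidity statement, let $\psi$ be an $(r+k)$-colouring of $H^{(r)}_{k+1}$ and suppose, for contradiction, that $\psi(x_i)=\psi(x_j)=c$ for some $i<j$. The restriction of $\psi$ to the induced copy $G_k^{i,j}$ is a proper colouring of a graph of chromatic number $r+k$ using at most $r+k$ colours, so it uses every colour of $[r+k]$; in particular colour $c$ occurs on some vertex of $V(G_k^{i,j})=A_k^{i,j}\cup B_k^{i,j}$. But one of $x_i,x_j$ is adjacent to every vertex of $A_k^{i,j}$ and the other to every vertex of $B_k^{i,j}$ (in one order or the other, depending on whether $(i,j)$ is the exceptional pair), so no vertex of $G_k^{i,j}$ may carry colour $c$ — a contradiction. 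Hence $\psi(x_i)\ne\psi(x_j)$ for all $1\le i<j\le r+k-1$.

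I expect the only genuinely fiddly part to be the bookkeeping in the upper bound: one must align the two "frozen" colour classes of $\phi$ with the two pendant $x$-vertices of each copy in the correct orientation, and keep track of the swapped roles in the single exceptional pair $(1,r+k-1)$. The rigidity half is then just the short observation that an $(r+k)$-colouring must realise all $r+k$ colours inside each copy, something $x_i$ and $x_j$ would jointly rule out if they agreed; it also fits the colouring constructed above, where the $x_i$ indeed receive pairwise distinct colours.
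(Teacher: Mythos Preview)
Your proposal is correct and follows essentially the same approach as the paper: the same explicit colouring ($\psi(x_i)=i$, $\psi(\bar v)=r+k$, and a permutation $\sigma\circ\phi$ on each copy with $\sigma(2)=i$, $\sigma(1)=j$, swapped for the exceptional pair), the same lower bound via the embedded copy of $G^{(r)}_k$, and the same rigidity argument (yours phrased as ``all colours must appear in $G_k^{i,j}$'' rather than the paper's equivalent contrapositive ``$\psi$ restricted would be an $(r+k-1)$-colouring'').
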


\begin{proof}
    Suppose $\chi(G^{(r)}_{k}) = r + k$ 
    and $G^{(r)}_{k}$ admits a $(r+k)$-colouring $\phi: V(G^{(r)}_{k}) \rightarrow [r+k]$ such that
    $\phi^{-1}(1) \subseteq A^{(r)}_{k}$ and $\phi^{-1}(2) \subseteq B^{(r)}_{k}$.
    As $H^{(r)}_{k+1}$ contains a subgraph isomorphic to $G^{(r)}_{k}$
    $\chi(H^{(r)}_{k+1}) \geq  \chi(G^{(r)}_{k}) = r+k$.
    Hence, to show $\chi(H^{(r)}_{k+1}) = r+k$ it is sufficient to prove $H^{(r)}_{k+1}$ is $(r+k)$-colourable.
    We proceed by constructing an $(r+k)$-colouring of $H^{(r)}_{k+1}$, then we prove that in all $(r+k)$-colourings of  $H^{(r)}_{k+1}$,
    every vertex in the set $\{x_1,\dots, x_{r+k-1}\}$ receives a different colour.

    Let $\psi:V(H^{(r)}_{k+1}) \rightarrow [r+k]$ be defined as follows.
    For all $x_i \in \{x_1,\dots, x_{r+k-1}\}$ let $\psi(x_i) = i$, and let $\psi(\bar{v}) = r+k$.
    For each pair $i,j$ with $1\leq i < j \leq r+k-1$ we will now describe how $\psi$ colours vertices in $G^{i,j}_k$.    
    By assumption there exists an $(r+k)$-colouring $\phi: V(G^{(r)}_{k}) \rightarrow [r+k]$ such that
    $\phi^{-1}(1) \subseteq A^{(r)}_{k}$ and $\phi^{-1}(2) \subseteq B^{(r)}_{k}$.
    Then there exists such an $(r+k)$-colouring $\phi$ of  $G^{i,j}_k$.
    Let $\phi$ be such an $(r+k)$-colouring of $G^{i,j}_k$
    and let $\sigma$ be a permutation of the integers $1,\dots, r+k$ such that $\sigma(1) = j$ and $\sigma(2) = i$ in all cases except if $i=1$ and $j=r+k-1$, where we let $\sigma(2) = r+k-1$ and $\sigma(1) = 1$.
    Let $\phi_{i,j}$ be the $(r+k)$-colouring of $G^{i,j}_k$ given by permuting the colour classes of $\phi$ in  $G^{i,j}_k$ by $\sigma$.
    Then, for all $u \in V(G^{i,j}_k)$ let $\psi(u) = \phi_{i,j}(u)$. 

    We claim $\psi$ is a proper colouring.
     For the sake of contradiction, suppose $\psi$ is not a proper colouring.
    Then there exists adjacent vertices $u,v$ such that $\psi(u) = \psi(v)$.
    Let $u,v$ be such a pair.
    Since every vertex in the set $S = \{\bar{v}, x_1, \dots, x_{r+k-1}\}$ receives a different colour, at most one vertex of the pair $u,v$ is in $S$.
    Furthermore, since $N(\bar{v}) \subseteq S$, neither $u$ nor $v$ is equal to $\bar{v}$.
    Without loss of generality suppose that $u$ is not in $S$. 
    Then there exists a pair $i<j$ such that $u \in V(G^{i,j}_k)$.
    If $u \in V(G^{i,j}_k)$, then every neighbour of $u$ is in $V(G^{i,j}_k) \cup \{x_{i},x_{j}\}$ implying $v \in V(G^{i,j}_k) \cup \{x_{i},x_{j}\}$.
    We cannot have $u,v \in V(G^{i,j}_k)$ since $\psi(u) = \phi_{i,j}(u)$, and $\psi(v) = \phi_{i,j}(v)$, while $\phi_{i,j}$ is a proper colouring of $G^{i,j}_k$.
    
    Suppose then that $v \in \{x_{i},x_{j}\}$.
    First, consider the case where 
    $\{i,j\} \neq \{1,r+k-1\}.$
    Then $v = x_{i}$ if $u \in A^{i,j}_k$ and $v = x_j$ otherwise.
    Again this is a contradiction, 
    since we assume $\psi(u) = \psi(v)$, while if $v = x_i$ and $u \in A^{i,j}_k$, 
    then $\psi(u) \neq i = \psi(x_i)$ given $\psi(u) = \phi_{i,j}(u)$ and $\phi_{i,j}^{-1}(i) = \phi^{-1}(2) \subseteq B^{i,j}_k$. 
    Similarly if $v = x_j$ and $u \in B^{i,j}_k$, 
    then $\psi(u) \neq j = \psi(x_j)$ given $\psi(u) = \phi_{i,j}(u)$ and $\phi_{i,j}^{-1}(j) = \phi^{-1}(1) \subseteq A^{i,j}_k$.
    Thus, $\chi(H^{(r)}_{k+1}) = r+k$ as claimed.
    
    Now, if $i= 1$ and $j=r+k-1$, then the result follows very similarly, except that now by definition, we have that $v = x_{r+k-1}$ if $u \in A^{1,r+k-1}_k$ and $v = x_{1}$ otherwise. 
    Thus, if $v = x_{r+k-1}$ and $u \in A^{1,r+k-1}_k$, 
    then $\psi(u) \neq r+k-1 = \psi(x_{r+k-1})$ given that $\psi(u) = \phi_{1,r+k-1}(u)$ and $\phi_{1,r+k-1}^{-1}(r+k-1) = \phi^{-1}(2) \subseteq B^{1,r+k-1}_k$. 
    Further, if $v = x_{1}$ and $u \in B^{1,r+k-1}_k$, 
    then $\psi(u) \neq 1 = \psi(x_{1})$ given that $\psi(u) = \phi_{1,r+k-1}(u)$ and $\phi_{1,r+k-1}^{-1}(1) = \phi^{-1}(1) \subseteq A^{1,r+k-1}_k$.
    Therefore, $\chi(H_{k}^{(r)}) = r+k$.

    We now prove that if $\psi$ is an $(r+k)$-colouring of $H^{(r)}_{k+1}$, 
    then every vertex in the set $\{x_1,\dots, x_{r+k-1}\}$ receives a different colour.
    Suppose for the sake of contradiction that $\psi$ is an $(r+k)$-colouring of $H^{(r)}_{k+1}$
    such that there exist distinct $i$ and $j$ where $\psi(x_i) = \psi(x_j)$.
    Let $\psi$ be such a colouring and without loss of generality suppose $\psi(x_i) = \psi(x_j) = r+k$.
    By the definition of $H^{(r)}_k$, every vertex in $G^{i,j}_k$ is adjacent to $x_i$ or $x_j$.
    Hence, no vertex in  $G^{i,j}_k$ receives colour $r+k$.
    But this implies $\psi$ restricted to $G^{i,j}_k$ is a $(r+k-1)$-colouring of $G^{i,j}_k$.
    This is a contradiction since $\chi(G^{i,j}_k) = \chi(G^{(r)}_k) > r+k-1$.
    Thus, if $\psi$ is an $(r+k)$-colouring of $H^{(r)}_{k+1}$, 
    then every vertex in the set $\{x_1,\dots, x_{r+k-1}\}$ receives a different colour.
    This completes the proof.
\end{proof}

Next, we consider how the graphs $H^{(r),+}_{k+1}$ and $H^{(r),-}_{k+1}$ can be coloured.
We assume the same technical assumptions regarding $G^{(r)}_k$.

\begin{lemma}\label{Lemma: Colour H_{k+1} minus}
    If $\chi(G^{(r)}_{k}) = r + k$ 
    and $G^{(r)}_{k}$ admits a $(r+k)$-colouring $\phi: V(G^{(r)}_{k}) \rightarrow [r+k]$ such that
    $\phi^{-1}(1) \subseteq A^{(r)}_{k}$ and $\phi^{-1}(2) \subseteq B^{(r)}_{k}$,
    then $\chi(H^{(r),-}_{k+1}) = r + k$.
    Furthermore, for all $(r+k)$-colourings $\psi:V(H^{(r),-}_{k+1}) \rightarrow [r+k]$ of $H^{(r),-}_{k+1}$,
    $\psi(\bar{u}) = \psi(\bar{v})$.
\end{lemma}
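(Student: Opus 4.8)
The plan is to reduce everything to Lemma~\ref{Lemma: Colour H_k+1} together with a single structural observation: in $H^{(r),-}_{k+1}$ the new vertex $\bar u$ is a non-adjacent twin of $\bar v$. Indeed, by construction $N(\bar u) = N_{H^{(r)}_{k+1}}(\bar v) = \{x_1,\dots,x_{r+k-1}\}$, no edges are added among the vertices of $H^{(r)}_{k+1}$, and $\bar u$ is not adjacent to $\bar v$; hence $H^{(r)}_{k+1}$ is an \emph{induced} subgraph of $H^{(r),-}_{k+1}$, and $\bar v,\bar u$ have the same (closed-on-neighbourhood) neighbourhood $\{x_1,\dots,x_{r+k-1}\}$.

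First I would record the lower bound. Since $G^{(r)}_k$ occurs as an induced subgraph of $H^{(r)}_{k+1}$, which is an induced subgraph of $H^{(r),-}_{k+1}$, we get $\chi(H^{(r),-}_{k+1}) \ge \chi(G^{(r)}_k) = r+k$ from the hypothesis.

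Next I would prove the ``furthermore'' clause, which also delivers the matching upper bound. Let $\psi$ be any proper $(r+k)$-colouring of $H^{(r),-}_{k+1}$. Its restriction to $V(H^{(r)}_{k+1})$ is a proper $(r+k)$-colouring of $H^{(r)}_{k+1}$, so Lemma~\ref{Lemma: Colour H_k+1} forces the colours $\psi(x_1),\dots,\psi(x_{r+k-1})$ to be pairwise distinct. These are $r+k-1$ distinct colours out of a palette of size $r+k$, so exactly one colour $c$ is missing from $\{\psi(x_1),\dots,\psi(x_{r+k-1})\}$. Because $N(\bar v)=N(\bar u)=\{x_1,\dots,x_{r+k-1}\}$, both $\psi(\bar v)$ and $\psi(\bar u)$ must avoid every colour appearing on the $x_i$, and therefore $\psi(\bar v)=\psi(\bar u)=c$. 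To get $\chi(H^{(r),-}_{k+1}) = r+k$ it then suffices to exhibit one $(r+k)$-colouring: take the colouring $\psi$ of $H^{(r)}_{k+1}$ constructed in the proof of Lemma~\ref{Lemma: Colour H_k+1} (with $\psi(x_i)=i$ and $\psi(\bar v)=r+k$) and extend it by setting $\psi(\bar u)=r+k$; since the only neighbours of $\bar u$ are $x_1,\dots,x_{r+k-1}$, which receive colours $1,\dots,r+k-1$, this extension is proper. Combined with the lower bound, $\chi(H^{(r),-}_{k+1}) = r+k$.

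I do not expect a genuine obstacle here: all the real work is packaged in Lemma~\ref{Lemma: Colour H_k+1}. The only points requiring care are verifying that $H^{(r)}_{k+1}$ embeds into $H^{(r),-}_{k+1}$ as an induced subgraph (so that the ``rainbow on the $x_i$'' conclusion transfers verbatim) and the elementary counting step that $r+k-1$ distinct colours on $\{x_1,\dots,x_{r+k-1}\}$ leave exactly one colour free, which is then forced on both $\bar v$ and $\bar u$.
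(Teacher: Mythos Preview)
Your proposal is correct and follows essentially the same approach as the paper: both reduce to Lemma~\ref{Lemma: Colour H_k+1}, use that $\bar u$ is a non-adjacent twin of $\bar v$ with $N(\bar u)=N(\bar v)=\{x_1,\dots,x_{r+k-1}\}$, and observe that the $r+k-1$ distinct colours on the $x_i$'s force $\psi(\bar u)=\psi(\bar v)$. The only difference is cosmetic---the paper extends an arbitrary $(r+k)$-colouring of $H^{(r)}_{k+1}$ rather than the specific one from the proof of Lemma~\ref{Lemma: Colour H_k+1}, and you make the ``exactly one colour left'' counting step explicit where the paper leaves it implicit.
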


\begin{proof}
    Suppose
    $\chi(G^{(r)}_{k}) = r + k$ 
    and $G^{(r)}_{k}$ admits a $(r+k)$-colouring $\phi: V(G^{(r)}_{k}) \rightarrow [r+k]$ such that
    $\phi^{-1}(1) \subseteq A^{(r)}_{k}$ and $\phi^{-1}(2) \subseteq B^{(r)}_{k}$.
    Then Lemma~\ref{Lemma: Colour H_k+1} implies that  $\chi(H^{(r)}_{k+1}) = r + k$ 
    and in any $(r+k)$-colouring of $H^{(r)}_{k+1}$ every vertex in $\{x_1,\dots, x_{r+k-1}\}$ receives a different colour.
    Since $H^{(r)}_{k+1}$ is a subgraph of $H^{(r),-}_{k+1}$, it is immediate that $\chi(H^{(r),-}_{k+1}) \geq \chi(H^{(r)}_{k+1}) = r + k$.
    Any $(r+k)$-colouring $\psi$ of $H^{(r)}_{k+1}$ can be extended to an $(r+k)$-colouring $\psi'$ of $H^{(r),-}_{k+1}$ by letting $\psi'(\bar{u})=\psi(\bar{v})$ and $\psi'(u)=\psi(u)$ for all $u\neq \bar{u}$ since $\bar{u}$ and $\bar{v}$ are non-adjacent and $N(\bar{u})=N(\bar{v})$.
    Thus, $\chi(H^{(r),-}_{k+1}) = r + k$.
    
    Further, every $(r+k)$-colouring of $H^{(r),-}_{k+1}$ necessarily contains an $(r+k)$-colouring of $H_{k+1}^{(r)}$.
    Since $N(\bar{v})=N(\bar{u})=\{x_1,x_2,\dots x_{r+k-1}\}$, with each $x_i$ having different colours in an $(r+k)$-colouring from Lemma~\ref{Lemma: Colour H_k+1}, 
    it is indeed necessary to colour $\bar{u}$ with the same colour as $\bar{v}$.
\end{proof}

\begin{lemma}\label{Lemma: Colour H_{k+1} plus}
    If $\chi(G^{(r)}_{k}) = r + k$ 
    and $G^{(r)}_{k}$ admits a $(r+k)$-colouring $\phi: V(G^{(r)}_{k}) \rightarrow [r+k]$ such that
    $\phi^{-1}(1) \subseteq A^{(r)}_{k}$ and $\phi^{-1}(2) \subseteq B^{(r)}_{k}$,
    then $\chi(H^{(r),+}_{k+1}) = r + k$.
    Furthermore, for all $(r+k)$-colourings $\psi:V(H^{(r),+}_{k+1}) \rightarrow [r+k]$ of $H^{(r),+}_{k+1}$,
    $\psi(\bar{u}) \neq \psi(\bar{v})$.
\end{lemma}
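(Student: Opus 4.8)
The plan is to mimic the structure of the proof of Lemma~\ref{Lemma: Colour H_{k+1} minus}, but with the twist that the gadget $\{\bar u,\bar w\}$ now \emph{forces} $\bar u$ and $\bar v$ to differ. First I would establish $\chi(H^{(r),+}_{k+1}) = r+k$. The lower bound is immediate since $H^{(r)}_{k+1}$ is a subgraph of $H^{(r),+}_{k+1}$ and Lemma~\ref{Lemma: Colour H_k+1} gives $\chi(H^{(r)}_{k+1}) = r+k$. For the upper bound, take an $(r+k)$-colouring $\psi$ of $H^{(r)}_{k+1}$ furnished by Lemma~\ref{Lemma: Colour H_k+1}; by that lemma the vertices $x_1,\dots,x_{r+k-1}$ use $r+k-1$ distinct colours, so exactly one colour, say $c$, is missing from $N(\bar w) \setminus \{\bar u\} = \{x_1,\dots,x_{r+k-1}\}$, and $\psi(\bar v)$ must itself avoid these $r+k-1$ colours on its neighbourhood (it also equals $N(\bar v)=\{x_1,\dots,x_{r+k-1}\}$), so $\psi(\bar v) = c$ as well. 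Now set $\psi(\bar w)$ to be any colour in $[r+k]\setminus\{c\}$ that also differs from $\psi(\bar v)=c$ — there are $r+k-1 \geq 2$ such colours — and then colour $\bar u$ with any colour in $[r+k]\setminus\{\psi(\bar w)\}$, in particular one may take $\psi(\bar u)\neq\psi(\bar v)$. This produces a valid $(r+k)$-colouring, so $\chi(H^{(r),+}_{k+1})=r+k$.

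Next I would prove the "furthermore" clause. Let $\psi$ be an arbitrary $(r+k)$-colouring of $H^{(r),+}_{k+1}$. Its restriction to $H^{(r)}_{k+1}$ is an $(r+k)$-colouring of $H^{(r)}_{k+1}$, so by Lemma~\ref{Lemma: Colour H_k+1} the colours $\psi(x_1),\dots,\psi(x_{r+k-1})$ are pairwise distinct; there are $r+k-1$ of them, hence exactly one colour $c\in[r+k]$ does not appear on $\{x_1,\dots,x_{r+k-1}\}$. Since $N(\bar v)=\{x_1,\dots,x_{r+k-1}\}$, we must have $\psi(\bar v)=c$. Likewise $\bar w$ is adjacent to all of $x_1,\dots,x_{r+k-1}$ and also to $\bar u$; the only colour available to $\bar w$ from among those not used by the $x_i$'s is $c$, but $\bar w$ is also adjacent to $\bar u$. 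We cannot immediately conclude $\psi(\bar w)=c$ unless $\bar u$ blocks no colour — wait, actually $\bar w$ \emph{must} receive $c$ since $c$ is the unique colour not on $\{x_1,\dots,x_{r+k-1}\}$ and $\bar w$ is adjacent to all of them, regardless of $\bar u$. Hence $\psi(\bar w)=c=\psi(\bar v)$. Finally, $\bar u$ is adjacent to $\bar w$, so $\psi(\bar u)\neq\psi(\bar w)=\psi(\bar v)$, which is exactly the desired conclusion.

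The only subtlety I anticipate is making sure the colouring constructed for the upper bound is globally proper, i.e. that extending a Lemma~\ref{Lemma: Colour H_k+1} colouring of $H^{(r)}_{k+1}$ by assigning colours to the two new vertices $\bar u,\bar w$ introduces no conflict: one checks $N(\bar u)=\{\bar w\}$ and $N(\bar w)=\{x_1,\dots,x_{r+k-1},\bar u\}$, so it suffices to pick $\psi(\bar w)\notin\{\psi(x_1),\dots,\psi(x_{r+k-1}),\psi(\bar u)\}$ after first fixing $\psi(\bar u)$; since the $x_i$ use $r+k-1$ colours and we need to avoid one more, we need $r+k \geq (r+k-1)+1$ distinct forbidden values only if they are all distinct, but $\psi(\bar u)$ can be chosen equal to $c$ (the colour missing on the $x_i$), leaving a free colour for $\bar w$ whenever $r+k-1\geq 1$, which holds since $r\geq 6$. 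With $\psi(\bar u)=c=\psi(\bar v)$ this does not yet give $\psi(\bar u)\neq\psi(\bar v)$, so instead choose $\psi(\bar w)=c$ and then $\psi(\bar u)$ any colour $\neq c$; this is consistent because the $x_i$ avoid $c$. This is the main bookkeeping obstacle, and it is routine.
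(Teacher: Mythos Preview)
Your final argument is correct and essentially matches the paper's: both establish that in any $(r+k)$-colouring the $x_i$ use $r+k-1$ distinct colours (via Lemma~\ref{Lemma: Colour H_k+1}), forcing $\psi(\bar v)=\psi(\bar w)=c$, hence $\psi(\bar u)\neq\psi(\bar w)=\psi(\bar v)$. The only difference is that the paper packages the step ``$\psi(\bar v)=\psi(\bar w)$'' by observing $H^{(r),+}_{k+1}-\bar u\cong H^{(r),-}_{k+1}$ and invoking Lemma~\ref{Lemma: Colour H_{k+1} minus}, whereas you argue it directly; this is not a genuinely different route.

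One correction to your write-up: the upper-bound construction in your first paragraph is wrong as stated. You propose setting $\psi(\bar w)$ to a colour in $[r+k]\setminus\{c\}$, but $\bar w$ is adjacent to all of $x_1,\dots,x_{r+k-1}$, which together use every colour except $c$, so $\psi(\bar w)=c$ is forced. You catch and fix this in your third paragraph (take $\psi(\bar w)=c$, then $\psi(\bar u)$ any colour $\neq c$), so the eventual construction is fine, but the exposition should be cleaned up to remove the false start.
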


\begin{proof}
    Suppose
    $\chi(G^{(r)}_{k}) = r + k$ 
    and $G^{(r)}_{k}$ admits a $(r+k)$-colouring $\phi: V(G^{(r)}_{k}) \rightarrow [r+k]$ such that
    $\phi^{-1}(1) \subseteq A^{(r)}_{k}$ and $\phi^{-1}(2) \subseteq B^{(r)}_{k}$.
    Since $V(H^{(r),+}_{k+1})\setminus \{\bar{u}\}$ induces a subgraph isomorphic to $H^{(r),-}_{k+1}$, Lemma~\ref{Lemma: Colour H_{k+1} minus} implies that we can colour the graph induced by $V(H^{(r),+}_{k+1})\setminus \{\bar{u}\}$ with $r + k$ colours.
    Since $r\ge 6$, we can clearly extend any $(r+k)$-colouring of $V(H^{(r),+}_{k+1})\setminus \{\bar{u}\}$ to an $(r + k)$-colouring of $H^{(r),+}_{k+1}$ by giving $\bar{u}$ any colour other than the colour used for $\bar{w}$, $\bar{u}$'s only neighbour.
    Thus, $\chi(H^{(r),+}_{k+1})) = r + k$.
    
    Now, for the sake of contradiction, suppose that $H^{(r),+}_{k+1}$ has an $(r+k)$-colouring, $\psi$, such that $\psi(\bar{u})=\psi(\bar{v})$.
    Since any $(r+k)$-colouring of $H_{k+1}^{(r),+}$ necessarily contains an $(r+k)$-colouring of the copy of $H_{k+1}^{(r),-}$ induced by $V(H^{(r),+}_{k+1})\setminus \{\bar{u}\}$, it follows from Lemma~\ref{Lemma: Colour H_{k+1} minus} that $\psi(\bar{v})=\psi(\bar{w})$.
    However, this contradicts the fact that $\psi$ is a proper colouring since $\psi(\bar{u})=\psi(\bar{w})$ but $\bar{u}$ and $\bar{w}$ are adjacent.
    Thus, in every  $(r+k)$-colouring of $H^{(r),+}_{k+1}$, $\bar{u}$ and $\bar{v}$ receive different colours.
\end{proof}

We are now prepared to prove that $G^{(r)}_k$
has chromatic number $r+k$ as we have claimed.
Notice that in the following argument we can let $k$ be arbitrarily large and the claim still holds.
The upper bound on $k$ here is a necessary assumption when proving $r(G^{(r)}_k) = r$ in Section~\ref{sec: path-colour}.

\begin{theorem}
\label{Thm: Colour G^r_k}
    Let $r\geq 6$ and $0 \leq k \leq \lfloor \frac{r}{2} \rfloor - 1$.
    Then, $\chi(G^{(r)}_k) = r+k$.
\end{theorem}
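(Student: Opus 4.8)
The plan is to prove this by induction on $k$, using the three colouring lemmas (Lemmas~\ref{Lemma: Colour H_k+1}, \ref{Lemma: Colour H_{k+1} minus}, and \ref{Lemma: Colour H_{k+1} plus}) as the inductive machinery. The key observation is that the hypothesis we must carry through the induction is not merely $\chi(G^{(r)}_k) = r+k$, but the stronger statement that $G^{(r)}_k$ admits an $(r+k)$-colouring $\phi$ with $\phi^{-1}(1) \subseteq A^{(r)}_k$ and $\phi^{-1}(2) \subseteq B^{(r)}_k$ — precisely the hypothesis required by all three lemmas. So I would strengthen the statement being proven by induction to: for all $0 \le k \le \lfloor r/2 \rfloor - 1$, we have $\chi(G^{(r)}_k) = r+k$ \emph{and} $G^{(r)}_k$ admits such a colouring.

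For the base case $k=0$, the graph $G^{(r)}_0 = K_r$ has $\chi = r$, and any proper $r$-colouring assigns distinct colours to $v_1,\dots,v_r$; by permuting colours we may assume $v_1$ (which lies in $A^{(r)}_0$) receives colour $1$ and $v_{\lfloor r/2\rfloor + 1}$ (which lies in $B^{(r)}_0$) receives colour $2$. Since each colour class of $K_r$ is a single vertex, $\phi^{-1}(1) = \{v_1\} \subseteq A^{(r)}_0$ and $\phi^{-1}(2) = \{v_{\lfloor r/2 \rfloor+1}\} \subseteq B^{(r)}_0$, establishing the base case.

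For the inductive step, assume the statement holds for $k$ with $k < \lfloor r/2 \rfloor - 1$. By the inductive hypothesis the technical assumption on $G^{(r)}_k$ holds, so Lemmas~\ref{Lemma: Colour H_{k+1} minus} and \ref{Lemma: Colour H_{k+1} plus} apply: $\chi(H^{(r),-}_{k+1}) = \chi(H^{(r),+}_{k+1}) = r+k$, in every $(r+k)$-colouring of $H^{(r),-}_{k+1}$ we have $\psi(\bar{u}) = \psi(\bar{v})$, and in every $(r+k)$-colouring of $H^{(r),+}_{k+1}$ we have $\psi(\bar{u}) \neq \psi(\bar{v})$. Recall $G^{(r)}_{k+1}$ is formed by gluing $H^{(r),+}_{k+1}$ and $H^{(r),-}_{k+1}$ along the vertex pair $\{\bar{u},\bar{v}\}$. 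For the lower bound $\chi(G^{(r)}_{k+1}) \ge r+k+1$: suppose for contradiction it had an $(r+k)$-colouring $\psi$; restricting to the $H^{(r),+}_{k+1}$ side forces $\psi(\bar u) \neq \psi(\bar v)$, while restricting to the $H^{(r),-}_{k+1}$ side forces $\psi(\bar u) = \psi(\bar v)$, a contradiction. For the upper bound and the colour-class condition: take an $(r+k)$-colouring of $H^{(r),+}_{k+1}$ and an $(r+k)$-colouring of $H^{(r),-}_{k+1}$; on the minus side relabel the one colour shared by $\bar u$ and $\bar v$ so that it becomes a brand new colour $r+k+1$ restricted to an appropriate subset, or more carefully, merge the two colourings consistently on $\{\bar u, \bar v\}$ and introduce one extra colour to absorb the conflict on the minus side; then verify that colours $1$ and $2$ can be chosen to witness the required $A$/$B$ containment, using that $\bar v \in A^{(r)}_{k+1}$, $\bar u \in A^{(r)}_{k+1}$, $\bar w, x_i \in B^{(r)}_{k+1}$, and that the copies $A^{i,j}_{k,\pm}$, $B^{i,j}_{k,\pm}$ inherit the base colouring's structure after the permutations applied inside Lemma~\ref{Lemma: Colour H_k+1}.

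The main obstacle I expect is the upper bound portion of the inductive step: one must exhibit an explicit $(r+k+1)$-colouring of $G^{(r)}_{k+1}$ and simultaneously guarantee the colour-class containment $\phi^{-1}(1) \subseteq A^{(r)}_{k+1}$, $\phi^{-1}(2) \subseteq B^{(r)}_{k+1}$. Combining the two independently-constructed colourings of $H^{(r),+}_{k+1}$ and $H^{(r),-}_{k+1}$ requires care at the identified vertices $\bar u, \bar v$, and tracking where colours $1$ and $2$ land throughout the recursive construction (through the colour permutations $\sigma$ used in Lemma~\ref{Lemma: Colour H_k+1}) is the delicate bookkeeping step; everything else follows formally from the three lemmas.
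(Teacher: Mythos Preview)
Your proposal is correct and follows essentially the same approach as the paper: strengthen the induction to carry the $A/B$ colour-class condition, handle the base case trivially, derive the lower bound from the plus/minus contradiction at $\{\bar u,\bar v\}$, and construct the $(r+k+1)$-colouring explicitly. For the upper-bound step you flag as the main obstacle, the paper's device is precisely this: align $\psi^+(\bar v)=\psi^-(\bar v)$, set $\phi(\bar u)=\phi(\bar v)=1$, shift every other colour $c\neq 1$ to $c+1$, and split the old colour-$1$ class into colour $1$ on $A^{(r)}_{k+1}$ and colour $2$ on $B^{(r)}_{k+1}$, which simultaneously yields a proper $(r+k+1)$-colouring and the required containments.
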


\begin{proof}
    Let $r\geq 6$ be fixed.
    We will prove the following stronger claim.
    For all $r \leq k \leq \lfloor \frac{r}{2} \rfloor - 1$,
    $\chi(G^{(r)}_{k}) = r + k$ 
    and $G^{(r)}_{k}$ admits a $(r+k)$-colouring $\phi: V(G^{(r)}_{k}) \rightarrow [r+k]$ such that
    $\phi^{-1}(1) \subseteq A^{(r)}_{k}$ and $\phi^{-1}(2) \subseteq B^{(r)}_{k}$.
    We induct on $k$.

    For the base case suppose that $k = 0$.
    Then, $G^{(r)}_{k} = K_r$ and the result is trivial.
    Suppose then that $k \geq 0$ and the claim holds for $G^{(r)}_{k}$.
    We will prove the claim holds for $G^{(r)}_{k+1}$.

\vspace{0.25cm}
\noindent\underline{Claim.1:} $\chi(G^{(r)}_{k+1})>r+k$.
\vspace{0.25cm}

    Suppose for the sake of contradiction there exists a $(r+k)$-colouring $\psi$ of $G^{(r)}_{k+1}$.
    Recall that $H_{k+1}^{(r),+}$ and $H_{k+1}^{(r),-}$ are subgraphs of $G^{(r)}_{k+1}$
    such that the vertex sets of these subgraphs intersect in exactly $\{\bar{u},\bar{v}\}$.
    Thus, $\psi$ induces a $(r+k)$-colouring $\psi^+$ of $H_{k+1}^{(r),+}$ 
    and a $(r+k)$-colouring $\psi^-$ of $H_{k+1}^{(r),-}$.
    By the induction hypothesis on $G^{(r)}_{k}$, we can assume the conclusions of Lemma~\ref{Lemma: Colour H_{k+1} minus} and Lemma~\ref{Lemma: Colour H_{k+1} plus}.
    But this implies 
    $$
    \psi^-(\bar{u}) = \psi^-(\bar{v}) = \psi^+(\bar{v}) \neq \psi^+(\bar{u}) = \psi^-(\bar{u})
    $$
    which is a contradiction. 
    Hence, $G^{(r)}_{k+1}$ is not $(r+k)$-colourable.
    Thus, $\chi(G^{(r)}_{k+1})>r+k$ concluding the proof of this claim.
    \hfill $\diamond$

\vspace{0.25cm}
\noindent\underline{Claim.2:} $G^{(r)}_{k+1}$ admits a $(r+k+1)$-colouring $\phi: V(G^{(r)}_{k+1}) \rightarrow [r+k+1]$ such that
    $\phi^{-1}(1) \subseteq A^{(r)}_{k+1}$ and $\phi^{-1}(2) \subseteq B^{(r)}_{k+1}$.
\vspace{0.25cm}

    Recall that $H_{k+1}^{(r),+}$ and $H_{k+1}^{(r),-}$ are subgraphs of $G^{(r)}_{k+1}$
    such that the vertex sets of these subgraphs intersect in exactly $\{\bar{u},\bar{v}\}$.
    By the induction hypothesis on $G^{(r)}_{k}$, 
    we can assume the conclusions of Lemma~\ref{Lemma: Colour H_{k+1} minus} and Lemma~\ref{Lemma: Colour H_{k+1} plus}.
    Hence, 
    $$
    \chi(H_{k+1}^{(r),+}) = \chi(H_{k+1}^{(r),-}) = r+k.
    $$
    Let $\psi^+$ be a $(r+k)$-colouring of $H_{k+1}^{(r),+}$ 
    and let $\psi^-$ be a $(r+k)$-colouring of $H_{k+1}^{(r),-}$.
    Without loss of generality we suppose that $\psi^+(\bar{v}) = \psi^-(\bar{v})$.

    We define an $(r+k+1)$-colouring $\phi$ of $G^{(r)}_{k+1}$ as follows.
    Let $\phi(\bar{v}) = 1$ 
    and let $\phi(\bar{u}) = 1$.
    For every vertex $v \in V(H_{k+1}^{(r),+})\setminus \{\bar{u},\bar{v}\}$
    if $\psi^+(v) \neq 1$, let $\phi(v) = \psi^+(v)+1$.
    If $\psi^+(v) = 1$, then we let 
    $\phi(v) = 1$ if $v \in A^{(r)}_{k+1}$ and 
    $\phi(v) = 2$ if $v \in B^{(r)}_{k+1}$.
    Similarly, for all vertices $w \in  V(H_{k+1}^{(r),-})\setminus \{\bar{u},\bar{v}\}$
     if $\psi^-(w) \neq 1$, let $\phi(w) = \psi^-(w)+1$.
    If $\psi^-(w) = 1$, then we let 
    $\phi(w) = 1$ if $w \in A^{(r)}_{k+1}$ and 
    $\phi(w) = 2$ if $v \in B^{(r)}_{k+1}$.

    We claim $\phi$ is an $(r+k+1)$-colouring of $G^{(r)}_{k+1}$ such that 
    $\phi^{-1}(1) \subseteq A^{(r)}_{k+1}$ and $\phi^{-1}(2) \subseteq B^{(r)}_{k+1}$.
    Suppose not for the sake of contradiction.
    By the definition of $\phi$, it is trivial to verify that
    $\phi^{-1}(1) \subseteq A^{(r)}_{k+1}$ and $\phi^{-1}(2) \subseteq B^{(r)}_{k+1}$,
    so it remains only to show that $\phi$ is a proper colouring. If not, then there exist adjacent vertices $u,v$ in $G^{(r)}_{k+1}$ such that $\phi(u) = \phi(v)$.
    Since, $\{\bar{u},\bar{v}\}$ is a vertex cut that separates the vertices of 
    $V(H_{k+1}^{(r),+})\setminus \{\bar{u},\bar{v}\}$
    from the vertices of $V(H_{k+1}^{(r),-})\setminus \{\bar{u},\bar{v}\}$,
    the vertices $u$ and $v$ must be both in $H_{k+1}^{(r),+}$ or both in $H_{k+1}^{(r),-}$.

    Without loss of generality suppose $u$ and $v$ are both in $H_{k+1}^{(r),+}$.
    Since $u$ and $v$ are adjacent, $\{u,v\}\neq \{\bar{u},\bar{v}\}$.
    Additionally,
    $\psi^+(u) \neq \psi^+(v)$, given that $\psi^+$ is a proper colouring $H_{k+1}^{(r),+}$.
    If $\psi^+(u) \neq 1$ and $\psi^+(v) \neq 1$ this is a contradiction, since we would then have
    $$
    \phi(v) = \phi(u) = \psi^+(u) + 1 \neq \psi^+(v) + 1 = \phi(v),
    $$
    or if $\{u,v\}\cap \{\bar{u},\bar{v}\} \neq \emptyset$, one of $u$ and $v$ receive colour $1$ while the other does not.
    Hence, at least one of $u$ and $v$ receives colour $1$ from $\psi^+$.
    Suppose exactly one of $u$ and $v$ receives colour $1$ from $\psi^+$. 
    Say $v$ without loss of generality. 
    Then $\phi(v) \in \{1,2\}$, while $\phi(u) = \psi^+(u)+1 \geq 3$, unless $u = \bar{u}$ or  $u = \bar{v}$.
    Thus, we suppose $u = \bar{u}$ or  $u = \bar{v}$.
    Then $\phi(u) = 1$, and $v$ is a vertex in $B^{(r)}_{k+1}$, since $u$ and $v$ are adjacent.
    By the definition of $\phi$,  and since $v \in B^{(r)}_{k+1}$ while $\psi^+(v) = 1$, we must have $\phi(v) = 2$, contradicting $\phi(u) = \phi(v) = 1$.
    Suppose then that $\psi^+(u) = \psi^+(v) = 1$.
    This contradicts $u$ and $v$ being adjacent, since $\psi^+$ is a proper colouring of $H_{k+1}^{(r),+}$.
    Therefore, we have proven the claim.
    \hfill $\diamond$

\vspace{0.25cm}
As we have proven the induction statement this concludes the proof.
\end{proof}

\section{Colouring Traceable Subgraphs of $G^{(r)}_k$}
\label{sec: path-colour}

Now we turn our attention to colouring subgraphs of $G^{(r)}_k$ spanned by paths.
To start we prove two useful facts about sparse graphs.
We note that this bound on chromatic number is tight when $H$ is isomorphic to $K_4$.

\begin{lemma}\label{Lemma: Almost forest 4-colour}
    If $H$ is a graph and $T\subseteq E(H)$ such that $|T|\le 5$ and $H-T$ is a forest, then $\chi(H) \leq 4$.
\end{lemma}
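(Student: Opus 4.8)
The plan is to show that any graph $H$ which becomes a forest after deleting at most five edges is $4$-colourable, by reducing to a small finite case analysis on the structure of the deleted edges. First I would handle the trivial reductions: it suffices to assume $H$ is connected and that every vertex of $H$ has degree at least $4$ (otherwise remove a low-degree vertex, colour the rest by induction on $|V(H)|$, and extend). Under this minimum-degree assumption, counting edges gives $|E(H)| \geq 2|V(H)|$, while $H-T$ being a forest means $|E(H)| \leq |V(H)| - 1 + |T| \leq |V(H)| + 4$. Combining, $2|V(H)| \leq |V(H)| + 4$, so $|V(H)| \leq 4$, and then $\chi(H) \leq |V(H)| \leq 4$ immediately.

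So the real content is just the degeneracy argument, and the ``main obstacle'' is essentially bookkeeping: one must be careful that the reduction to minimum degree $\geq 4$ is valid, i.e.\ that deleting a vertex of degree $\leq 3$ from $H$ leaves a graph $H'$ that still satisfies the hypothesis with the same set $T$ (intersected with $E(H')$) — which is clear since an induced subgraph of a forest is a forest and $|T \cap E(H')| \leq |T| \leq 5$. Then a minimal counterexample $H$ (minimum number of vertices) has $\delta(H) \geq 4$, and the edge count forces $|V(H)| \leq 4$, contradicting that $H$ is a counterexample since any graph on at most $4$ vertices is $4$-colourable.

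Concretely I would write: suppose for contradiction that $H$ is a vertex-minimal graph with $T \subseteq E(H)$, $|T| \leq 5$, $H - T$ a forest, and $\chi(H) \geq 5$. If some $v \in V(H)$ has $\deg_H(v) \leq 3$, then $H - v$ with $T' = T \cap E(H-v)$ satisfies the hypotheses ($H - v - T' $ is a subgraph of the forest $H - T$, hence a forest, and $|T'| \leq 5$), so by minimality $\chi(H-v) \leq 4$; extending a $4$-colouring of $H-v$ to $v$ is possible since $v$ has at most $3$ neighbours, contradicting $\chi(H) \geq 5$. Hence $\delta(H) \geq 4$, giving $2|E(H)| = \sum_{v} \deg_H(v) \geq 4|V(H)|$, i.e.\ $|E(H)| \geq 2|V(H)|$. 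On the other hand a forest on $|V(H)|$ vertices has at most $|V(H)| - 1$ edges, so $|E(H)| \leq |V(H)| - 1 + |T| \leq |V(H)| + 4$. Therefore $2|V(H)| \leq |V(H)| + 4$, so $|V(H)| \leq 4$, whence $\chi(H) \leq 4$, a contradiction. The tightness claim for $K_4$ is immediate: $K_4$ is $4$-chromatic, has $6$ edges, and deleting any $5$ of them leaves at most one edge, which is a forest, so $|T| = 5$ genuinely arises and $4$ cannot be lowered.
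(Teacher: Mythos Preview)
Your proof is correct and follows essentially the same approach as the paper: take a vertex-minimal counterexample, argue $\delta(H)\ge 4$, and use the edge count $2|V(H)|\le |E(H)|\le |V(H)|-1+|T|$ to force $|V(H)|\le 4$. The paper phrases the final contradiction as an inequality chain on $|E(H)|-|E(H-T)|$ rather than bounding $|V(H)|$ directly, but the content is identical.
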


\begin{proof}
    Let $H$ be a smallest graph with the property that there exists $T\subseteq E(H)$ with $|T|\le 5$ such that $H-T$ is a forest and $\chi(H) > 4$. 
    Then,  clearly we must have $|V(H)|\geq 5$.
    We claim that $\delta(H) \ge 4$. 
    Suppose not. Then there exists $v\in V(H)$ such that $\deg(v)<4$. 
    We must then have $\chi(G-v) > 4$ as, otherwise, any $4$-colouring of $H-v$ could be extended to a $4$-colouring of $H$ using any of the colours not appearing in the neighbourhood of $v$.
    Further, letting $T'=T\cap E(H-v)$  we have that $(H-v)-T'$ is a forest, since $H-T$ is a forest.
    This contradicts $H$ being the smallest graph with this property.
    Thus, $\delta(H) \geq 4$.
    Then, by the handshaking lemma, $|E(H)| \geq  2|V(H)|$.
    Since $H- {T}$ is a forest, 
    $$|E(H- {T})| \leq|V(H- {T})| -1  = |V(H)| -1.$$
    Hence, since $|V(H)|\geq 5$, we have shown,
    \begin{align*}
        |E(H)| - |E(H- {T})| & \geq 2|V(H)| - |V(H- {T})| +1\\
        & = |V(H)| +1
        \\ &\geq 6
        \\ & > 5
        \\ &  {\ge} |E(H)| - |E(H- {T})|
    \end{align*}
    which is a contradiction.
    Therefore, we conclude $\chi(H) \leq 4$ as desired.
\end{proof}

\begin{lemma}\label{Lemma: Worst case Almost forest 4-colour}
    If $H$ is a graph and  {$T\subseteq E(H)$ with $|T|\le 6$} such that  $H- {T}$ is a forest with at least $2$ connected components, 
    then $\chi(H) \leq 4$.
\end{lemma}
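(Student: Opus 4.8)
The plan is to imitate the minimum‑counterexample argument used for Lemma~\ref{Lemma: Almost forest 4-colour}: the extra connected component of $H-T$ buys one more unit in the edge count, and that is exactly what lets the argument survive $|T|\le 6$ rather than $|T|\le 5$. So suppose the statement fails and let $H$ be a counterexample on as few vertices as possible. Then there is $T\subseteq E(H)$ with $|T|\le 6$ such that $H-T$ is a forest with at least two components and $\chi(H)>4$; in particular $|V(H)|\ge 5$. (For the argument below we work with a connected $H$, which is the case needed in Section~\ref{sec: path-colour} since the graphs there are spanned by paths; an isolated vertex of $H$ contributes a spurious component and must be excluded.)

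The first step is to show $\delta(H)\ge 4$. Suppose $v\in V(H)$ has $\deg_H(v)\le 3$. As in the previous lemma, $\chi(H-v)>4$, since otherwise a $4$‑colouring of $H-v$ would extend to $H$ through $v$. Put $T'=T\cap E(H-v)$, so that $(H-v)-T'=(H-T)-v$ is again a forest. Deleting a vertex from a forest never decreases the number of components unless that vertex is isolated, so if $v$ is not isolated in $H-T$ then $(H-v)-T'$ still has at least two components and $|T'|\le |T|\le 6$, making $H-v$ a smaller counterexample — contradiction. If instead $v$ is isolated in $H-T$, then all $\deg_H(v)\le 3$ edges of $H$ at $v$ lie in $T$, so $|T'|=|T|-\deg_H(v)\le 5$, and Lemma~\ref{Lemma: Almost forest 4-colour} applied to $H-v$ gives $\chi(H-v)\le 4$, again a contradiction. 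Hence $\delta(H)\ge 4$. Now count edges: the handshake lemma gives $|E(H)|\ge 2|V(H)|$, while $H-T$ is a forest on $|V(H)|$ vertices with at least two components, so $|E(H-T)|\le |V(H)|-2$. Therefore
\begin{align*}
|T| \;=\; |E(H)|-|E(H-T)| \;\ge\; 2|V(H)|-\bigl(|V(H)|-2\bigr) \;=\; |V(H)|+2 \;\ge\; 7,
\end{align*}
contradicting $|T|\le 6$, and the lemma follows.

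The main obstacle is the bookkeeping in the minimum‑degree step: one must pin down exactly when deleting a low‑degree vertex keeps $H-T$ a forest with at least two components, and the borderline case — a vertex isolated in $H-T$ — is precisely where the slack $|T'|\le 5$ makes Lemma~\ref{Lemma: Almost forest 4-colour} usable. It is worth recording a cleaner route when $H$ is connected, which avoids induction entirely: if $H$ is connected and $H-T$ is a forest with $c\ge 2$ components, then $|T|=|E(H)|-\bigl(|V(H)|-c\bigr)$ equals the cycle rank $|E(H)|-|V(H)|+1$ of $H$ plus $c-1$, so the cycle rank of $H$ is at most $|T|-1\le 5$; removing a minimum feedback edge set of size at most $5$ turns $H$ into a spanning tree, and Lemma~\ref{Lemma: Almost forest 4-colour} finishes. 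Since the graphs to which this lemma is applied are spanned by paths, and hence connected, this observation suffices.
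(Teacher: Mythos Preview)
Your first argument follows the paper's approach --- a minimum-counterexample argument ending in the same edge count, with the sharper bound $|E(H-T)|\le |V(H)|-2$ replacing $|V(H)|-1$. You go beyond the paper by actually verifying the $\delta(H)\ge 4$ step: the paper simply says ``same argument as Lemma~\ref{Lemma: Almost forest 4-colour}'' without checking that $(H-v)-T'$ still has at least two components, and you correctly split on whether $v$ is isolated in $H-T$. You are also right that the lemma as stated is false without a connectedness hypothesis: $H=K_5+K_1$ (or $K_5+K_2$, if one wants no isolated vertices) with $T$ the six non-tree edges of the $K_5$ gives $\chi(H)=5$, $|T|=6$, and $H-T$ a two-component forest. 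Your cycle-rank argument for connected $H$ is a clean and correct fix, and is genuinely simpler than the minimum-counterexample route since it sidesteps the component bookkeeping entirely.

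Two small corrections. First, the quantification in your minimum-counterexample argument is muddled: you take a smallest counterexample over all graphs but then restrict to connected $H$; if instead you minimise only over connected graphs, $H-v$ need not be connected in Case~1, so the induction does not close. Your second argument avoids this entirely, so just lead with it. Second, the graph $Q'$ in Section~\ref{sec: path-colour} is \emph{not} spanned by a path --- it is an auxiliary graph on $\{x_1,\dots,x_{r+k-1}\}$ recording which gadgets the path $P$ visits, and it may well be disconnected. The correct reduction is component-by-component: for each connected component $Q'_i$ with $T_i=T\cap E(Q'_i)$, either $|T_i|\le 5$ and Lemma~\ref{Lemma: Almost forest 4-colour} applies, or $|T_i|=6$ and then either $Q'_i-T_i$ is disconnected (your cycle-rank argument applies) or $Q'_i-T_i$ is a single path, which is exactly the configuration the paper rules out by its direct structural argument on $P_1,P_2,P_3$.
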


\begin{proof}
    We apply the same argument as Lemma~\ref{Lemma: Almost forest 4-colour}, except $|E(H -  {T})| \leq |V(H)| - 2$, 
    since $H- {T}$ is a disconnected forest.
    This leads to
    \begin{align*}
        |E(H)| - |E(H- {T})| & \geq 2|V(H)| - |V(H- {T})| +2\\
        & = |V(H)| +2
        \\ &\geq 7
        \\ & > 6
        \\ &  {\ge} |E(H)| - |E(H- {T})|
    \end{align*}
   which is a contradiction.
\end{proof}

Next, 
we demonstrate some helpful structure in $G^{(r)}_k$.
In particular, we consider the structure of the induced subgraphs $G^{(r)}_k[A^{(r)}_k]$, and $G^{(r)}_k[B^{(r)}_k]$, as well as the structure of the edge set $E\left(A^{(r)}_k, B^{(r)}_k\right)$.
These observations are not hard to make, 
but they are vital to studying some of the traceable subgraphs of $G^{(r)}_k$.

For this, we introduce some extra notation. 
In $G_{k+1}^{(r)}$ and for each pair $i,j$ with $1\leq i< j\leq r+k-1$, 
let $G^{i,j}_{k,+}$ be the copy of $G_k^{i,j}$ contained in the induced copy of $H_{k+1}^{(r),+}$, 
and $G^{i,j}_{k,-}$ be the copy of $G_k^{i,j}$ contained in the induced copy of $H_{k+1}^{(r),-}$.
We use $G^{i,j}_{k,\pm}$ to refer to $G^{i,j}_{k,+}$ and/or $G^{i,j}_{k,-}$.

\begin{lemma}\label{Lemma: Induced As}
     Let $r\geq 6$ and $0 \leq k \leq \lfloor \frac{r}{2} \rfloor - 1$.
     Every connected component of $G^{(r)}_k[A^{(r)}_k]$ is a clique of size $1$ or $\lfloor \frac{r}{2} \rfloor$.
\end{lemma}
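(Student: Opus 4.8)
The plan is to argue by induction on $k$ and track exactly which vertices of $A^{(r)}_k$ are adjacent to which, using the recursive description of the construction. For the base case $k=0$, we have $G^{(r)}_0 = K_r$ and $A^{(r)}_0 = \{v_1,\dots,v_{\lfloor r/2\rfloor}\}$, so $G^{(r)}_0[A^{(r)}_0]$ is a single clique of size $\lfloor r/2\rfloor$, as required. For the inductive step, recall that $A^{(r)}_{k+1} = \{\bar u,\bar v\} \cup \bigcup_{i<j} A^{i,j}_{k,+} \cup \bigcup_{i<j} A^{i,j}_{k,-}$. The key structural observations to extract from the construction are: (i) $\bar u$ has its only neighbour $\bar w \notin A^{(r)}_{k+1}$, so $\bar u$ is an isolated vertex in $G^{(r)}_{k+1}[A^{(r)}_{k+1}]$, giving a clique of size $1$; (ii) $\bar v$'s neighbourhood is $\{x_1,\dots,x_{r+k-1}\}$ (together with $\bar w$ on one side), none of which lie in $A^{(r)}_{k+1}$, so $\bar v$ is likewise isolated in $G^{(r)}_{k+1}[A^{(r)}_{k+1}]$; (iii) within each copy $G^{i,j}_{k,\pm}$, no edges were added between distinct copies, and the only new edges incident to a copy go to the vertices $x_i, x_j$ (or $y_i, y_j$), which are not in $A^{(r)}_{k+1}$. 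Hence $G^{(r)}_{k+1}[A^{(r)}_{k+1}]$ is the disjoint union of two isolated vertices ($\bar u$ and $\bar v$) together with, over all pairs $i<j$ and each sign, a disjoint copy of $G^{i,j}_{k,\pm}[A^{i,j}_{k,\pm}] \cong G^{(r)}_k[A^{(r)}_k]$.

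From here the induction closes immediately: by the inductive hypothesis every connected component of $G^{(r)}_k[A^{(r)}_k]$ is a clique of size $1$ or $\lfloor r/2\rfloor$, and this property is clearly preserved by taking disjoint unions and by adding isolated vertices. So every connected component of $G^{(r)}_{k+1}[A^{(r)}_{k+1}]$ is a clique of size $1$ or $\lfloor r/2\rfloor$, completing the induction.

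The main obstacle — really the only nontrivial point — is verifying claim (iii) carefully from the construction: one must confirm that when forming $H^{(r)}_{k+1}$ the only edges added incident to the copies $G^{i,j}_k$ are those from $x_i$ to $A^{i,j}_k$ and $x_j$ to $B^{i,j}_k$ (with the roles swapped for the pair $\{1, r+k-1\}$), that no edges run between two different copies, and that the second and third stages (adding $\bar u, \bar w$ and identifying vertices) add no edges among the vertices of $H^{(r)}_{k+1}$ and in particular none inside $A^{(r)}_{k+1}$. Since $x_i, x_j$ (and their $-$-side relabels $y_i, y_j$) and $\bar w$ all lie in $B^{(r)}_{k+1}$, none of these new edges have both endpoints in $A^{(r)}_{k+1}$, so they do not affect $G^{(r)}_{k+1}[A^{(r)}_{k+1}]$. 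I would state this as the crux of the argument and then let the disjoint-union/isolated-vertex bookkeeping finish the proof.
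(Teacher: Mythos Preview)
Your approach is correct and matches the paper's proof exactly: induct on $k$, observe that every component of $G^{(r)}_{k+1}[A^{(r)}_{k+1}]$ is either $\{\bar u\}$, $\{\bar v\}$, or lies inside some $G^{i,j}_{k,\pm}$, and apply the hypothesis. One minor inaccuracy worth fixing: after the identifications, $\bar u$'s full neighbourhood in $G^{(r)}_{k+1}$ is $\{\bar w\}\cup\{y_1,\dots,y_{r+k-1}\}$ (not just $\{\bar w\}$), and $\bar v$ is adjacent to all the $x_i$'s and $y_i$'s but \emph{not} to $\bar w$; since all of these vertices lie in $B^{(r)}_{k+1}$, your conclusion that $\bar u$ and $\bar v$ are isolated in $G^{(r)}_{k+1}[A^{(r)}_{k+1}]$ is unaffected.
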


\begin{proof}
    Let $r\geq 6$ be fixed. The claim is trivial for $G^{(r)}_0 \cong K_r$.
    Suppose the claim is true for $k\geq 0$.
    The reader can easily verify every connected component of $G^{(r)}_{k+1}[A^{(r)}_{k+1}]$ is a subgraph of some $G^{i,j}_{k,\pm}$, or is $\{\bar{u}\}$, or is $\{\bar{v}\}$.
    The results follows immediately by induction.
\end{proof}

\begin{lemma}\label{Lemma: Bs next to As}
     Let $r\geq 6$ and $0 \leq k \leq \lfloor \frac{r}{2} \rfloor - 1$.
     If $C$ is a connected component of $G^{(r)}_k[A^{(r)}_k]$
     and $v$ is a vertex such that $N(v) \cap V(C) \neq \emptyset$,
     then $V(C) \subseteq N[v]$.
\end{lemma}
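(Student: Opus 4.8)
\textbf{Proof proposal for Lemma~\ref{Lemma: Bs next to As}.}
The plan is to mirror the inductive strategy used for Lemma~\ref{Lemma: Induced As}. The base case $G^{(r)}_0 \cong K_r$ is immediate: here $A^{(r)}_0$ induces a clique, so any vertex $v$ with a neighbour in that clique (necessarily a vertex of $B^{(r)}_0$, or another vertex of $A^{(r)}_0$) is in fact adjacent to all of $A^{(r)}_0$ by the clique structure together with the way the construction wires $x$-vertices to whole sets $A^{i,j}_k$ or $B^{i,j}_k$; but at the bottom level there are no $x$-vertices, so one only checks that $K_r$ and the trivial one-vertex components behave correctly, which they do. Then I assume the statement for $G^{(r)}_k$ and prove it for $G^{(r)}_{k+1}$.

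For the inductive step, first recall from the proof of Lemma~\ref{Lemma: Induced As} that every connected component $C$ of $G^{(r)}_{k+1}[A^{(r)}_{k+1}]$ is either the singleton $\{\bar u\}$, the singleton $\{\bar v\}$, or a component lying entirely inside some $G^{i,j}_{k,\pm}$ — and in the latter case it is in fact a component of $G^{i,j}_{k,\pm}[A^{i,j}_{k,\pm}]$, since the only edges leaving a copy $G^{i,j}_{k,\pm}$ go to the independent $x$- or $y$-vertices (which are not in $A^{(r)}_{k+1}$) and to nothing else. So I would split into these cases. If $C = \{\bar v\}$, then $N(\bar v) = \{x_1,\dots,x_{r+k-1},y_1,\dots,y_{r+k-1}\}$ (recall $\bar v$ is the identified vertex and its neighbourhood in $G^{(r)}_{k+1}$ is the union of the two copies of the $x$-set), and any $v$ adjacent to $\bar v$ lies in that set, so trivially $V(C) = \{\bar v\} \subseteq N[v]$. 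Likewise $C = \{\bar u\}$: its only neighbours are $\bar w$ (in the $+$ side) and $\bar v$ via the $-$ side construction — in any case $V(C)$ is a singleton and the conclusion $V(C)\subseteq N[v]$ is automatic. The substantive case is $C \subseteq G^{i,j}_{k,\pm}$, i.e. $C$ is a component of the copy of $G^{(r)}_k[A^{(r)}_k]$ sitting inside that $G^{i,j}_{k,\pm}$.

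In that substantive case, let $v$ be a vertex with $N(v)\cap V(C)\neq\emptyset$. The neighbours of vertices of $G^{i,j}_{k,\pm}$ outside that copy are exactly: (a) $x_i$ or $x_j$ (resp. $y_i$ or $y_j$), which are adjacent to all of $A^{i,j}_{k,\pm}$ or all of $B^{i,j}_{k,\pm}$, never to a proper nonempty subset of a component of $A^{i,j}_{k,\pm}$; and (b) nothing else. So either $v$ lies inside $G^{i,j}_{k,\pm}$ — in which case $N(v)\cap V(C)\neq\emptyset$ and the induction hypothesis applied to the copy of $G^{(r)}_k$ give $V(C)\subseteq N[v]$ — or $v$ is one of $x_i,x_j$ (resp. $y_i,y_j$), in which case, because $C\subseteq A^{i,j}_{k,\pm}$ and the $x$-vertex that is adjacent to anything in $A^{i,j}_{k,\pm}$ is adjacent to all of $A^{i,j}_{k,\pm}$ (by the definition of the edge set of $H^{(r)}_{k+1}$), we again get $V(C)\subseteq N[v]$. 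I expect the only mild obstacle is bookkeeping: making sure the ``outside neighbours of a copy $G^{i,j}_{k,\pm}$'' claim is stated precisely (including the exceptional pair $\{i,j\}=\{1,r+k-1\}$, where $x_i$ attaches to $B$ and $x_j$ to $A$ instead of the usual orientation — but this swap does not affect the all-or-nothing adjacency that the argument relies on), and checking the two singleton cases $\bar u,\bar v$ against the exact neighbourhoods fixed in Section~\ref{sec: construction}. With that in hand the induction closes.
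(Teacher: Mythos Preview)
Your proposal is correct and follows essentially the same approach as the paper: induct on $k$, dispose of singleton components trivially, and for a nontrivial component $C\subseteq G^{i,j}_{k,\pm}$ apply the induction hypothesis when $v$ lies inside the copy and use the all-or-nothing adjacency of the $x$- (or $y$-) vertex to $A^{i,j}_{k,\pm}$ when $v$ lies outside, noting the special pair $\{1,r+k-1\}$. One small slip: in the $-$ side, $\bar u$ is adjacent to $\{y_1,\dots,y_{r+k-1}\}$, not to $\bar v$; but as you observe, the singleton case is automatic regardless of what $N(\bar u)$ is, so this does not affect the argument.
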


\begin{proof}
    Let $r\geq 6$ be fixed. The claim is trivial for $G^{(r)}_0 \cong K_r$.
    Suppose the claim is true for some $k\geq 0$.
    Let $C$ is a connected component of $G^{(r)}_{k+1}[A^{(r)}_{k+1}]$. 
    If $|C| = 1$, the result it trivial.
    Otherwise, Lemma~\ref{Lemma: Induced As} implies $|C| = \lfloor \frac{r}{2} \rfloor$.
    Moreover, the proof of Lemma~\ref{Lemma: Induced As} implies $C$ is a subgraph of some $G^{i,j}_{k,\pm}$.
    By induction, the result holds for every neighbour of $C$ in $G^{i,j}_{k,\pm}$.
    Thus, let $v\in V(G^{(r)}_{k+1})\setminus V(G^{i,j}_{k,\pm})$ be a vertex such that $N(v) \cap V(C) \neq \emptyset$.
    By definition, $v=x_i$ ($x_j$ if $i=1$ and $j=r+k-1$) if $C\in G^{i,j}_{k,+}$, and, similarly, $v=y_i$ ($y_j$ if $i=1$ and $j=r+k-1$) if $C\in G^{i,j}_{k,-}$.
    Again by definition, $x_i$ ($x_j$ if $i=1$ and $j=r+k-1$) is complete to $A^{i,j}_k\supseteq C$ in $G^{i,j}_{k,+}$ and $y_i$ ($y_j$ if $i=1$ and $j=r+k-1$) is complete to $A^{i,j}_k\supseteq C$ in $G^{i,j}_{k,-}$.
    The result follows by induction.
\end{proof}

\begin{lemma}\label{Lemma: k+2 Bs next to a common A vertex}
    Let $r\geq 6$ and $0 \leq k \leq \lfloor \frac{r}{2} \rfloor - 1$.   
    If $C$ is a connected component of $G^{(r)}_k[A^{(r)}_k]$ not isomorphic to $K_1$ and $b_1,\dots, b_{k+2} \in B^{(r)}_k$ are distinct vertices such that for all $1\leq i \leq k+2$, $N(b_i)\cap V(C)\neq \emptyset$,
    then there exist distinct vertices $b_i,b_j \in \{b_1,\dots, b_{k+2}\}$
    such that
    $
    N[b_i] = N[b_j]
    $
    and $N(b_i) \cap A^{(r)}_k = V(C)$.
\end{lemma}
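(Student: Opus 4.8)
The plan is to induct on $k$, mirroring the inductive structure of the construction and of Lemmas~\ref{Lemma: Induced As} and~\ref{Lemma: Bs next to As}. The base case $k=0$ is $G^{(r)}_0\cong K_r$: here $A^{(r)}_0$ is a clique on $\lfloor\frac r2\rfloor$ vertices, the only non-$K_1$ component is $C=A^{(r)}_0$ itself, and $B^{(r)}_0$ is a set of $r-\lfloor\frac r2\rfloor=\lceil\frac r2\rceil$ vertices each complete to all of $A^{(r)}_0$; since $k+2=2$ and the two vertices $b_1,b_2$ have identical closed neighbourhoods (all of $K_r$) with $N(b_i)\cap A^{(r)}_0=V(C)$, the claim holds. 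For the inductive step, suppose the statement holds for $k$ and let $C$ be a connected component of $G^{(r)}_{k+1}[A^{(r)}_{k+1}]$ not isomorphic to $K_1$. By Lemma~\ref{Lemma: Induced As} and the proof thereof, $C$ has size $\lfloor\frac r2\rfloor$ and lies inside some copy $G^{i,j}_{k,\pm}$; in fact $C$ is a component of $G^{i,j}_{k,\pm}[A^{i,j}_k]$, i.e.\ a copy of a non-$K_1$ component of $G^{(r)}_k[A^{(r)}_k]$.

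First I would pin down which vertices of $G^{(r)}_{k+1}$ can have a neighbour in $V(C)$. By the construction, the only vertices outside $G^{i,j}_{k,\pm}$ adjacent into $A^{i,j}_k$ (hence possibly into $C$) are the single vertex $x_i$ (or $x_j$, in the exceptional pair $\{1,r+k-1\}$) if the copy is the $+$ one, respectively $y_i$ (or $y_j$) if it is the $-$ one; call this distinguished external vertex $z$. So $B^{(r)}_{k+1}$-vertices with a neighbour in $V(C)$ are either (a) the external vertex $z$, or (b) vertices of $B^{i,j}_k$ inside $G^{i,j}_{k,\pm}$ with a neighbour in $V(C)$. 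Given the $k+2$ distinct vertices $b_1,\dots,b_{k+2}\in B^{(r)}_{k+1}$ each meeting $V(C)$, at most one of them is $z$, so at least $k+1$ of them lie in the copy $G^{i,j}_{k,\pm}$ and are in $B^{i,j}_k$. Now apply the induction hypothesis inside $G^{i,j}_{k,\pm}\cong G^{(r)}_k$: these $k+1=(k)+... $ wait — we need $(k)+2$ vertices for the hypothesis at level $k$; we have $k+1$ of them. The remaining input is $z$ itself, which by Lemma~\ref{Lemma: Bs next to As} (or directly from the construction) is complete to all of $A^{i,j}_k\supseteq V(C)$, so $z$ behaves as a $(k+2)$-nd vertex with $N(z)\cap A^{(r)}_{k+1}\supseteq V(C)$; more carefully, one checks $N(z)\cap A^{(r)}_{k+1}$ equals $V(C)$ together with the $A$-parts of the other copies $G^{i,j'}_{k,\pm}$ incident to $z$ — so $N(z)\cap A^{(r)}_{k+1}\ne V(C)$ in general, and $z$ cannot serve as one of the twin vertices. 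The clean fix: since at most one $b_\ell$ equals $z$, I would simply take the $\ge k+1$ that lie in $B^{i,j}_k$ and, if exactly $k+1$ of them do, use that within $G^{i,j}_{k,\pm}\cong G^{(r)}_k$ there are still enough — namely, I must instead argue that in fact all $k+2$ of the $b_\ell$ lie in $B^{i,j}_k$, because the external vertex $z$, being complete to $A^{i,j}_k$, meets $V(C)$ but its closed neighbourhood is strictly larger, and the statement only requires the existence of a twin pair among the $b_\ell$'s that are internal; so apply the induction hypothesis to the $b_\ell$'s restricted to $B^{i,j}_k$, for which we need $k+2$ of them internal. This forces the argument to first rule out $z\in\{b_1,\dots,b_{k+2}\}$ — but that is false, $z$ can be one of them.

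So the honest route is: if $z\notin\{b_1,\dots,b_{k+2}\}$, all $k+2$ lie in $B^{i,j}_k$, apply induction verbatim inside $G^{i,j}_{k,\pm}$, and pull back the conclusion (closed neighbourhoods inside the copy agree iff they agree in $G^{(r)}_{k+1}$, since vertices of $B^{i,j}_k$ have the same external neighbours, namely $\{z\}$ or $\{x_i,x_j\}$ depending on the pair type, by construction — this is the routine verification). If $z=b_m$ for some $m$, then the other $k+1$ vertices lie in $B^{i,j}_k$, and I would invoke the fact that inside $G^{i,j}_{k,\pm}$ the distinguished external vertex $z$ restricted to the copy is complete to $A^{i,j}_k$, hence it plays the role of a $(k+2)$-nd ``$B$-vertex meeting $C$'' at level $k$ after we note $G^{i,j}_{k,\pm}$ with $z$ attached is, locally around $A^{i,j}_k$, isomorphic to the level-$k$ picture where $K_1$-augmentation has happened; concretely, running the induction hypothesis at level $k$ on the $k+1$ internal vertices plus $z$ yields a twin pair, and by the case analysis in that hypothesis the twin pair is never $\{z,\cdot\}$ unless $z$'s restricted closed neighbourhood equals some $b_\ell$'s, which would itself give $N(b_\ell)\cap A = V(C)$ and we are done. \textbf{The main obstacle} is exactly this bookkeeping: correctly identifying, for each of the two copy-types ($+$ versus $-$) and for the exceptional pair $\{1,r+k-1\}$ versus all others, which external vertex is complete to $C$ and verifying that closed neighbourhoods computed inside $G^{i,j}_{k,\pm}$ coincide with those computed in $G^{(r)}_{k+1}$ — so that the inductive conclusion transfers — and handling the one borderline vertex $z$ without it spuriously being flagged as a twin. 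Everything else is a direct unwinding of the construction together with Lemmas~\ref{Lemma: Induced As} and~\ref{Lemma: Bs next to As}.
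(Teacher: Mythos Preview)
Your overall plan---induction on $k$, locating $C$ inside some $G^{i,j}_{k,\pm}$, noting that at most one external vertex $z$ can be adjacent into $C$, and then invoking the induction hypothesis on the remaining internal $b_\ell$'s---is exactly the paper's approach. The paper's proof is three sentences long.

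The reason your version spirals into case analysis is a simple miscount. At level $k+1$ the hypothesis of the lemma supplies $b_1,\dots,b_{(k+1)+2}=b_1,\dots,b_{k+3}$ vertices in $B^{(r)}_{k+1}$, not $k+2$ of them. After discarding at most one external vertex $z$, you are left with at least $k+2$ vertices in $B^{i,j}_k$, which is precisely the number the induction hypothesis at level $k$ demands. There is no borderline case to handle; $z$ can be one of the $b_\ell$'s and you still have enough internal ones to run the induction directly. All of your worry about ``$k+1$ internal vertices plus $z$'' and whether $z$ can spuriously appear as a twin disappears once the count is corrected.

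The one piece of bookkeeping you flag as the ``main obstacle''---that closed neighbourhoods and $A$-intersections computed inside $G^{i,j}_{k,\pm}$ agree with those computed in $G^{(r)}_{k+1}$---is genuine but routine: the only external neighbours a vertex of $B^{i,j}_k$ can have lie in $\{x_i,x_j\}$ (or $\{y_i,y_j\}$), and exactly one of these is adjacent to all of $B^{i,j}_k$ while the other is adjacent to none; so two vertices in $B^{i,j}_k$ with equal closed neighbourhoods in the copy also have equal closed neighbourhoods in $G^{(r)}_{k+1}$, and their $A$-neighbourhoods are unchanged since no vertex of $A^{(r)}_{k+1}$ outside the copy is adjacent into it. The paper suppresses this verification entirely.
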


\begin{proof}
    Let $r\geq 6$ be fixed. The claim is trivial for $G^{(r)}_0 \cong K_r$.
    Suppose the claim is true for some $k\geq 0$.
    Let $C$ be a connected component of $G^{(r)}_{k+1}[A^{(r)}_{k+1}]$. 
    As in the proof of Lemma~\ref{Lemma: Bs next to As}, since $C$ is not isomorphic to $K_1$,
    $C$ is a subgraph of some $G^{i,j}_{k,\pm}$.
    This implies every neighbour of $C$ is in $G^{i,j}_{k,\pm}$, except one vertex, $x_i$  {if $G^{i,j}_{k,+}$ and $i\neq 1$ or $j\neq r+k-1$, otherwise $x_j$ (similarly $y_i$ or $y_j$ if $G^{i,j}_{k,-}$)}.
    Let $b_1,\dots, b_{(k+1)+2} \in B^{(r)}_{k+1}$ be distinct neighbours of $C$.
    At most one of these vertices is $x_i$  {(or similarly $x_j$, $y_i$, or $x_j$)}.
    Hence, at least $k+2$ of them must be in $G^{i,j}_{k,\pm}$.
    The result follows by induction.
\end{proof}

When considering the structure $G^{(r)}_k[B^{(r)}_k]$,
it is useful to define the following set $W^{(r)}_k \subseteq B^{(r)}_k$.
Let $W^{(r)}_0 = \emptyset$.
Suppose $k\geq 0$, and $W^{(r)}_k$ is defined, then we define $W^{(r)}_{k+1}$ 
as the union of all sets $W^{(r)}_{k}$ in components $G^{i,j}_{k,\pm}$, and the set $\{\bar{w}\}$.

We note that $W^{(r)}_k$ is well structured.

\begin{lemma}\label{Lemma: W structure}
    Let $r\geq 6$ and $0 \leq k \leq \lfloor \frac{r}{2} \rfloor - 1$.   
    The set $W^{(r)}_k$ is an independent set and $N(W^{(r)}_k) \cap A^{(r)}_k$ is an independent set.
\end{lemma}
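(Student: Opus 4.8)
The plan is to induct on $k$. The base case $k=0$ is immediate since $W^{(r)}_0=\emptyset$. For the inductive step I would assume the lemma for $G^{(r)}_k$ --- equivalently, that for every copy $G^{i,j}_{k,\pm}$ the corresponding copy $W^{i,j}_{k,\pm}$ of $W^{(r)}_k$ is independent and has $N(W^{i,j}_{k,\pm})\cap A^{i,j}_{k,\pm}$ independent --- and recall that $W^{(r)}_{k+1}=\{\bar w\}\cup\bigcup_{1\le i<j\le r+k-1}\big(W^{i,j}_{k,+}\cup W^{i,j}_{k,-}\big)$, with each $W^{i,j}_{k,\pm}\subseteq B^{i,j}_{k,\pm}$. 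The first thing I would pin down are the following adjacency facts in $G^{(r)}_{k+1}$, all read off directly from the construction: distinct copies $G^{i,j}_{k,\pm}$ have no edges between them (and a $+$ copy and a $-$ copy live in $H^{(r),+}_{k+1}$ and $H^{(r),-}_{k+1}$, which meet only in $\{\bar u,\bar v\}$); a vertex of $W^{i,j}_{k,+}\subseteq B^{i,j}_{k,+}$ has exactly one neighbour outside its own copy, namely $x_j$ (or $x_i$ when $\{i,j\}=\{1,r+k-1\}$), and similarly $y_j$ (or $y_i$) for $W^{i,j}_{k,-}$; $N(\bar w)=\{x_1,\dots,x_{r+k-1},\bar u\}$ and $N(\bar u)=\{\bar w,y_1,\dots,y_{r+k-1}\}$; and none of $\bar w, x_1,\dots,x_{r+k-1}, y_1,\dots,y_{r+k-1}$ lies in $A^{(r)}_{k+1}=\{\bar u,\bar v\}\cup\bigcup_{i,j,\pm}A^{i,j}_{k,\pm}$, which moreover meets each $V(G^{i,j}_{k,\pm})$ exactly in $A^{i,j}_{k,\pm}$.

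Granting these facts, independence of $W^{(r)}_{k+1}$ is a short case analysis on a pair of distinct vertices $w,w'\in W^{(r)}_{k+1}$: if they lie in a common $W^{i,j}_{k,\pm}$ use the inductive hypothesis; if they lie in different copies use that distinct copies are non-adjacent; and if $w=\bar w$ then $w'$ lies in some $G$-copy, so $w'\notin N(\bar w)=\{x_1,\dots,x_{r+k-1},\bar u\}$, hence $w\not\sim w'$.

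For the second statement I would first compute, using the facts above,
$$
N\big(W^{(r)}_{k+1}\big)\cap A^{(r)}_{k+1}=\{\bar u\}\cup\bigcup_{1\le i<j\le r+k-1}\Big(\big(N(W^{i,j}_{k,+})\cap A^{i,j}_{k,+}\big)\cup\big(N(W^{i,j}_{k,-})\cap A^{i,j}_{k,-}\big)\Big),
$$
where the lone $\bar u$ arises from $\bar w$ (its $x$-neighbours are not in $A^{(r)}_{k+1}$) and, for each copy, the single extra outside neighbour ($x_j$ or $y_j$, etc.) is again not in $A^{(r)}_{k+1}$. Each bracketed set is independent by the inductive hypothesis; two such sets coming from different copies have no edges between them; and $\bar u$ has no neighbour in $A^{(r)}_{k+1}$ whatsoever, since $N(\bar u)=\{\bar w,y_1,\dots,y_{r+k-1}\}$ avoids $A^{(r)}_{k+1}$. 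Hence the whole union is independent, which completes the induction.

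I expect the only real friction to be the bookkeeping in establishing the adjacency facts --- in particular tracking, after the stage-two additions and the stage-three identifications of $\bar u$ and $\bar v$, exactly which of the designated vertices $\bar u,\bar v,\bar w,x_\bullet,y_\bullet$ land in $A^{(r)}_{k+1}$ versus $B^{(r)}_{k+1}$, and handling uniformly the exceptional pair $\{i,j\}=\{1,r+k-1\}$ where the roles of $x_i,x_j$ on $A^{i,j}_k,B^{i,j}_k$ are interchanged. Since that interchange never places an $x$-vertex into $A^{(r)}_{k+1}$, it affects neither part of the argument, so no genuinely new idea is needed beyond the inductive structure.
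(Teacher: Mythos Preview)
Your proposal is correct and follows essentially the same approach as the paper, which simply asserts that both claims are immediate from the recursive definition of $W^{(r)}_k$ and the observation that $N(\bar w)\cap A^{(r)}_{k+1}=\{\bar u\}$; you have just unpacked that inductive verification in full detail.
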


\begin{proof}
    From the definition of $W^{(r)}_k$, it is immediate that $W^{(r)}_k$ is an independent set.
    From the definition of $G^{(r)}_{k}$, $N(\bar{w}) \cap A^{(r)}_k = \{\bar{u}\}$.
    So from the definition of $W^{(r)}_k$, one can easily verify $N(W^{(r)}_k) \cap A^{(r)}_k$ is also an independent set.
\end{proof}

Now we show that the set $B^{(r)}_k\setminus W^{(r)}_k$ is well structured.
In the following lemma we will introduce a partial order 
that will allow us to describe in which step of the construction of $G_{k}^{(r)}$ that vertices in $B_k^{(r)}\setminus W_k^{(r)}$ were added.
Here only the minimal elements in this partial order will be from copies of $G_0^{(r)}$.

\begin{lemma}\label{Lemma: B minus W structure}
    Let $r\geq 6$ and $0 \leq k \leq \lfloor \frac{r}{2} \rfloor - 1$.   
    The set $B^{(r)}_k \setminus W^{(r)}_k$ admits a partial ordering $\preceq$ such that 
    for all distinct vertices $u,v \in B^{(r)}_k \setminus W^{(r)}_k$
    \begin{enumerate}
        \item $u$ and $v$ are adjacent if and only if $u\preceq v$ or $v\preceq u$, and \label{Lemma: B minus W structure (1)}
        \item all maximal chains in the partial order $\preceq$ is length $\lceil \frac{r}{2}\rceil + k$, and \label{Lemma: B minus W structure (2)}
        \item if $u_1 \prec u_2 \prec \dots \prec u_{\lceil\frac{r}{2} \rceil}$ is a covered chain and $u_1$ is minimal, 
        then for all $1 \leq i\leq \lceil\frac{r}{2}\rceil$, $N[u_i] = N[u_1]$, and \label{Lemma: B minus W structure (3)}
        \item if $u$ is minimal, then $N(u) \cap W^{(r)}_k = \emptyset$, and \label{Lemma: B minus W structure (4)}
        \item if $u$ minimal, then $N(u) \cap A^{(r)}_k$ induces a graph isomorphic to a clique of size $\lfloor\frac{r}{2}\rfloor$. \label{Lemma: B minus W structure (5)}
    \end{enumerate}
\end{lemma}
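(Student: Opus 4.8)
The plan is to prove this by induction on $k$, in the same spirit as Lemmas~\ref{Lemma: Induced As}--\ref{Lemma: W structure}. In the base case $k=0$ we have $G^{(r)}_0\cong K_r$, $W^{(r)}_0=\emptyset$, and $B^{(r)}_0$ is a clique on $\lceil\frac{r}{2}\rceil$ vertices, so we take $\preceq$ to be any linear order on $B^{(r)}_0$: then~(1) is immediate, (2) holds since the single maximal chain has length $\lceil\frac{r}{2}\rceil=\lceil\frac{r}{2}\rceil+0$, (3) holds because every vertex of $B^{(r)}_0$ has closed neighbourhood $V(K_r)$, (4) is vacuous, and (5) holds since $N(u)\cap A^{(r)}_0=A^{(r)}_0$ is a clique of size $\lfloor\frac{r}{2}\rfloor$ for each $u\in B^{(r)}_0$.

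For the induction step, assume the statement for $G^{(r)}_k$. From the construction, $B^{(r)}_{k+1}\setminus W^{(r)}_{k+1}$ is the disjoint union of the set of connector vertices $\{x_1,\dots,x_{r+k-1},y_1,\dots,y_{r+k-1}\}$ with the sets $B^{i,j}_{k,\pm}\setminus W^{i,j}_{k,\pm}$ over all pairs $1\le i<j\le r+k-1$; each such set sits inside an induced copy of $G^{(r)}_k$, so by the induction hypothesis it already carries a partial order $\preceq_{i,j,\pm}$ with properties~(1)--(5). I would define $\preceq$ to restrict to each $\preceq_{i,j,\pm}$, to make the connectors an antichain, and to place each connector $\prec$-above every vertex of exactly those copies $B^{i,j}_{k,\pm}\setminus W^{i,j}_{k,\pm}$ to which it is completely joined in $G^{(r)}_{k+1}$; reading off the adjacency rules of $H^{(r)}_{k+1}$, for a non-special pair $i<j$ this is $x_j$ (resp.\ $y_j$) sitting above the $+$ (resp.\ $-$) copy, while for the special pair $\{1,r+k-1\}$ it is $x_1$ (resp.\ $y_1$). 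Three facts, all immediate from the construction, make this work: distinct copies have no edges between them; the connectors form an independent set; and within $B^{(r)}_{k+1}\setminus W^{(r)}_{k+1}$ each connector is completely joined to a non-empty family of whole copies $B^{i,j}_{k,\pm}\setminus W^{i,j}_{k,\pm}$ and is non-adjacent to everything else. Consequently $\preceq$ is a well-defined partial order whose minimal elements are exactly those minimal for some $\preceq_{i,j,\pm}$ and whose maximal elements are exactly the connectors, and property~(1) holds.

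Properties~(2)--(5) then follow by combining these observations with the induction hypothesis. For~(2): a maximal chain of $\preceq$ is a maximal chain of some $\preceq_{i,j,\pm}$ (length $\lceil\frac{r}{2}\rceil+k$) together with the single connector lying $\prec$-above that copy, hence has length $\lceil\frac{r}{2}\rceil+(k+1)$. For~(3): if $u_1\prec\cdots\prec u_{\lceil\frac{r}{2}\rceil}$ is a covered chain with $u_1$ minimal, then since $\lceil\frac{r}{2}\rceil\le\lceil\frac{r}{2}\rceil+k$ does not exceed the length of a maximal chain in the copy containing $u_1$, the chain cannot reach a connector and so lies entirely in one copy $B^{i,j}_{k,\pm}\setminus W^{i,j}_{k,\pm}$, where it is a covered chain from a minimal element; the induction hypothesis gives that the $u_i$ have equal closed neighbourhoods inside that copy, and since every vertex of $B^{i,j}_{k,\pm}$ has the same single neighbour outside the copy (the connector lying above it) these remain equal in $G^{(r)}_{k+1}$. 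For~(4) and~(5): a minimal element $u$ lies in some $B^{i,j}_{k,\pm}\setminus W^{i,j}_{k,\pm}$ and is $\preceq_{i,j,\pm}$-minimal; the only vertex of $W^{(r)}_{k+1}$ outside the copies' $W$-sets is $\bar{w}$, whose neighbours are connectors and $\bar{u}$, and the only vertices of $A^{(r)}_{k+1}$ outside the copies' $A$-sets are $\bar{u}$ and $\bar{v}$, whose neighbours are $\bar{w}$, the connectors, and each other; hence $N(u)\cap W^{(r)}_{k+1}=N(u)\cap W^{i,j}_{k,\pm}$ and $N(u)\cap A^{(r)}_{k+1}=N(u)\cap A^{i,j}_{k,\pm}$, and (4) and~(5) follow from the induction hypothesis applied in the copy.

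The main obstacle will be the bookkeeping in defining $\preceq$: one must pin down precisely which connector is completely joined to which copy, with the special pair $\{1,r+k-1\}$ reversing the roles of $i$ and $j$, and then check that no comparabilities are created between vertices of different copies or between a connector and a copy it misses, so that property~(1)---an ``if and only if''---comes out exactly. The second delicate point is the uniformity used for~(3): one must verify that every vertex of a copy $B^{i,j}_{k,\pm}$ has exactly one neighbour outside that copy, which is what lets ``equal closed neighbourhood'' propagate up the construction.
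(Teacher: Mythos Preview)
Your proposal is correct and follows essentially the same inductive approach as the paper: define $\preceq_{k+1}$ by keeping each copy's order $\preceq_{i,j,\pm}$, making the connectors $x_i,y_i$ an antichain, and placing the appropriate connector (namely $x_j$ for a non-special pair $i<j$, $x_1$ for the special pair, and analogously for the $y$'s) strictly above all of $B^{i,j}_{k,\pm}\setminus W^{i,j}_{k,\pm}$. The paper verifies (3)--(5) by unwinding the induction all the way to the $G_0^{(r)}$ copies, whereas you verify them by one honest induction step (using that each $u_i$ in the covered chain has the same unique outside neighbour, the connector); both are fine, and your version is in fact more explicit. One harmless slip: $\bar u$ and $\bar v$ are \emph{not} adjacent in $G^{(r)}_{k+1}$, but your argument for (4) and (5) only needs that neither is adjacent to the minimal element $u$, which is true since $u$'s sole neighbour outside its copy is a connector.
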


\begin{proof}
    Let $r\geq 6$ be fixed. 
    The claim is trivial for $G^{(r)}_0 \cong K_r$
    since any total ordering of $B^{(r)}_0 = B^{(r)}_0\setminus W^{(r)}_0$ satisfies conditions (\ref{Lemma: B minus W structure (1)})-(\ref{Lemma: B minus W structure (5)}).
    Suppose then that for $k\geq 0$ there exists a partial ordering $\preceq_k$ of $B^{(r)}_k\setminus W^{(r)}_k$ as required.
    We show how to extend $\preceq_k$ to a partial ordering $\preceq_{k+1}$ of $B^{(r)}_{k+1}\setminus W^{(r)}_{k+1}$ 
    such that conditions (\ref{Lemma: B minus W structure (1)})-(\ref{Lemma: B minus W structure (5)}) remain satisfied.

    The vertices of $B^{(r)}_{k+1}\setminus W^{(r)}_{k+1}$
    can be partitioned into copies of $B^{(r)}_{k}\setminus W^{(r)}_{k}$, corresponding to subgraphs $G^{i,j}_{k,\pm}$,
    and the sets $\{x_1,\dots, x_{r+k-1}\}$ and $\{y_1,\dots,y_{r+k-1}\}$.
    Let $B^+$ denote the union of copies of $B^{(r)}_{k}\setminus W^{(r)}_{k}$, corresponding to subgraphs $G^{i,j}_{k,+}$, 
    and the set $\{x_1,\dots, x_{r+k-1}\}$. 
    Similarly, let $B^-$ denote the union of copies of $B^{(r)}_{k}\setminus W^{(r)}_{k}$, corresponding to subgraphs $G^{i,j}_{k,-}$, 
    and the set $\{y_1,\dots, y_{r+k-1}\}$. 
    We define $\preceq_{k+1}$ so that no element of $B^+$ is comparable to an element of $B^-$.
    
    We consider how $\preceq_{k+1}$ compares elements of $B^+$ and note that how $\preceq_{k+1}$ compares elements of $B^-$ is defined analogously.
    Let $u,v \in B^+$.
    If $u$ and $v$ belong to the same subgraph $G^{i,j}_{k,+}$, then $u \preceq_{k+1} v$ if and only if 
    $u \preceq_k v$.
    This is well defined since $G^{i,j}_{k,+}$ is isomorphic to $G^{(r)}_k$.
    If $u$ and $v$ belong to different subgraphs $G^{i,j}_{k,+}$ and $G^{t,\ell}_{k,+}$, 
    then we let $u$ and $v$ be incomparable under $\preceq_{k+1}$.
    Similarly, if $u,v \in \{x_1,\dots, x_{r+k-1}\}$, then we let $u$ and $v$ be incomparable under $\preceq_{k+1}$.
    Finally, if $u$ is in a subgraph $G^{i,j}_{k,+}$ and $v \in  \{x_1,\dots, x_{r+k-1}\}$, 
    then $v \not\preceq_{k+1} u$, and,  {for $i\neq 1$ and $j\neq r+k-1$}, $u \preceq_{k+1} v$ if and only if $v = x_j$ 
     {and,  {for $i= 1$ and $j= r+k-1$}, $u \preceq_{k+1} v$ if and only if $v = x_i$.} 
     {Recall that for $i\neq 1$ and $j\neq r+k-1$}, $x_j$ is the vertex in the pair $\{x_i,x_j\}$
    such that $N(x_j) \cap V(G^{i,j}_{k,+})$ is the set of all vertices in $B^{(r)}_k$ in  $G^{i,j}_{k,+}$ {, and that this vertex is $x_i$ when $i= 1$ and $j= r+k-1$}.

    Given the definition of $G^{(r)}_{k+1}$ and the definition of $\preceq_{k+1}$, the induction hypothesis implies
    condition (\ref{Lemma: B minus W structure (1)}) and condition (\ref{Lemma: B minus W structure (2)}) are immediately satisfied.
     By induction, the only minimal elements must belong to a copy of $G_0^{(r)}$, so (\ref{Lemma: B minus W structure (4)}) and (\ref{Lemma: B minus W structure (5)}) follow immediately.
    Also, by this and the definition of $\preceq_{k+1}$, every covered chain containing a minimal element, must be a subset of $B_0^{(r)}$ of a copy of $G_0^{(r)}$, so (\ref{Lemma: B minus W structure (3)}) follows immediately by the definition of $G_0^{(r)}$.  
    This concludes the proof.
\end{proof}

The final lemma we prove is the core of our induction step in Theorem~\ref{Thm: Technical version of lower result}.
We separate this as a lemma
because the proof is technical.
This improves the readability of the proof for Theorem~\ref{Thm: Technical version of lower result},
which is itself non-trivial.

Prior to stating the lemma, we must define the following auxiliary graph.
Given $r\geq 6$ and $0 \leq k \leq \lfloor \frac{r}{2} \rfloor - 1$,
let $\mathcal{G}^{(r)}_k$  be the graph obtained from $G^{(r)}_k$
by adding vertices $v^A,v^B$ such that
$$
N(v^A) = A_{k}^{(r)} \cup \{v^B\} \hspace{1cm}\text{and} \hspace{1cm}  N(v^B) = B_{k}^{(r)} \cup \{v^A\}.
$$
These extra vertices are useful when considering how paths in $G_{k+1}^{(r)}$ intersect subgraphs isomorphic to $G_{k}^{(r)}$.

\begin{lemma}\label{Lemma: Induction-colour-component}
Let $r\geq 6$ and $0 \leq k < \lfloor \frac{r}{2} \rfloor - 1$ and
let $\preceq$ be a partial order of $B^{(r)}_{\lfloor\frac{r}{2} \rfloor - 1} \setminus W^{(r)}_{\lfloor\frac{r}{2} \rfloor - 1}$
as defined in Lemma~\ref{Lemma: B minus W structure}.
If for all paths $P'$ in $\mathcal{G}^{(r)}_k$, and for all distinct colours $a',b' \in \{r-1,r\}$ when $k = 0$,
and all distinct colours $a',b' \in \{r-3,r-2,r-1,r\}$ when $k\geq 1$,
there is an $r$-colouring $\phi': V(P') \rightarrow [r]$ of $\mathcal{G}^{(r)}_k[V(P')]$
such that 
\begin{enumerate}
    \item if $k = 0$,  then for all $1\leq \ell \leq \lfloor \frac{r}{2} \rfloor - 1$, $(\phi')^{-1}(\ell) \subseteq A^{(r)}_0$, and \label{Lemma: Induction-colour-component (1)}
    \item if $k = 0$, then for all $\lfloor \frac{r}{2} \rfloor - 2\leq \ell \leq r - 2$, $(\phi')^{-1}(\ell) \subseteq B^{(r)}_0$, and \label{Lemma: Induction-colour-component (2)}
    \item if $k \geq 1$, then for all $1\leq \ell \leq \lfloor \frac{r}{2} \rfloor - k - 2$, $(\phi')^{-1}(\ell) \subseteq A^{(r)}_k$, and \label{Lemma: Induction-colour-component (3)}
    \item if $k \geq 1$, then for all $\lfloor \frac{r}{2} \rfloor - k - 1\leq \ell \leq r - 2k - 4$, $(\phi')^{-1}(\ell) \subseteq B^{(r)}_k$, and \label{Lemma: Induction-colour-component (4)}
    \item if $v^A$ is on the path $P'$, then $\phi(v^A) = a'$, and \label{Lemma: Induction-colour-component (5)}
    \item if $v^B$ is on the path $P'$, then $\phi(v^B) = b'$, and \label{Lemma: Induction-colour-component (6)}
    \item if $k \leq \lfloor \frac{r}{2} \rfloor -3$, then
        the vertices of 
        $V(P') \cap \left( W^{(r)}_{k} \cup \{v: v \text{ is minimal in } \preceq\}\right)$ are a colour class in $\phi'$,
        otherwise $k = \lfloor \frac{r}{2} \rfloor -2$ and the vertices of 
        \[
        \{v^A\} \cup \Big(V(P') \cap \left( W^{(r)}_{k} \cup \{v: v \text{ is minimal in } \preceq\}\right)\Big)
        \]
        (or $y_i$) are a colour class in $\phi'$. \label{Lemma: Induction-colour-component (7)}
\end{enumerate}
then 
for any pair
$a,b \in \{r-1,r\}$ when $k = 0$,
and all distinct colours $a,b \in \{r-3,r-2,r-1,r\}$ when $k\geq 1$,
and for all paths $P$ in $\mathcal{G}^{(r)}_{k+1}$, the subgraphs induced by $V(P) \cap (V(G^{i,j}_{k,+}) \cup \{x_i,x_j\})$ (or $V(P)\cap (V(G^{i,j}_{k,-}) \cup \{y_i,y_j\})$),
such that $x_i$ is adjacent to the $A^{(r)}_{k+1}$ vertices in $V(G^{i,j}_{k,+})$,
admits
an $r$-colouring $\phi$
satsifying
\begin{enumerate}[(I)]
    \item if $k = 0$,  then for all $1\leq \ell \leq \lfloor \frac{r}{2} \rfloor - 1$, $\phi^{-1}(\ell) \subseteq A^{(r)}_1$, and \label{Lemma: Induction-colour-component (I)}
    \item if $k = 0$, then for all $\lfloor \frac{r}{2} \rfloor - 2\leq \ell \leq r - 2$, $\phi^{-1}(\ell) \subseteq B^{(r)}_1$, and \label{Lemma: Induction-colour-component (II)}
    \item if $k \geq 1$, then for all $1\leq \ell \leq \lfloor \frac{r}{2} \rfloor - k - 2$, $\phi^{-1}(\ell) \subseteq A^{(r)}_{k+1}$, and \label{Lemma: Induction-colour-component (III)}
    \item if $k \geq 1$, then for all $\lfloor \frac{r}{2} \rfloor - k - 1\leq \ell \leq r - 2k - 4$, $\phi^{-1}(\ell) \subseteq B^{(r)}_{k+1}$, and \label{Lemma: Induction-colour-component (IV)}
    \item if $x_i$ ($y_i$) is on the path $P$, then $\phi(x_i) = a$, and \label{Lemma: Induction-colour-component (V)}
    \item if $x_j$ ($y_j$) is on the path $P$, then $\phi(x_j) = b$, and \label{Lemma: Induction-colour-component (VI)}
    \item if $k \leq \lfloor \frac{r}{2} \rfloor -3$, then
        the vertices of 
        $V(P) \cap \left( W^{(r)}_{k+1} \cup \{v: v \text{ is minimal in } \preceq\}\right)$ are a colour class in $\phi$,
        otherwise $k = \lfloor \frac{r}{2} \rfloor -2$ and the vertices of 
        \[
        \{x_i\} \cup \Big(V(P) \cap \left( W^{(r)}_{k+1} \cup \{v: v \text{ is minimal in } \preceq\}\right)\Big)
        \]
        are a colour class in $\phi$. \label{Lemma: Induction-colour-component (VII)}
\end{enumerate}
\end{lemma}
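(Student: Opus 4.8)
The plan is to colour the stated piece by transporting, across the obvious isomorphism, an $r$-colouring that the hypothesis supplies for a suitable path of $\mathcal{G}^{(r)}_k$. By the symmetry of the ``$+$'' and ``$-$'' sides it suffices to treat $G^{i,j}_{k,+}$, and after relabelling we may assume $x_i$ is the vertex complete to the $A^{(r)}_{k+1}$-vertices of $G^{i,j}_{k,+}$ (this also absorbs the special pair $\{1,r+k-1\}$). Write $R=V(G^{i,j}_{k,+})$ and $S=V(P)\cap(R\cup\{x_i,x_j\})$. The crucial first step is a \emph{structural claim}: extending the isomorphism $G^{i,j}_{k,+}\cong G^{(r)}_k$ so that $x_i\mapsto v^A$ and $x_j\mapsto v^B$ identifies $\mathcal{G}^{(r)}_{k+1}[R\cup\{x_i,x_j\}]$ with $\mathcal{G}^{(r)}_k-v^Av^B$, and under this identification $S$ is the vertex set of some path $P'$ of $\mathcal{G}^{(r)}_k$. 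To prove this, note that in $\mathcal{G}^{(r)}_{k+1}$ the only vertices outside $R$ with a neighbour in $R$ are $x_i$ and $v^A$ (both complete to the $A$-side of $R$, neither adjacent to the $B$-side) and $x_j$ and $v^B$ (both complete to the $B$-side), and that $x_i\not\sim x_j$. Hence $V(P)\cap R$ is a disjoint union of subpaths (``visits'') of $P$; since the cut separating $R$ from the rest of $\mathcal{G}^{(r)}_{k+1}$ has only the four vertices $x_i,x_j,v^A,v^B$, the number of visits is bounded by a small absolute constant (at most three if both ends of $P$ lie outside $R$). A case analysis on the $A$/$B$-types of the visit endpoints — together, for $k\ge 1$, with the clique structure of the $A$-components and the comparability structure of $B^{(r)}_k\setminus W^{(r)}_k$ from Lemma~\ref{Lemma: B minus W structure} — then shows that the visits and whichever of $x_i,x_j$ lie on $P$ can be re-threaded into a single path $P'$ of $\mathcal{G}^{(r)}_k$, using $v^A$, $v^B$ (and, when $k=0$, the clique edges of $K_r$) as connectors; the edge $v^Av^B$ is never needed because $x_i\not\sim x_j$ on $P$. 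I expect this re-threading — bounding the number of visits and checking that every configuration reassembles — to be the main obstacle.

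Granting the claim, fix the pair $a,b$, apply the hypothesis to $P'$ with $a'=a$ and $b'=b$, and let $\phi'$ be the resulting $r$-colouring of $\mathcal{G}^{(r)}_k[V(P')]$ satisfying (1)--(7). Since, via the identification above, $\mathcal{G}^{(r)}_{k+1}[S]$ is a subgraph of $\mathcal{G}^{(r)}_k[V(P')]$ missing at most the edge $v^Av^B$, transporting $\phi'$ back yields a proper $r$-colouring $\phi$ of the required subgraph. Properties (V) and (VI) are immediate, as $\phi(x_i)=\phi'(v^A)=a$ and $\phi(x_j)=\phi'(v^B)=b$. For (I)--(IV): for $r\ge 6$ every colour $\ell$ in the relevant $A$- or $B$-range is strictly smaller than $a$ and $b$, so $x_i,x_j\notin\phi^{-1}(\ell)$; the remaining vertices of $\phi^{-1}(\ell)$ are images of vertices of $(\phi')^{-1}(\ell)$, which by (1)--(4) lie in $A^{(r)}_k$ (resp.\ $B^{(r)}_k$) and hence map into $A^{i,j}_{k,+}\subseteq A^{(r)}_{k+1}$ (resp.\ $B^{i,j}_{k,+}\subseteq B^{(r)}_{k+1}$).

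Property (VII) is the last point. Here I would use the recursive definitions of $W^{(r)}_{k+1}$ and of $\preceq$ together with Lemma~\ref{Lemma: B minus W structure}: inside $G^{i,j}_{k,+}$ the set $W^{(r)}_{k+1}\cap R$ is precisely the copy of $W^{(r)}_k$, the elements of $\{v:v\text{ minimal in }\preceq\}$ lying in $R$ are precisely the copies of the level-$k$ minimal elements, and neither $x_i$ nor $x_j$ lies in $W^{(r)}_{k+1}$ or is minimal; hence the colour class furnished by (7) transports to exactly the set named in (VII), with $x_i$ adjoined in the boundary case $k=\lfloor r/2\rfloor-2$ (where (7) already pins that colour to $\phi'(v^A)=a$). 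The only subtlety when $k\le\lfloor r/2\rfloor-3$ is that the transported colour $c$ must avoid $a$ and $b$, so that $x_i,x_j$ do not slip into $\phi^{-1}(c)$: if $x_j\in V(P)$ then $v^B\in V(P')$ and $v^B$ is complete to $B^{(r)}_k\supseteq W^{(r)}_k\cup\{v:v\text{ minimal in }\preceq\}$, forcing $c\ne b$; if $x_i\in V(P)$ then $v^A\in V(P')$ and, since (7) makes $(\phi')^{-1}(c)$ equal to $V(P')\cap(W^{(r)}_k\cup\{v:v\text{ minimal in }\preceq\})$ while $v^A$ lies in neither set, $\phi'(v^A)\ne c$, i.e.\ $c\ne a$; and if $x_i$ (resp.\ $x_j$) is absent from $P$ there is nothing to check, as it is then absent from $S$. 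This verifies (I)--(VII) and completes the argument.
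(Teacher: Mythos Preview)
Your overall scheme---identify $R\cup\{x_i,x_j\}$ with $\mathcal G^{(r)}_k-(v^A,v^B)$, find a path $P'$ in $\mathcal G^{(r)}_k$ with $S\subseteq\tau(V(P'))$, and transport $\phi'$---is exactly how the paper proceeds, and your verification of (I)--(VII) from (1)--(7) is correct once such a $P'$ exists.  The gap is in your structural claim itself.

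You assert that the visits can \emph{always} be re-threaded into a single path $P'$ of $\mathcal G^{(r)}_k$.  The paper's detailed case analysis shows this is not so.  In the configuration the paper calls SubCase~2.1 (three visits, with $x_i$ and $v^A$ attached to the two outer segments and $x_j,v^B$ both attached to the middle one), once you reduce to $v_1,v_z\in A^{(r)}_{k+1}$, $v_1$ has no neighbour on the middle segment, and the initial $A$-clique $C=\{v_1,\dots,v_{t-1}\}$ has no $A$-neighbour in $S$, there need not be any path in $\mathcal G^{(r)}_k$ covering all of $S$.  What the paper does instead is cover $S\setminus C$ by a path, transport a colouring to $S\setminus C$, and then \emph{extend it to $C$ by a counting argument}: the colours $1,\dots,\lfloor r/2\rfloor-k-2$ together with $b$ are free on $C$, and if that still falls short, Lemma~\ref{Lemma: k+2 Bs next to a common A vertex} produces two $B$-neighbours $b_1,b_2$ of $C$ with $N[b_1]=N[b_2]$, which only then permits building a path through all of $S$ (a contradiction).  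So the re-threading is contingent on the failure of the direct extension, not guaranteed outright.  Your sketch (``a case analysis on the $A/B$-types of the visit endpoints\dots shows the visits can be re-threaded'') does not supply this mechanism, and your remark that ``the edge $v^Av^B$ is never needed'' is also wrong---the paper uses it in Case~1 and in the first subcase of Case~2.

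In short, the paper's proof is not purely a re-threading argument; it interleaves partial re-threading with a genuine colour-counting step that relies on Lemma~\ref{Lemma: k+2 Bs next to a common A vertex}.  Your proposal is missing that ingredient.
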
 

\begin{proof}
Suppose that for all paths $P'$ in $\mathcal{G}^{(r)}_k$ there is an $r$-colouring satisfying (\ref{Lemma: Induction-colour-component (1)})-(\ref{Lemma: Induction-colour-component (7)}).
Let $P$ be a fixed but arbitrary path in $\mathcal{G}^{(r)}_{k+1}$. 
Suppose without loss of generality that $P$ is 
maximal in the sense that no vertex
in $V(\mathcal{G}^{(r)}_{k+1})\setminus W^{(r)}_{k+1}$
can be added to $P$ making it a longer path.
We allow for the addition of vertices from $W^{(r)}_{k+1}$ to make $P$ a longer path.
Also suppose for the sake of contradiction that there is no colouring $\phi$ of the vertices of $P$ that satisfies (\ref{Lemma: Induction-colour-component (I)})-(\ref{Lemma: Induction-colour-component (VII)}). 
Let $1 \leq i,j \leq r+k-1$ be a fixed but arbitrary
pair of distinct integers,
without loss of generality we consider $G^{i,j}_{k,+}$.
Furthermore, suppose without loss of generality $N(x_i)\cap V(G)=A_{k,+}^{i,j}$ and $N(x_j)\cap V(G)=B_{k,+}^{i,j}$.

Let 
$$S = V(P) \cap \Big(V(G^{i,j}_{k,+})  \cup \{x_i,x_j\}\Big),$$
let $G = G^{(r)}_{k+1}[V(G^{i,j}_{k,+}) \cup \{x_i,x_j\}]$, and let $G' = G^{(r)}_{k+1}[S]$.
It is trivial to verify that $G$ is isomorphic to $\mathcal{G}^{(r)}_{k} -(v^A,v^B)$.
Without loss of generality suppose that $N(x_i) = A^{i,j}_{k,+}$ and $N(x_j) = B^{i,j}_{k,+}$ in $G$. 
The most obvious isomorphism between these graphs sends every $v$ in $G^{(r)}_{k}$
to $v$ in $G$, and sends $v^A$ to $x_i$ and $v^B$ to $x_j$.
Let $\tau$ denote this isomorphism from $\mathcal{G}^{(r)}_{k} - (v^A,v^B)$ to $G$.

Since $G$ is isomorphic to a subgraph of $\mathcal{G}^{(r)}_{k}$,
if there exists a path $P'$ in $\mathcal{G}^{(r)}_{k}$ such that $S \subseteq \tau(V(P'))$, 
then the claim follows immediately from our assumption there exists a colouring $\phi'$ satisfying (\ref{Lemma: Induction-colour-component (1)})-(\ref{Lemma: Induction-colour-component (7)}).
Suppose then for the sake of contradiction that there is no path $P'$ 
in $\mathcal{G}^{(r)}_{k}$ such that $S \subseteq \tau(V(P'))$.

Since $P$ is a path in $\mathcal{G}^{(r)}_{k+1}$ 
and $S$ is the intersection of $P$ and a subgraph of $\mathcal{G}^{(r)}_{k+1}$, 
$S$ is spanned by a set of vertex disjoint subpaths $P_1,\dots, P_\ell$ of $P$.
By our assumption that there is no path $P'$ 
in $\mathcal{G}^{(r)}_{k}$ such that $S \subseteq \tau(V(P'))$
we note $\ell\geq 2$.
Without loss of generality suppose $P = v_1,\dots, v_z$
and suppose that $P_t$ contains vertices with smaller index than $P_{t+1}$ for every $t$.
Here each path $P_t = v_{p}, v_{p + 1}\dots, v_{v_{p + p'}}$ corresponds to a maximal segment of $P$ that is entirely contained in $G$,
such that the $v_{p-1}$ and $v_{p+p'+1}$ are not vertices in $G$.

Recall that $\{v^A,v^B,x_i,x_j\}$ is a vertex cut in  $\mathcal{G}^{(r)}_{k+1}$ 
that separates  $G^{i,j}_{k,+}$ from the rest of $\mathcal{G}^{(r)}_{k+1}$.
It follows that if $P_t$ does not contain an endpoint $v_1$ or $v_z$ of $P$, then 
$v_p \in \{x_i,x_j\}$ or $v_{p-1} \in \{v^A,v^B\}$,
with the same being true for $v_{p+p'}$ and $v_{p+p'+1}$ respectively.
Since $\ell\geq 2$, there is no path $P_t$ which contains both endpoints of $P$.
Hence, if $P_t$ contains exactly one endpoint of $P$, say $v_1 = v_p$ without loss of generality,
then $v_{p+p'} \in \{x_i,x_j\}$ or $v_{p+p'+1} \in \{v^A,v^B\}$ must be the case.
Thus, all the paths $P_1,\dots, P_\ell$ 
being vertex disjoint and maximal implies that $\ell \leq 3$
with equality only if $v_1 \in V(P_1)$ and $v_{z} \in V(P_3)$

\vspace{0.25cm}
\noindent\underline{Case.1:} $\ell = 2$.
\vspace{0.25cm}

Let $P_1 = v_{p},\dots, v_{p+p'}$ and let $P_2 = v_{q},\dots, v_{q+q'}$.
We first consider the case where $P_1$ has an endpoint equal to $x_i$ or $v^A \in \{v_{p-1}, v_{p+p'+1}\}$,
and $P_2$ has an endpoint equal to $x_j$ or $v^B \in \{v_{q-1}, v_{q+q'+1}\}$.
Without loss of generality suppose $v_{p} = x_i$ and  $v_q = x_j$.
All other cases of this type follow via the same argument.
Then 
\[
P' = v_{p+p'}, \dots, v_{p+1}, v^A, v^B, v_{q+1}, \dots, v_{q+q'}
\]
is a path in $\mathcal{G}^{(r)}_{k}$.
But this is a contradiction since $S \subseteq \tau(V(P'))$.

Otherwise, the fact that $P_1$ and $P_2$ are vertex disjoint maximal segments of $P$ contained in $G$ implies,
without loss of generality, 
$P_1$ has an endpoint equal to $x_i$ or $v^A \in \{v_{p-1}, v_{p+p'+1}\}$, and 
$P_2$ has an endpoint equal to $x_i$ or $v^A \in \{v_{q-1}, v_{q+q'+1}\}$.
Since $P_1$ and $P_2$ are vertex disjoint, suppose $v_p = x_i$ and $v_{q-1} = v^A$ without loss of generality.
Then, 
\[
P' = v_{p+p'}, v_{p+p'-1}, \dots,v_{p+1}, v^A, v_{q}, \dots, v_{q+q'}
\]
is a path in $\mathcal{G}^{(r)}_{k}$.
But this is a contradiction as $S \subseteq \tau(V(P'))$.
Suppose then that $\ell = 3$.
\hfill $\diamond$

\vspace{0.25cm}
\noindent\underline{Case.2:} $\ell = 3$.
\vspace{0.25cm}

Let $P_1 = v_{1},\dots, v_{s}$, $P_2 = v_{p},\dots, v_{p+p'}$, $P_3 = v_{q},\dots, v_{z}$, and $s < p < q$.
Then, the reader can easily verify that
\[
\{v^A,v^B,x_i,x_j\} \subseteq \{v_s,v_{s+1},v_{p-1},v_p,v_{p+p'},v_{p+p'+1}, v_{q-1}, v_q,\}.
\]
We will consider different cases based on how this containment is structured.

We first consider the case where 
$\{v_{p-1},v_p,v_{p+p'},v_{p+p'+1}\}$ has non-empty intersection with $\{v^A,x_i\}$ and $\{v^B,x_j\}$.
Suppose without loss of generality that $v_p = x_i$ and $v_{p+p'} = x_j$. 
All other situations in this case will follow by similar arguments.
Then, $v_{s+1} = v^A$ and $v_{q-1} = v^B$, or vice versa.
Say $v_{s+1} = v^A$ and $v_{q-1} = v^B$ without loss of generality.
Then, 
\[
P' = v_1, \dots, v_{s}, v^A, v_{p+1}, \dots, v_{p+p'-1}, v^B, v_{q+1}, \dots, v_z
\]
is a path in  $\mathcal{G}^{(r)}_{k}$.
But this is a contradiction as $S \subseteq \tau(V(P'))$.
Suppose then that $\{v_{p-1},v_p,v_{p+p'},v_{p+p'+1}\}$ has empty intersection with $\{v^A,x_i\}$ or $\{v^B,x_j\}$. 
Note that this set cannot have empty intersection with both $\{v^A,x_i\}$ or $\{v^B,x_j\}$ 
since $P$ must cross the cut $\{v^A,v^B,x_i,x_j\}$ to enter $G$, before traveling along $P_2$,
and $P$ must cross the cut again to leave $G$ at the end of $P_2$. 
For the same reason, in what follows we must have $\{v^A,x_i\}\subseteq\{v_{p-1},v_p,v_{p+p'},v_{p+p'+1}\}$ or $\{v^B,x_j\}\subseteq\{v_{p-1},v_p,v_{p+p'},v_{p+p'+1}\}$.

\vspace{0.25cm}
\noindent\underline{SubCase.2.1:} $\{v_{p-1},v_p,v_{p+p'},v_{p+p'+1}\}$ has empty intersection with $\{v^A,x_i\}$.
\vspace{0.25cm}

Then, $\{v^B,x_j\} \subseteq \{v_{p-1},v_p,v_{p+p'},v_{p+p'+1}\}$.
We assume that $v_p = x_j$, that $v_{p+p'+1} = v^B$, as well as $v_s = x_i$, and $v_{q-1} = v^A$.
All other cases are equivalent.

If $v_1 \in B^{(r)}_{k+1}$, then 
\[
P' = v_z, v_{z-1},\dots, v_{q}, v^A, v_{s-1}, \dots, v_{1}, v^B, v_{p+1}, \dots, v_{p+p'}
\] 
is a path in $\mathcal{G}^{(r)}_{k}$ such that $S \subseteq \tau(V(P'))$.
By a similar argument, if $v_z \in B^{(r)}_k$, then there exists a path $P'$ in $\mathcal{G}^{(r)}_{k}$ such that $S \subseteq \tau(V(P'))$.
Since this is a contradiction, suppose $v_1, v_z \in A^{(r)}_{k+1}$.

Suppose for the sake of a contradiction that $N(v_1) \cap V(P_2) \neq \emptyset$.
Then there exists a vertex $v_{p+w} \in V(P_2)$ and an edge $(v_1,v_{p+w})\in E(G^{(r)}_k)$.
Then, 
$$
P' = v_z, v_{z-1},\dots, v_{q+1}, v^A, v_{s-1}, \dots, v_{1}, v_{p+w}, v_{p+w+1}, \dots, v_{p+p'}, v^B, v_{p+1}, \dots, v_{p+w-1}
$$ 
is a path in $\mathcal{G}^{(r)}_{k}$ such that $S \subseteq \tau(V(P'))$.
Since this is a contradiction, suppose $v_1$ has no neighbour in $P_2$.

Let $t$ be the least integer such that $v_t \in B^{(r)}_{k+1}$.
Since $v_{s}$ is equal to $x_i \in B^{(r)}_{k+1}$ and $v_1 \in A^{(r)}_{k+1}$, we know that $2 \le t \le  s$. 
Let $C = \{v_1,\dots, v_{t-1}\}$.
Then, $C \subseteq A^{(r)}_{k+1}$.
Hence, Lemma~\ref{Lemma: Induced As} implies $C$ induces a clique.

If $t = s$, then $V(P_1)\setminus \{v_s\} = C$ and
$$
P' = v_z, v_{z-1},\dots, v_{q+1}, v^A, v^B, v_{p+1}, \dots, v_{p+p'}
$$ 
is a path is a path in $\mathcal{G}^{(r)}_{k}$ such that $S\setminus C \subseteq \tau(V(P'))$.
Similarly, if $t < s$, then 
$$
P' = v_z, v_{z-1},\dots, v_{q}, v^A, v_{s-1}, \dots, v_{t}, v^{B}, v_{p+1}, \dots, v_{p+p'}
$$ 
is a path in $\mathcal{G}^{(r)}_{k}$ such that $S\setminus C \subseteq \tau(V(P'))$, as $v_t \in B^{(r)}_{k+1}$.
Thus, there exists a path $P'$ in $\mathcal{G}^{(r)}_{k}$ such that $S\setminus C \subseteq \tau(V(P'))$.

Suppose for the sake of a contradiction that $N(C) \cap A^{(r)}_{k+1} \cap S \neq \emptyset$.
Then there exists a vertex $v_g \in A^{(r)}_{k+1} \cap S$ where $g>t$, such that $(v_g,v_h)$ is an edge for some $1\leq h < t$. 
Then Lemma~\ref{Lemma: Induced As} implies $C\cup \{v_g\}$ induces a clique.

Recalling that $v_z,v_q$ are both in $A^{(r)}_{k+1}$,
if $g = z$ or $g = q$, then trivially, we can extend the path $P'$ (possibly by reversing the segment $v_z, v_{z-1},\dots, v_{q+1}$) to include all vertices in $C$, which is a contradiction.
Further, recall that $v_p,v_{p+p'}\in B^{(r)}_{k+1}$, so $g\neq p$ and $g\neq p+p'$.
Thus, $u$ has neighbours $v_{g-1}, v_{g+1}$ on $P'$.

Since $v_{g+1} \in V(G^{i,j}_{k,+})\cup \{x_i,v^A\}$, Lemma~\ref{Lemma: Bs next to As} implies $v_{g+1}$ must be adjacent to every vertex in $C\cup \{v_g\}$.
Then, 
$P'$ can be extended to a path $P''$
satisfying $S \subseteq \tau(V(P''))$
by traveling along $P'$ until reaching $v_g$, then taking a detour to collect all vertices in $C$, which can be done as $C\cup \{v_g\}$ is a clique,
before moving to $v_{g+1}$ which is possible since $C\cup \{v_g\} \subseteq N(v_{g+1})$, and continuing along $P'$.
This is a contradiction, so we suppose $N(C) \cap A^{(r)}_{k+1} \cap S = \emptyset$.

Therefore, $N(C) \subseteq B^{(r)}_k \cup \{v^A,x_i\}$.
Since there is a path $P'$ in $\mathcal{G}^{(r)}_{k}$ satisfying $S\setminus C \subseteq \tau(V(P'))$,
there exists an $r$-colouring of $G^{(r)}_{k+1}[S\setminus C]$ satisfying  (\ref{Lemma: Induction-colour-component (I)})-(\ref{Lemma: Induction-colour-component (VII)}) from the induction hypothesis.
Since $a'$ and $b'$ can be any distinct colours, subject to the lemma statement, we may
let $\phi$ be such an $r$-colouring,
where conditions (\ref{Lemma: Induction-colour-component (V)}) and (\ref{Lemma: Induction-colour-component (VI)}) imply $\phi(x_i) = a$ and $\phi(x_j) = b$, for distinct colours $a$ and $b$ of our  choosing (subject to the lemma statement).

If $|C|=1$, then $\{v_1\} = C$.
Recall $x_i,x_j \in S\setminus C$.
Note that $x_j$ is a non-neighbour of $v_1$ and every vertex in $u \in B^{(r)}_{k+1} \cap S$ distinct from $x_j$ is a neighbour of $x_j$.
Hence, for all vertices  $u \in B^{(r)}_{k+1} \cap S$ distinct from $x_j$, $\phi(u) \neq b$.
Since no neighbour of $C$ is in $A^{(r)}_{k+1}$, 
$N(v_1) \cap S \subseteq B^{(r)}_{k+1} \cap S$.
Thus, we can extend $\phi$ to colour $v_1$ by setting $\phi(v_1) = b$.
It is easy to verify that the extended colouring $\phi$ satisfies (\ref{Lemma: Induction-colour-component (I)})-(\ref{Lemma: Induction-colour-component (VII)}).

Consider then the case where $|C|>1$.
Then Lemma~\ref{Lemma: Induced As} implies $|C| \leq \lfloor \frac{r}{2} \rfloor$.
Recalling conditions (\ref{Lemma: Induction-colour-component (I)})) and (\ref{Lemma: Induction-colour-component (III)})
\begin{itemize}
    \item If $k = 0$, then condition (\ref{Lemma: Induction-colour-component (I)}) for the $r$-coloring $\phi$ of $G^{(r)}_{k+1}[S\setminus C]$ implies there are $\lfloor \frac{r}{2} \rfloor -1$ colours available to the vertices of $C$, since $C \subseteq A^{(r)}_{k+1}$ is not adjacent to any already coloured vertices from $A^{(r)}_{k+1}$. In particular, colours $\{1,\dots, \lfloor \frac{r}{2} \rfloor -1\}$ are available to all vertices of $C$.
    As in the case $|C| = 1$, (\ref{Lemma: Induction-colour-component (V)}) and (\ref{Lemma: Induction-colour-component (VI)}) imply colour $b > \lfloor \frac{r}{2} \rfloor -1$ is also available to every vertex of $C$.
    Hence, every vertex in $C$ can receive $\lfloor \frac{r}{2} \rfloor$ colours in an extension of $\phi$.
    Since $|C| \leq  \lfloor \frac{r}{2} \rfloor$ this implies we can extend $\phi$ to include the vertices of $C$.
    It is easy to verify that such an extended colouring $\phi$ satisfies (\ref{Lemma: Induction-colour-component (I)})-(\ref{Lemma: Induction-colour-component (VII)}).

    \item If $k\geq 1$, then condition (\ref{Lemma: Induction-colour-component (III)}) for the $r$-coloring $\phi$ of $G^{(r)}_{k+1}[S\setminus C]$ implies there are $\lfloor \frac{r}{2} \rfloor - k - 2$  colours available to the vertices of $C$, since $C$ is not adjacent to any already coloured vertices from $A^{(r)}_{k+1}$. 
    These are colours $\{1,\dots, \lfloor \frac{r}{2} \rfloor - k - 2\}$.
    By the same argument as when $k = 0$, colour $b > \lfloor \frac{r}{2} \rfloor - k - 2$ is also free.
    So there are at least $\lfloor \frac{r}{2} \rfloor - k - 1$ colours free to use on vertices of $C$.

    We will now consider which of the $2k+4$ colours from the set $\{r - 2k - 3,r - 2k - 2, \dots, r\}$
    can be used to colour vertices of $C$. 
    One of these colours is $b$ which we have already counted.
    It also may be the case that $a\in\{r - 2k - 3,r - 2k - 2, \dots, r\}$, and clearly $a$ cannot be used to colour a vertex in $C$ by condition (\ref{Lemma: Induction-colour-component (V)}).
    So there are $2k+2$ colours in  $\{r - 2k - 3,r - 2k - 2, \dots, r\}$, which may or may not be available to the vertices in $C$.
    Since there are already $\lfloor \frac{r}{2} \rfloor - k - 1$ colours free to use on vertices of $C$, where $|C| \leq \lfloor \frac{r}{2}\rfloor$,
    if an additional $k+1$ colours are free, then we can extend $\phi$ to $C$ using colours  from $\{1, \dots, \lfloor \frac{r}{2} \rfloor - k - 2\}$ and from $\{r - 2k - 3,r - 2k - 2, \dots, r\}$.
    
    If $\phi$ can be extended to the vertices of $C$ so that $C$ receives only colours 
    from $\{1, \dots, \lfloor \frac{r}{2} \rfloor - k - 2\}$ and from $\{r - 2k - 3,r - 2k - 2, \dots, r\}$,
    then it is easy to verify that conditions (\ref{Lemma: Induction-colour-component (I)})-(\ref{Lemma: Induction-colour-component (VII)}) are satisfied.
    This would contradict our assumption that no such colouring exists for $P$.
    Suppose then that $\phi$ cannot be extended this way.

    Then, there are at least $k+2$ distinct vertices $b_1,\dots, b_{k+2} \in B^{(r)}_{k+1} \cap N(C) \cap S$
    which all receive distinct colours from  $\{r - 2k - 3,r - 2k - 2, \dots, r\}$, none of which are colour $a$ or $b$.
    Let $\{b_1,\dots, b_{k+2} \}$ be one such set of vertices.
    Then, without loss of generality with respect to the labeling of the $b_d$-vertices, 
    Lemma~\ref{Lemma: k+2 Bs next to a common A vertex} implies vertices $b_1$ and $b_2$ satisfy
    $N[b_1] = N[b_2]$ and, given that $C$ has no neighbours in $A^{(r)}_{k+1}\cap S$, we must have $N(b_1)\cap A^{(r)}_{k+1} \cap S = C$.

    Since $N(v_1) \cap V(P_2) = \emptyset$ and $C$ is a clique,
    Lemma~\ref{Lemma: Bs next to As} implies that $b_1,b_2 \notin V(P_2)$.
    Thus, $b_1,b_2 \in V(P_1)\cup V(P_3)$.
    Observe that $b_1 = v_g$ and $b_2 = v_h$ for some $g$ and $h$.
    Without loss of generality suppose $g<h$.

    Suppose $v_g,v_h \in V(P_1)$.
    Then, $t\leq g < h < s$.
    Since $N(b_1)\cap A^{(r)}_{k+1} \cap S = C$,
    every vertex in $N[b_1] (= N[b_2])$ that is not also in $C\cup \{x_i,x_j,v^A,v^B\}$ must be in $B^{(r)}_{k+1}$.
    Hence, $v_{h-1} \in B^{(r)}_{k+1}$.
    Then,
    $$
    P'' = v_z,\dots, v_{q}, v^A, v_{s-1}, \dots, v_h, v_1, v_2,\dots, v_{h-2}, v_{h-1}, v^B, v_{p+1}, \dots, v_{p+p'}
    $$
    is a path in is a path in $\mathcal{G}^{(r)}_{k}$ such that $S\subseteq \tau(V(P''))$.
    But the existence of such a path $P''$ is a contradiction.
    So we suppose $v_g$ and $v_h$ are not both in $P_1$.

    Suppose $v_g,v_h \in V(P_3)$.
    Then, $q<g < h < z$ since we previously showed $v_z \in A^{(r)}_{k+1}$ whereas $v_h \in B^{(r)}_{k+1}$.
    By the same argument as before, $v_{g+1}, v_{h-1} \in B^{(r)}_{k+1}$.
    Then, 
    $$
    P'' = v_s,\dots, v_1,v_h, v_{h+1}, \dots, v_{z}, v^A, v_{q}, \dots, v_{h-1}, v^B, v_{p+1},\dots, v_{p+p'}
    $$
    is a path in is a path in $\mathcal{G}^{(r)}_{k}$ such that $S\subseteq \tau(V(P''))$.
    But the existence of such a path $P''$ is a contradiction.
    So we suppose $v_g$ and $v_h$ are not both in $P_3$.

    Suppose $v_g\in V(P_1)$ and $v_h \in V(P_3)$.
    Given $g< h$ and $v_g,v_h \in V(P_1)\cup V(P_3)$ this is the only remaining case.
    Then, $t\leq g < s < q< h < z$ by the same arguments seen before.
    As before, $v_{h-1} \in B^{(r)}_{k+1}$.
    Then, 
    \[
    P'' = v_{p+1},\dots, v_{p+p'},v^B, v_{h-1}, \dots, v_q, v^A, v_z, \dots, v_h, v_1, \dots, v_s
    \]
    is a path in is a path in $\mathcal{G}^{(r)}_{k}$ such that $S\subseteq \tau(V(P''))$.
    But the existence of such a path $P''$ is a contradiction.

    Thus, the existence of a set of distinct vertices  $\{b_1,\dots, b_{k+2} \} \subseteq B^{(r)}_{k+1} \cap N(C)$ is a contradiction.
    It follows no such set exists.
    This implies that $|B^{(r)}_{k+1} \cap N(C)| \leq k+1$.
    Hence, there are least $(2k+2)-(k+1) = k+1$ colours in $\{r - 2k - 3,r - 2k - 2, \dots, r\}$ distinct from $a$ and $b$, which are free to use when colouring vertices in $C$.
    Therefore, taking $b$ into account, there are at least $1+(k+1)+\lfloor\frac{r}{2}\rfloor-k-2=\lfloor \frac{r}{2}\rfloor$ colours that can be used to colour $C$. Again, since $|C|\le \lfloor \frac{r}{2}\rfloor$ by Lemma~\ref{Lemma: Induced As}, $\phi$ can be extended to colour $C$ using colours from 
    $\{1, \dots, \lfloor \frac{r}{2} \rfloor - k - 2\}$ and $\{r - 2k - 3,r - 2k - 2, \dots, r\}$
    while satisfying (\ref{Lemma: Induction-colour-component (I)})-(\ref{Lemma: Induction-colour-component (VII)}).
\end{itemize}

This concludes the proof of SubCase.2.1.
\hfill $\diamond$

\vspace{0.25cm}
\noindent\underline{SubCase.2.2:} $\{v_{p-1},v_p,v_{p+p'},v_{p+p'+1}\}$ has empty intersection with $\{v^B,x_j\}$.
\vspace{0.25cm}

Then, $\{v^A,x_i\} \subseteq \{v_{p-1},v_p,v_{p+p'},v_{p+p'+1}\}$.
We assume that $v_p = x_i$, that $v_{p+p'+1} = v^A$, as well as $v_s = x_j$, and $v_{q-1} = v^B$.
All other cases are equivalent.

Observe that if there is a path $P^*$ in $\mathcal{G}^{(r)}_{k+1}$ that such that 
\[
V(P^*) \cap  \Big(V(G^{i,j}_{k,+})  \cup \{x_i,x_j\}\Big) = S
\]
where $P^*$ falls under any case other than SubCase.2.2, then we have already shown there exists an $r$-colouring as required.
Suppose then that all paths $P^*$ in $\mathcal{G}^{(r)}_{k+1}$ which intersect $G$ in vertex set $S$ are in SubCase.2.2.
Further, we suppose without loss of generality that $P$ is chosen among all paths $P^*$ 
to minimize the least integer $t$ such that 
\[
v_t \in A^{(r)}_{k+1} \cup W^{(r)}_{k+1} \cup \{v: v \text{ is minimal in } \preceq\}.
\]
Such an integer $t$ exists since $v_{p-1}\in A^{(r)}_{k+1}$ as $v_p = x_i$. 
Note also that this shows that $t \leq p$.
Here minimizing $t$ amounts to having a vertex in the described set as close to the beginning of the path $P^*$ as possible. 
We will now show that $t>1$ by showing a contradiction in each of the three cases when $t=1$.

If $v_1 \in A^{(r)}_{k+1}$, then 
\[
P' = v_z ,\dots, v_q, v^B, v_{s-1},\dots, v_{1},v^A, v_{p+1}, \dots, v_{p+p'}
\]
is a path in is a path in $\mathcal{G}^{(r)}_{k}$ such that $S\subseteq \tau(V(P'))$.
But the existence of such a path $P'$ is a contradiction.
Suppose then that $v_1 \notin A^{(r)}_{k+1}$.

If $v_1 \in W^{(r)}_{k+1}$, then consider $P-\{v_1\}$.
If $P-\{v_1\}$ is maximal in the sense that no vertex
in $V(\mathcal{G}^{(r)}_{k+1})\setminus W^{(r)}_{k+1}$
can be added to $P$ making it a longer path,
then begin the argument for SubCase.2.2 again with $P-\{v_1\}$ as our path.
If $P-\{v_1\}$ is not maximal in this sense, then extend $P-\{v_1\}$ into a maximal path $P'$
and begin the argument for SubCase.2.2 again with this as your choice of path.
Notice that if this process must be repeated to form a new path $P''$, then $S\setminus \{v_1\} \subseteq V(P'')$.
Furthermore, repeating this process again will never revisit $v_1$, since all neighbours of $v_1$ are in $S\setminus \{v_1\}$,
given the maximiality of $P$ and 
that Lemma~\ref{Lemma: W structure} implies all neighbours of $v_1$ are in $V(\mathcal{G}^{(r)}_{k+1})\setminus W^{(r)}_{k+1}$.
Trivially, the resulting paths will still be in SubCase.2.2.

Suppose now without loss of generality that $P'''$ is the path obtained from applying this process until 
$P'''$ is a maximal path, in the desired sense, under SubCase.2.2 and the first vertex on $P'''$ is not in $W^{(r)}_{k+1}$.
Then, the remainder of the proof of this SubCase shows that 
the graph induced by $V(P''')$ admits an  
$r$-colouring
satisfying (\ref{Lemma: Induction-colour-component (I)})-(\ref{Lemma: Induction-colour-component (VII)}).
Since $S\setminus \{v_1\} \subseteq V(P''')$ this implies 
the graph induced by $S\setminus \{v_1\}$ admits an  
$r$-colouring
satisfying (\ref{Lemma: Induction-colour-component (I)})-(\ref{Lemma: Induction-colour-component (VII)}).
Then condition (\ref{Lemma: Induction-colour-component (VII)}), Lemma~\ref{Lemma: W structure}, and Lemma~\ref{Lemma: B minus W structure}~(\ref{Lemma: B minus W structure (4)}),
imply that this colouring can be extended to $v_1$ while satisfying (\ref{Lemma: Induction-colour-component (I)})-(\ref{Lemma: Induction-colour-component (VII)}). 
This contradicts out assumption no such colouring exists so suppose $v_1\notin W^{(r)}_{k+1}$.

Finally, consider the case where $v_1$ is minimal in $\preceq$.
Then, from Lemma~\ref{Lemma: B minus W structure}~(\ref{Lemma: B minus W structure (2)}), there exists vertices $u_2,u_3,\ldots u_{\lfloor\frac{r}{2}\rfloor}$ such that
$v_1 = u_1 \prec \dots \prec u_{\lceil \frac{r}{2} \rceil}$ in a covered chain in $\preceq$ such that $u_1$ is $\preceq$ minimal.
From Lemma~\ref{Lemma: B minus W structure}~(\ref{Lemma: B minus W structure (1)}),
we have that $N(u_1) \cap B^{(r)}_{k+1}$ induces a clique, call it $K$.
Similarly, by Lemma~\ref{Lemma: B minus W structure}~(\ref{Lemma: B minus W structure (5)}), $N(u_1) \cap A^{(r)}_{k+1}$ induces a clique, call it $C$.
Then the maximality of $P$ implies $C\cup K \subseteq S$.

If there exists vertices $v_f, v_{f+1} \in C$, which are adjacent in $P$,
then 
\[
P' = v_f, v_{f-1}, v_{f-2,}\dots, v_2,v_1,v_{f+1}, v_{f+2}, \dots, v_z
\]
is a path spanning $S$ with an endpoint in $A^{(r)}_{k+1}$. If $P'$ is not in SubCase.2.2, then we are done by Case.1 or Case.2.1, so suppose $P'$ is in SubCase.2.2.
Thus, by the same argument as we already made for when $v_1\in A^{(r)}_{k+1}$, this implies a contradiction.
So suppose there is no index $f$ where $v_f, v_{f+1} \in C$.

By Lemma~\ref{Lemma: B minus W structure}~(\ref{Lemma: B minus W structure (5)}), $|C| = \lfloor \frac{r}{2} \rfloor$.
Notice that if $v_z \in C$, then by the same argument as $v_1\in A^{(r)}_{k+1}$, this leafs to a contradiction.
So $v_z \notin C$.
Then, no edge induced by $C$ is on the path $P$, and the endpoints of $P$ are not on $C$.
Hence, 
there must be $|C| + 1 = \lfloor \frac{r}{2}\rfloor  + 1$ edges in $E(P) \cap E(C, S\setminus C)$.
Thus, there are vertices $b_1,\dots, b_{\
\lfloor \frac{r}{2}\rfloor  + 1} \in S\cap B^{(r)}_{k+1}$
all of which are adjacent to vertices in $C$ using edges from $E(P)$.
Given, $k \leq \lfloor \frac{r}{2} \rfloor -1$ this implies there are at least $k+2$ vertices $b_1,\dots, b_{k+2}$.
Then by the same argument as the proof of
Lemma~\ref{Lemma: k+2 Bs next to a common A vertex}, we can conclude that at least two of these vertices, 
say $b_1$ and $b_2$ without loss of generality,
are in $\{u_1,\dots, u_{\lceil \frac{r}{2} \rceil} \}$.

Therefore, there exists a vertex $v_f = u_d \neq u_1 = v_1$, such that $v_{f-1} \in C$ or $v_{f+1} \in C$. 
By Lemma~\ref{Lemma: B minus W structure}~(\ref{Lemma: B minus W structure (3)}), $N[v_f] = N[v_1]$.

Also, there exists a second vertex $v_g = u_{d'}$, which may equal $v_1$, such that $v_{g-1} \in C$ or $v_{g+1} \in C$.
If $v_g \neq v_1$, then Lemma~\ref{Lemma: B minus W structure}~(\ref{Lemma: B minus W structure (3)}) implies $N[v_g] = N[v_1]$.
Since we allow $v_g$ to equal $v_1$, we assume $f > g$ when $v_g \neq v_1$.

If $v_{f-1}\in C$ or $v_{g-1}\in C$ (we note that if $g=1$, $v_{g-1}$ does not exist), then 
\[
P' = v_z, v_{z-1}, \dots,v_f,v_1,v_{2}, \dots, v_{f-1}
\]
or
\[
P' = v_z, v_{z-1}, \dots, v_g, v_1, v_2,\dots, v_{g-1}
\]
is a path spanning $S$ with an endpoint in $A^{(r)}_{k+1}$.
If $P'$ is not in SubCase.2.2, then we are done by Case.1 or Case.2.1, so suppose $P'$ is in SubCase.2.2.
Thus, by the same argument as we already made for when $v_1\in A^{(r)}_{k+1}$, this implies a contradiction.
Suppose then that $v_{f-1} \notin C$ and $v_{g-1}\not\in C$. This implies $v_{f+1} \in C$ and $v_{g+1} \in C$.

Then,
\[
P' = v_z, v_{z-1}, \dots, v_{f+1}, v_1,\dots,v_g, v_f, v_{f-1}, \dots, v_{g+1}
\]
is a path, in SubCase.2.2, spanning $S$ with an endpoint in $A^{(r)}_{k+1}$.
If $P'$ is not in SubCase.2.2, then we are done by Case.1 or Case.2.1, so suppose $P'$ is in SubCase.2.2.
Thus, by the same argument as we already made for when $v_1\in A^{(r)}_{k+1}$, this implies a contradiction.
Therefore, we have contradicted $v_1$ is minimal in $\preceq$.
Suppose then that $v_1$ is not minimal in $\preceq$.

Hence, we have shown that $v_1$ is not in $A^{(r)}_{k+1}$, or in $W^{(r)}_{k+1}$, and $v_1$ is not minimal in $\preceq$.
Suppose then that $t>1$.

Let $u_1 \prec \dots \prec u_{\lceil \frac{r}{2} \rceil}$ be a covered chain in $\preceq$ such that $u_1$ is $\preceq$ minimal, and $u_1 \prec v_1$.
Recall that in the definition of $\preceq$ in the proof of Lemma~\ref{Lemma: B minus W structure} starts with a total ordering of the $\lceil \frac{r}{2} \rceil$ vertices in $G^{(r)}_0$.
Therefore, such a covered chain exists.
Further, recall that each time $x_i$ and $x_j$ and (similarly $y_i$ and $y_j$) are added in constructing $G^{(r)}_{\ell+1}$ from $G^{(r)}_{\ell}$ we have $u\preceq x_i$ and $u\preceq x_j$ for all $u\in G^{i,j}_k$.
Thus, we may choose $u_1$ such that $u_1\preceq v_1$.
Since, $v_1$ is not minimal in $\preceq$ note that $v_1 \neq u_1$.
Again from the definition of $\preceq$,
we have that $N(u_1) \cap B^{(r)}_{k+1}$ induces a clique, call it $K$, and $v_1 \in K$.
Similarly, by Lemma~\ref{Lemma: B minus W structure}~(\ref{Lemma: B minus W structure (5)}), $N(u_1) \cap A^{(r)}_{k+1}$ induces a clique call it $C$.

Since, $P$ is maximal, $u_1 \in S$, otherwise, $P$ can be made longer by adding $u_1$. 
Let $h\in \{1,\ldots,z\}$ be the index of $u_1$ on $P$, i.e. $u_1=v_h$.
Since $u_1 \in B^{(r)}_{k+1}$ and $v_p = x_i$ and $v_{p+p'+1} = v^A$, $h \neq p$, $h\neq p+1$, and $h\neq p+p'$.
We have already shown that $u_1\neq v_1$, so $h\neq 1$.
If $h = z$, then by reversing the index of $P$, there exists a path $P^*$ whose first vertex is $\preceq$ minimal.
This contradicts the minimality of $t>1$.
Thus, there are vertices $v_{h-1},v_{h+1} \in S \cup \{x_j,v^B\}$.

If $v_{h-1}, v_{h+1}$ are adjacent,
then
\[
P^* = v_h, v_1,\dots, v_{h-1}, v_{h+1}, \dots, v_z.
\]
is a path in $\mathcal{G}^{(r)}_{k+1}$ which intersects  $(V(G^{i,j}_{k,+})  \cup \{x_i,x_j\})$ in $S$.
But this is a contradiction, since $v_h = u_1$ where $u_1$ is minimal in $\preceq$,
contradicting the minimality of $t>1$.
Thus, $v_{h-1}, v_{h+1}$ are non-adjacent. 
Since $K\cup \{v^B,x_j\}$ and $C$ both induce cliques,
$v_{h-1}$ and $v_{h+1}$ being non-adjacent
forces 
$v_{h-1}$ or $v_{h+1}$ is in $C$, 
while the other is in  $K\cup \{v^B,x_j\}$.

If $v_{h-1} \in C$ and $v_{h+1} \in K \cup \{v^B,x_j\}$,
then
\[
P^* = v_2,\dots, v_{h-1},v_h, v_1, v_{h+1}, \dots, v_z.
\]
is a path in $\mathcal{G}^{(r)}_{k+1}$ which intersects  $(V(G^{i,j}_{k,+})  \cup \{x_i,x_j\})$ in $S$
because $v_1,v_h,v_{h+1} \in K\cup \{v^B,x_j\}$, a clique.
But this contradicts the minimality of $t$, since $t\le h$ as $v_{h} = u_1$ is $\preceq$ minimal.
Suppose then that $v_{h+1} \in C$ and $v_{h-1} \in K \cup \{v^B,x_j\}$.

If $t \leq h-1$, 
then
\[
P^* = v_2,\dots, v_{h-1},v_1,v_h,v_{h+1}, \dots, v_z.
\]
is a path in $\mathcal{G}^{(r)}_{k+1}$ which intersects  $(V(G^{i,j}_{k,+})  \cup \{x_i,x_j\})$ in $S$.
But this contradicts the minimality of $t$.

Thus, we may suppose that $t \ge h$, and therefore $t=h$ since $v_h=u_1$ is minimal in $\preceq$.
Suppose that $u_x = v_g$ for some $1 < x \leq \lceil \frac{r}{2} \rceil$ and $1\leq g < h$.
Then, Lemma~\ref{Lemma: B minus W structure}~(\ref{Lemma: B minus W structure (3)}) implies that $N[v_g] = N[v_h]$.
Recalling $v_1$ and $v_h$ are adjacent, we have that
\[
P^* = v_{g+1}, \dots, v_h,v_{1},\dots, v_{g},v_{h+1}, \dots, v_z.
\]
is a path in $\mathcal{G}^{(r)}_{k+1}$ which intersects  $(V(G^{i,j}_{k,+})  \cup \{x_i,x_j\})$ in $S$.
But this contradicts the minimality of $t$, since $|\{g+1,\dots, h\}| < t=h$.

Suppose then that for all $1\leq g < h$, $v_g \neq u_x$ for any $1 < x \leq \lceil \frac{r}{2} \rceil$.
By the maximality of $P$, $K\cup \{x_j,v^B\} \subseteq S$.
Hence, every vertex $u_x \in S$.
Let $u_x = v_{f}$ be a fixed but arbitrary vertex in the covered chain.
Then $f>h = t$.

If $v_{f-1}, v_{f+1}$ are adjacent, then
\[
P^* = v_h,v_1,v_{2}, \dots, v_{h-1}, v_f, v_{h+1}, \dots, v_{f-1},v_{f+1},\dots, v_z.
\]
is a path in $\mathcal{G}^{(r)}_{k+1}$ which intersects  $(V(G^{i,j}_{k,+})  \cup \{x_i,x_j\})$ in $S$.
But this contradicts the minimality of $t$, since $t>1$.

Otherwise, by the same argument as $v_{h-1}$ and $v_{h+1}$, one of  $v_{f-1}, v_{f+1}$ is in $C$ and the other is in $K\cup \{x_j,v^B\}$.
If $v_{f-1} \in K\cup \{x_j,v^B\}$, let
\[
P^* = v_{2}, \dots, v_{f-1},v_1, v_f, \dots, v_z,
\] 
if $v_{f+1} \in K\cup \{x_j,v^B\}$ instead, then let 
\[
P^* = v_{2}, \dots, v_f, v_1, v_{f+1} \dots, v_z.
\] 
In either case $P^*$ is a path in $\mathcal{G}^{(r)}_{k+1}$ which intersects  $(V(G^{i,j}_{k,+})  \cup \{x_i,x_j\})$ in $S$.
But $f > h =t$ implies that we have contradicted the minimality of $t$.

This concludes the proof of SubCase.2.2.
\hfill $\diamond$
\vspace{0.25cm}

Therefore, we have shown if $\ell =1$ or $2$ or $3$, then we reach a contradiction.
Recall that we have demonstrated $1 \leq \ell\leq 3$.
Hence, we have proven a contradiction in each possible case.
It follows that there exists a colouring $\phi$ satisfying (\ref{Lemma: Induction-colour-component (I)})-(\ref{Lemma: Induction-colour-component (VII)}).
This concludes the proof.
\end{proof}

We are now prepared to prove $r(G^{(r)}_k) = r$.
This will conclude the proof of Theorem~\ref{Thm: Lower result}.

\begin{theorem}\label{Thm: Technical version of lower result}
     Let $r\geq 6$ and $0 \leq k \leq \lfloor \frac{r}{2} \rfloor - 1$.
     Then, $r(G^{(r)}_k) = r$.
\end{theorem}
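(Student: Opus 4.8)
The plan is to prove the statement by induction on $k$, mirroring the inductive construction of $G^{(r)}_k$ from Section~\ref{sec: construction}. The lower bound $r(G^{(r)}_k) \ge r$ is immediate, since $G^{(r)}_k$ contains a clique $K_r$ (the original $G^{(r)}_0$ survives as an induced subgraph inside every $G^{i,j}_{k,\pm}$), and $K_r$ is traceable, so $r(G^{(r)}_k) \ge \chi(K_r) = r$. Thus the entire difficulty is the upper bound $r(G^{(r)}_k) \le r$: every traceable induced subgraph of $G^{(r)}_k$ must be $r$-colourable, equivalently, for every path $P$ in $G^{(r)}_k$ the graph $G^{(r)}_k[V(P)]$ admits an $r$-colouring.

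For the inductive step I would strengthen the statement exactly as Lemma~\ref{Lemma: Induction-colour-component} suggests: prove that for every path $P$ in the auxiliary graph $\mathcal{G}^{(r)}_k$ (which is $G^{(r)}_k$ with the two extra vertices $v^A, v^B$ attached), the induced subgraph $\mathcal{G}^{(r)}_k[V(P)]$ has an $r$-colouring satisfying the structural conditions (1)--(7) of that lemma — certain colour classes confined to $A^{(r)}_k$ or $B^{(r)}_k$, the vertices $v^A, v^B$ receiving prescribed colours, and the set $W^{(r)}_k$ together with the $\preceq$-minimal vertices forming a single colour class. The base case $k=0$ is $G^{(r)}_0 = K_r$ (and $\mathcal{G}^{(r)}_0$ is $K_r$ plus two extra vertices of degree $\lfloor r/2\rfloor +1$ and $\lceil r/2 \rceil + 1$), where the required coloured paths are easy to handle directly by a small case analysis on which of $v^A, v^B$ and how many clique vertices lie on $P$. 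For the step from $k$ to $k+1$: given a path $P$ in $\mathcal{G}^{(r)}_{k+1}$, decompose $V(P)$ according to the subgraphs $G^{i,j}_{k,\pm}$ it meets. Since $\{v^A,v^B,x_i,x_j\}$ (resp.\ with $y$'s) is a vertex cut isolating each $G^{i,j}_{k,\pm}$, the intersection of $P$ with each such piece is a small number of subpaths, and Lemma~\ref{Lemma: Induction-colour-component} tells us precisely that this intersection can be recoloured consistently using the inductive hypothesis applied to $\mathcal{G}^{(r)}_k$. One then glues the local colourings along the cut vertices $x_i,x_j$ and the two identified vertices $\bar u,\bar v$ (the latter handled using the same trick as in Theorem~\ref{Thm: Colour G^r_k}, recolouring so that $\bar u,\bar v$ share a colour and shifting the remaining colours), and checks that conditions (\ref{Lemma: Induction-colour-component (I)})--(\ref{Lemma: Induction-colour-component (VII)}) are preserved for the glued colouring of the whole of $V(P)$; taking $\mathcal{G}^{(r)}_{k+1}$ itself (i.e.\ ignoring the new $v^A,v^B$, or colouring them last since they have the prescribed free colours available) then yields $r(G^{(r)}_{k+1}) \le r$.

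Setting $k = \lfloor r/2 \rfloor - 1$ and combining with the lower bound gives $r(G^{(r)}_{\lfloor r/2\rfloor -1}) = r$, which together with Theorem~\ref{Thm: Colour G^r_k} ($\chi = r + k = \lfloor 3r/2\rfloor - 1$) completes the proof of Theorem~\ref{Thm: Lower result} for $r \ge 6$; the small cases $r \le 5$ are covered by the earlier remark that an $(r+1)$-critical Gallai graph has $r = 3$ and $\chi = 4 \ge \lfloor 3r/2\rfloor - 1$ when $r \le 5$ (and trivially for $r \le 2$).

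The main obstacle is the inductive step's bookkeeping: the structural invariants (colour classes confined to $A$ or $B$, the $W \cup \{\text{minimal}\}$ colour class, the prescribed colours on the cut vertices) must be exactly strong enough to be both usable as a hypothesis when recoloring each piece and re-establishable after gluing — this is the delicate tension that Lemma~\ref{Lemma: Induction-colour-component} is designed to resolve. In particular, when a component $C$ of $G[A^{(r)}_{k+1}]$ (a clique of size up to $\lfloor r/2\rfloor$) is split off by the path, one must argue that enough colours remain free for $C$; this is where Lemmas~\ref{Lemma: Induced As}, \ref{Lemma: Bs next to As}, and especially \ref{Lemma: k+2 Bs next to a common A vertex} (bounding how many $B$-neighbours a clique $C$ can have before two of them become twins, allowing a path-rerouting contradiction) do the heavy lifting. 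I expect essentially all the real work — and all the case analysis — to be already encapsulated in Lemma~\ref{Lemma: Induction-colour-component}, so that the proof of Theorem~\ref{Thm: Technical version of lower result} proper is a relatively short induction that invokes that lemma together with the gluing argument from Theorem~\ref{Thm: Colour G^r_k}.
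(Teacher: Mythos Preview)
Your overall framework is right: induction on $k$, the strengthened hypothesis with conditions (1)--(7) on $\mathcal{G}^{(r)}_k$, and the use of Lemma~\ref{Lemma: Induction-colour-component} to handle each piece $V(P)\cap(V(G^{i,j}_{k,\pm})\cup\{x_i,x_j\})$. But there is a genuine gap in the gluing step, and it is precisely where the paper's proof does its real work.

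When you invoke Lemma~\ref{Lemma: Induction-colour-component} on a piece associated to $G^{i,j}_{k,+}$, you must supply distinct colours $a,b\in\{r-3,r-2,r-1,r\}$ for $x_i$ and $x_j$. But each $x_i$ is shared among \emph{many} pieces $G^{i,j}_{k,+}$ as $j$ varies, and the colour you give to $x_i$ must be the same in all of them. So before you can apply the lemma piecewise, you need a single global assignment of colours from $\{r-3,r-2,r-1,r\}$ to $x_1,\dots,x_{r+k-1}$ (and likewise the $y$'s) such that whenever $P$ meets $G^{i,j}_{k,+}$, the vertices $x_i$ and $x_j$ receive distinct colours. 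This is a proper $4$-colouring of an auxiliary graph $Q$ on $\{x_1,\dots,x_{r+k-1},y_1,\dots,y_{r+k-1}\}$, with $x_ix_j\in E(Q)$ exactly when $P$ meets $G^{i,j}_{k,+}$. Nothing in your proposal explains why such a $4$-colouring exists, and the ``trick from Theorem~\ref{Thm: Colour G^r_k}'' (splitting a colour class across $A$ and $B$) is unrelated to this issue.

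The paper establishes $\chi(Q)\le 4$ as a separate claim: since the path $P$ can only enter or leave a piece through the cut $\{x_i,x_j,v^A,v^B\}$, one shows that after deleting at most six edges (corresponding to pieces that $P$ enters or exits via $v^A$, $v^B$, or an endpoint) the graph $Q$ becomes a forest of paths. Lemmas~\ref{Lemma: Almost forest 4-colour} and~\ref{Lemma: Worst case Almost forest 4-colour} then give $\chi(Q)\le 4$. This auxiliary-graph argument is the missing ingredient in your outline; once it is in place, the rest of your plan (apply Lemma~\ref{Lemma: Induction-colour-component} to each piece with the precolours dictated by the $4$-colouring of $Q$, then colour $\bar u,\bar v,\bar w$ and permute to restore (1)--(7)) matches the paper.
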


\begin{proof}
Instead of proving the theorem directly we will instead prove the following stronger statement.  
    Let $r\geq 6$ and $0 \leq k < \lfloor \frac{r}{2} \rfloor - 1$
    and
    let $\preceq$ be a partial order of $B^{(r)}_{\lfloor\frac{r}{2} \rfloor - 1} \setminus W^{(r)}_{\lfloor\frac{r}{2} \rfloor - 1}$
    satisfying the properties of Lemma~\ref{Lemma: B minus W structure}.
    Then for all paths $P$ in $\mathcal{G}^{(r)}_k$, and for all distinct colours $a,b \in \{r-1,r\}$ when $k = 0$,
    and all distinct colours $a,b \in \{r-3,r-2,r-1,r\}$ when $k\geq 1$,
    there is an $r$-colouring $\phi: V(P) \rightarrow [r]$ of $\mathcal{G}^{(r)}_k[V(P)]$
    such that 
    \begin{enumerate}
        \item if $k = 0$,  then for all $1\leq \ell \leq \lfloor \frac{r}{2} \rfloor - 1$, $\phi^{-1}(\ell) \subseteq A^{(r)}_0$, and \label{Thm: Technical version of lower result (1)}
        \item if $k = 0$, then for all $\lfloor \frac{r}{2} \rfloor - 2\leq \ell \leq r - 2$, $\phi^{-1}(\ell) \subseteq B^{(r)}_0$, and \label{Thm: Technical version of lower result (2)}
        \item if $k \geq 1$, then for all $1\leq \ell \leq \lfloor \frac{r}{2} \rfloor - k - 2$, $\phi^{-1}(\ell) \subseteq A^{(r)}_k$, and \label{Thm: Technical version of lower result (3)}
        \item if $k \geq 1$, then for all $\lfloor \frac{r}{2} \rfloor - k - 1\leq \ell \leq r - 2k - 4$, $\phi^{-1}(\ell) \subseteq B^{(r)}_k$, and \label{Thm: Technical version of lower result (4)}
        \item if $v^A$ is on the path $P$, then $\phi(v^A) = a$, and \label{Thm: Technical version of lower result (5)}
        \item if $v^B$ is on the path $P$, then $\phi(v^B) = b$, and \label{Thm: Technical version of lower result (6)}
        \item if $k \leq \lfloor \frac{r}{2} \rfloor -3$, then
        the vertices of 
        $V(P) \cap \left( W^{(r)}_{k} \cup \{v: v \text{ is minimal in } \preceq\}\right)$ are a colour class in $\phi$,
        otherwise $k = \lfloor \frac{r}{2} \rfloor -2$ and the vertices of 
        \[
        \{v^A\} \cup \Big(V(P) \cap \left( W^{(r)}_{k} \cup \{v: v \text{ is minimal in } \preceq\}\right)\Big)
        \]
        are a colour class in $\phi$. \label{Thm: Technical version of lower result (7)}
    \end{enumerate}
    Moreover, for all paths $P$ in $\mathcal{G}^{(r)}_{\lfloor \frac{r}{2} \rfloor - 1}$
    there exists an $r$-colouring of $G^{(r)}_{\lfloor \frac{r}{2} \rfloor - 1}[P\setminus \{v^A,v^B\}]$.

If $k = 0$, then $G^{(r)}_k$ is isomorphic to $K_r$ and the result is trivial.
Suppose then that $0 \leq k < \lfloor \frac{r}{2} \rfloor - 1$ and that 
for all paths $P'$ in $\mathcal{G}^{(r)}_k$
there is an $r$-colouring $\phi': V(P') \rightarrow [r]$ of $\mathcal{G}^{(r)}_k[V(P)]$
satisfying (\ref{Thm: Technical version of lower result (1)})-(\ref{Thm: Technical version of lower result (7)}).

Let $P$ be a fixed but arbitrary path in $\mathcal{G}^{(r)}_{k+1}$.
Given $P$ we define the graph $Q$ as follows.
Let $V(Q) = \{x_1,\dots, x_{r+k-1}\} \cup \{y_1,\dots, y_{r+k-1}\}$
and 
let $(x_i,x_j) \in E(Q)$ if and only if $P$ contains a vertex in $G^{i,j}_{k, +}$
and let $(y_t,y_\ell) \in E(Q)$ if and only if $P$ contains a vertex in $G^{i,j}_{k,-}$.
  {Recall that} we use plus and minus in the subscripts of $G^{i,j}_{k}$ to delineate if $G^{i,j}_{k, \pm}$ is a
subgraph of $H_{k+1}^{(r),+}$ or $H_{k+1}^{(r),-}$.
There are no edges of the form $(x_i,y_t)\in E(Q)$.

\vspace{0.25cm}
\noindent\underline{Claim:} $\chi(Q) \leq 4$.
\vspace{0.25cm}

Since there are no edges of the form $(x_i,y_t)\in E(Q)$, it is sufficient to prove both 
$Q[\{x_1,\dots, x_{r+k-1}\}]$ and $Q[\{y_1,\dots, y_{r+k-1}\}]$ are $4$-colourable.
Without loss of generality consider $Q' = Q[\{x_1,\dots, x_{r+k-1}\}]$.
Recall that $\{x_i,x_j\}$ is a vertex cut that separates $G^{i,j}_{k,+}$ from the rest of $G^{(r)}_{k+1}$
and recall that $G^{(r)}_{k+1} = \mathcal{G}^{(r)}_{k+1} - \{v^A,v^B\}$.

Since $\{x_i,x_j\}$ is a vertex cut that separates $G^{i,j}_{k,+}$ from the rest of $G^{(r)}_{k+1}$,
if $P$ intersects a subgraph $G^{i,j}_{k, +}$,
which $P$ did not enter and exit through vertices $x_i$ and $x_j$,
then this subgraph contains an endpoint of $P$, or $P$
entered or exited this subgraph using vertices $\{v^A, v^B\}$.
Let $\{e_1,\dots, e_\ell\}$ be the set of all edges in $Q$ such that $e_q = (x_i,x_j)$
where $G^{i,j}_{k,+}$ contains an endpoint of $P$, or $P$ enters or exists $G^{i,j}_{k,+}$ using vertices 
$\{v^A, v^B\}$.
Given $P$ is a path, vertices $v^A,v^B$ can each be visited only once, and $P$ has at most $2$ endpoints.
Thus, $\ell \leq 6$.

Since $P$ is a path, the reader can easily verify $Q' - \{e_1,\dots, e_\ell\}$ is a forest of paths 
Hence, if $\ell \leq 5$, then Lemma~\ref{Lemma: Almost forest 4-colour} implies $\chi(Q')\leq 4$ as required.
Similarly, if $Q' - \{e_1,\dots, e_\ell\}$ is a forest with at least two connected components, 
then Lemma~\ref{Lemma: Worst case Almost forest 4-colour} implies $\chi(Q')\leq 4$ as required.
Suppose for the sake of contradiction that $\ell = 6$ and $Q' - \{e_1,\dots, e_6\}$ is connected.

Since $\ell = 6$, there are $6$ distinct subgraphs of $Q'$ defined by the edges $\{e_1,\dots, e_6\}$, call them $G^{i_1,j_1}_{k, +}, \dots, G^{i_6,j_6}_{k, +}$,
where each subgraph contains an endpoint of $P$ or the subgraph is entered or exited using vertices 
$v^A$ or $v^B$.
Without loss of generality,
suppose these subgraphs are visited by $P$ in the order of their indices, and
suppose $G^{i_1,j_1}_{k, +}$ and $G^{i_6,j_6}_{k, +}$ contain endpoints of $P$,
while $G^{i_2,j_2}_{k, +}$ is exited by $v^A$, $G^{i_3,j_3}_{k, +}$ is entered by $v^A$,
$G^{i_4,j_4}_{k, +}$ is exited by $v^B$, and $G^{i_5,j_5}_{k, +}$ is entered by $v^B$.
Then, we can consider three vertex-disjoint subpaths $P_1,P_2,P_3$ of $P$,
where $P_1$ begins in $G^{i_1,j_1}_{k, +}$ and ends when $P$ exits  $G^{i_2,j_2}_{k, +}$,
while $P_2$ begins in  $G^{i_3,j_3}_{k, +}$ and ends when $P$ exits  $G^{i_4,j_4}_{k, +}$,
and $P_3$ begins in $G^{i_5,j_5}_{k, +}$ and ends in $G^{i_6,j_6}_{k, +}$.

By the definition of $Q'$, the paths $P_1,P_2,P_3$ define vertex-disjoint connected subgraphs, say $Q'_1,Q'_2,Q'_3$,  of $Q' - \{e_1,\dots, e_6\}$ which partition the edges of $Q' - \{e_1,\dots, e_6\}$.
Thus, $Q' - \{e_1,\dots, e_6\}$ is disconnected.
But this is a contradiction.
This completes the proof of the claim.
\hfill $\diamond$
\vspace{0.25cm}

Suppose that $\psi$ is a $4$-colouring of $Q$ that uses colours $\{r-3,r-2,r-1,r\}$.
Recall that \[V(Q) = \{x_1,\dots, x_{r+k-1}\}\cup \{y_{1}, \dots, y_{r+k-1}\}.\]
We will extend $\psi$ to an $r$-colouring of $G^{(r)}_{k+1}[P-\{v^A,v^B\}]$ satisfying conditions (\ref{Thm: Technical version of lower result (1)})-(\ref{Thm: Technical version of lower result (4)}) and condition (\ref{Thm: Technical version of lower result (7)}) for parameter $k$.
By this we mean that the extended colouring $\psi$ will reserve colours for the sets $A^{(r)}_{k+1}$ and $B^{(r)}_{k+1}$ 
per conditions  (\ref{Thm: Technical version of lower result (1)})-(\ref{Thm: Technical version of lower result (4)}),
as if we are colouring a traceable subgraph of $\mathcal{G}^{(r)}_{k}$.
Moreover, we will ensure that the vertices in 
\[
\Big(V(P) \cap \left( W^{(r)}_{k} \cup \{v: v \text{ is minimal in } \preceq\}\right)\Big)
\]
form a colour class, again as if in a traceable subgraph of $\mathcal{G}^{(r)}_{k}$, per condition (\ref{Thm: Technical version of lower result (7)}).

Let $i,j$ be fixed but arbitrary and consider $G^{i,j}_{k,+}$ without loss of generality with respect to $G^{i,j}_{k,\pm}$.
Then  $\psi(x_i)$ and $\psi(x_j)$ are distinct colours in $\{r-3,r-2,r-1,r\}$
Lemma~\ref{Lemma: Induction-colour-component}
and the induction hypothesis implies 
we can extend $\psi$, using at most $r$-colours, to colour all the vertices of 
\[V(P) \cap (V(G^{i,j}_{k,+}) \cup \{x_i,x_j\})\]
while satisfying 
(\ref{Lemma: Induction-colour-component (I)})-(\ref{Lemma: Induction-colour-component (VII)}) with parameter $k$, for this choice of $i,j$.
We note the choice of colours $\{r-3,r-2,r-1,r\}$ which we assignment to $x_i,x_j$ depend on the indices $i$ and $j$, since these vertices were precoloured,
as well as our choice to consider $G^{i,j}_{k,+}$ rather than $G^{i,j}_{k,-}$.

If $k = \lfloor \frac{r}{2} \rfloor -2$, then all that remains to be shown is that $\psi$ can be extended to colour $\bar{u}, \bar{v},$ and $\bar{w}$;
since in this case we only need to prove that there is an $r$-colouring of $G^{(r)}_{k+1}[V(P)\setminus \{v^A,v^B\}]$.
Recall that $N(\{\bar{u}, \bar{v},\bar{w}\}) = V(Q)$, and recall that $\psi$ uses only $4$ colours to colour $V(Q)$.
Since $r\geq 6$ there are least $2$ colours available to every vertex $\bar{u}, \bar{v},\bar{w}$.
As $\{\bar{u}, \bar{v},\bar{w}\}$ induces a graph isomorphic to $K_1+K_2$, we can extend $\psi$ to $\{\bar{u}, \bar{v},\bar{w}\}$ using $2$  of $\psi$'s colours.

Otherwise, $k \leq \lfloor \frac{r}{2} \rfloor -3$.
Hence, by condition (\ref{Lemma: Induction-colour-component (VII)}) with parameter $k$ we can conclude that for every fixed $i,j$ the vertices of
\[
V(G^{i,j}_{k,\pm}) \cap \Big(V(P) \cap \left( W^{(r)}_{k+1} \cup \{v: v \text{ is minimal in } \preceq\}\right)\Big)
\]
are all the same colour, and we can assume this colour is not in $\{r-3,r-2,r-1,r\}$.
This colour class might be inconsistent over different choice of $i,j$, since our application of Lemma~\ref{Lemma: Induction-colour-component} depended on our choice of $i,j$.
But since each pair $G^{i,j}_{k,\pm}$ and $G^{i',j'}_{k,\pm}$
is only connect via $V(Q)$ in our already coloured graph, and the vertices of $V(Q)$ use colours $\{r-3,r-2,r-1,r\}$,
we can permute the colours of each $V(G^{i,j}_{k,\pm})$, so that (\ref{Lemma: Induction-colour-component (I)})-(\ref{Lemma: Induction-colour-component (VII)}) are satisfied globally, with respect to the already coloured vertices of $P$, by $\psi$.

Suppose then that $\psi$ satisfies (\ref{Lemma: Induction-colour-component (I)})-(\ref{Lemma: Induction-colour-component (VII)}) globally with respect to the already coloured vertices of $P$. Then,
\[
\Big(V(P) \cap \left( W^{(r)}_{k+1} \cup \{v: v \text{ is minimal in } \preceq\}\right)\Big) \setminus \{\bar{w}\}.
\]
is a colour class under $\psi$.
Call this colour $t$.
Every neighbour of $\bar{w}$, other than $\bar{u}$ and $\bar{v}$, which have not been given colours yet,
has a colour in $\{r-3,r-2,r-1,r\}$.
Hence, we can let $\psi(\bar{w}) = t$, therefore 
\[
V(P) \cap \left( W^{(r)}_{k+1} \cup \{v: v \text{ is minimal in } \preceq\}\right)
\]
is a colour class of $\psi$, satisfying condition (\ref{Thm: Technical version of lower result (7)}) for parameter $k$ (not yet $k+1$ as we have yet to colour $v^A$).
Since, $k \leq \lfloor \frac{r}{2} \rfloor -3$, we note that $1 \leq \lfloor \frac{r}{2} \rfloor - k -2$.
Hence, we may let $\psi(\bar{u}) = \psi(\bar{v}) = 1 \neq t$ by condition (\ref{Lemma: Induction-colour-component (I)}) or condition (\ref{Lemma: Induction-colour-component (III)}).
The reader can verify this choice of $\psi$ satisfies conditions (\ref{Thm: Technical version of lower result (1)})-(\ref{Thm: Technical version of lower result (4)}) and condition (\ref{Thm: Technical version of lower result (7)}) for parameter $k$ (not yet $k+1$ as we have yet to colour $v^A$ and $v^B$).

Finally, with $k \leq \lfloor \frac{r}{2} \rfloor -3$,
we consider how to modify $\psi$, 
an $r$-colouring of $G^{(r)}_{k+1}[V(P)\setminus~\{v^A,v^B\}]$ which satisfies conditions (\ref{Thm: Technical version of lower result (1)})-(\ref{Thm: Technical version of lower result (4)}) and condition (\ref{Thm: Technical version of lower result (7)}) for parameter $k$ (not parameter $k+1$ yet), 
into an $r$-colouring $\phi$ of $\mathcal{G}^{(r)}_{k+1}[V(P)]$ which satisfies conditions (\ref{Thm: Technical version of lower result (1)})-(\ref{Thm: Technical version of lower result (7)}) for parameter $k+1$.
Since, $k \leq \lfloor \frac{r}{2} \rfloor -3$, we note that
\begin{align*}
    1 \leq \left\lfloor \frac{r}{2} \right\rfloor - k -2 \hspace{0.5cm}\text{ and }\hspace{0.5cm} \left\lfloor\frac{r}{2} \right\rfloor - k -1 \leq r - 2k-4.
\end{align*}
Hence, conditions (\ref{Thm: Technical version of lower result (1)})-(\ref{Thm: Technical version of lower result (4)}), with parameter $k$, for $\psi$, imply that there exists a colour $x$ and a colour $y$
such that $\psi^{-1}(x) \subseteq A^{(r)}_{k+1}$ and $\psi^{-1}(y) \subseteq B^{(r)}_{k+1}$. Let $x$ be any colour satisfying $\psi^{-1}(x) \subseteq A^{(r)}_{k+1}$,
and should one exist, let $y \neq t$ be a colour satisfying $\psi^{-1}(y) \subseteq B^{(r)}_{k+1}$.
Notice that if $k< \lfloor \frac{r}{2} \rfloor -3$, then 
\[
\left\lfloor\frac{r}{2} \right\rfloor - k -1 < r - 2k-4
\]
so condition (\ref{Thm: Technical version of lower result (2)}) or (\ref{Thm: Technical version of lower result (4)}) implies such a colour $y$ exists.
Then, if no such colour $y$ exists, $k = \lfloor \frac{r}{2} \rfloor -3$ and we let $y = t$, since condition (\ref{Thm: Technical version of lower result (7)}) for $k = \lfloor \frac{r}{2} \rfloor -3$ forces $\psi^{-1}(t) \subseteq B^{(r)}_{k+1}$.

Let $\sigma$ be any permutation such that $\sigma(x) = b$, and $\sigma(y) = a$.
We define $\phi_\sigma$ as follows: let $\phi_\sigma(v^A) =a$ and $\phi_\sigma(v^B) = b$,
for each vertex $u \in V(P) \setminus \{v^A,v^B\}$, let $\phi_\sigma(u) = \sigma (\psi(u))$.
Since, $\psi$ is an $r$-colouring of $G^{(r)}_{k+1}[P\setminus\{v^A,v^B\}]$, 
and since $\psi^{-1}(x) \subseteq A^{(r)}_{k+1}$ and $\psi^{-1}(y) \subseteq B^{(r)}_{k+1}$,
while $a\neq b$, and 
\begin{align*}
    N[v^A] = A^{(r)}_{k+1} \cup \{v^B\} \hspace{0.5cm}\text{ and }\hspace{0.5cm}  N[v^B] = B^{(r)}_{k+1} \cup \{v^B\}
\end{align*}
it is immediate that $\phi_\sigma$ is an $r$-colouring of $\mathcal{G}^{(r)}_{k+1}[P]$.

For any choice of $\sigma$ as described 
$\phi_\sigma$ will satisfy conditions (\ref{Thm: Technical version of lower result (5)}) and (\ref{Thm: Technical version of lower result (6)}) trivially.
If $k < \lfloor \frac{r}{2} \rfloor -3$, then $y\neq t$ implies condition (\ref{Thm: Technical version of lower result (7)}) is satisfied for parameter $k+1$.
If $k = \lfloor \frac{r}{2} \rfloor -3$, then $k+1 = \lfloor \frac{r}{2} \rfloor -2$, and $y = t$,
so 
\[
V(P) \cap \left( W^{(r)}_{k+1} \cup \{v: v \text{ is minimal in } \preceq\}\right)
\]
is colour class $t$ of $\psi$, which implies that 
\[
\{v^A\} \cup \Big( V(P) \cap \left( W^{(r)}_{k+1} \cup \{v: v \text{ is minimal in } \preceq\}\right) \Big)
\]
is colour class $a$ in $\phi_\sigma$, thereby satisfying condition (\ref{Thm: Technical version of lower result (7)}) for parameter $k+1$.
Since all colour classes of $\psi$ other than $x$ and $y$ are equal to colour classes in $\phi_\sigma$,
conditions (\ref{Thm: Technical version of lower result (1)})-(\ref{Thm: Technical version of lower result (4)}) applying to $\psi$ for parameter $k$ imply that
there exists a permutation $\sigma'$ (satisfying the same conditions as $\sigma$) such that
conditions (\ref{Thm: Technical version of lower result (1)})-(\ref{Thm: Technical version of lower result (4)}) apply for parameter $k+1$ in $\phi_{\sigma'}$.

Let $\sigma'$ be such a permutation, and let $\phi = \phi_{\sigma'}$.
Then, $\phi$ is an $r$-colouring of $\mathcal{G}^{(r)}_{k+1}[V(P)]$ satisfying conditions (\ref{Thm: Technical version of lower result (1)})-(\ref{Thm: Technical version of lower result (7)}).
This completes the proof.
\end{proof}

\section{Future Work}
\label{sec: Future Work}

To conclude the paper we 
propose some conjectures and open problems for future work.
Of course, the principle problem in this area remains to resolve Conjecture~\ref{conjecture: r=3,k=4},
however this seems out of reach.
Instead we focus on introducing problems that seem more approachable, although in the same vein.

Extending 
our work from Section~\ref{sec: star-free} and Section~\ref{sec: small H-free}:

\begin{problem}
    What is the least integer $t$ such that there exists a $K_{1,t}$-free graph $G$ where $\chi(G) > r(G)$?
\end{problem}

Phrased differently, is every claw-free graph path-perfect?
If yes, then what is the greatest integer $\ell$ such that every $K_{1,\ell}$-free graph is path-perfect?
Notice that the left graph in Figure~\ref{fig:Gallai3-4Critical} has $\chi> r$ and is $K_{1,6}$-free. 

Similarly, given Lemma~\ref{Lemma: Ind = 1,2,3}, one can ask if graphs with independence number $4$ are path-perfect.

\begin{problem}
    Is every graph $G$ with $\alpha(G) = 4$ path-perfect?
\end{problem}

We also conjecture that Theorem~\ref{Thm: Small Forest Free} can be strengthened.

\begin{conjecture}
    If $G$ is $2K_2$-free or $(K_2+2K_1)$-free, then $G$ is path-perfect.
\end{conjecture}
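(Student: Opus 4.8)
The plan is to use the reformulation recorded in the paper: a class is path-perfect if and only if every vertex-critical graph in it is traceable. So let $G$ be a vertex-critical $2K_2$-free (respectively $(K_2+2K_1)$-free) graph with $\chi(G)=k$. We may assume $k\ge 3$, so that $G$ is connected, has no cut vertex, and $\delta(G)\ge k-1$. If $\alpha(G)\le 3$ we are done by Lemma~\ref{Lemma: Ind = 1,2,3}, and if $\kappa(G)\ge\alpha(G)$ we are done by Theorem~\ref{Thm: Chvatel-Erdos}; so assume $\alpha(G)\ge 4$ and $\kappa(G)<\alpha(G)$. Fix a longest path $P=v_1\cdots v_m$ and set $S=V(G)\setminus V(P)$. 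By Lemma~\ref{Lemma: Longest paths in 2K_2} (respectively Lemma~\ref{Lemma: Longest paths in K_2+2K_1}), $S$ is an independent set, and the whole task is to show $S=\emptyset$.

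Suppose $S\neq\emptyset$ and pick $v\in S$. Since $P$ is longest, every neighbour of $v$ lies on $P$, $v$ is adjacent to neither $v_1$ nor $v_m$, and $v$ is adjacent to no two consecutive vertices of $P$. A Chv\'{a}tal--Erd\H{o}s-style rotation argument then yields that $\{v\}$ together with the $P$-successors of the neighbours of $v$ forms an independent set $L$ with $|L|\ge\deg(v)+1\ge k$. The useful by-product is a supply of \emph{edge witnesses}: for each neighbour $v_i$ of $v$, the edge $v_iv_{i+1}$ of $P$ has its endpoint $v_{i+1}$ in $L$ while its endpoint $v_i$ is a non-neighbour of $v$; and when $m$ is large relative to $\alpha(G)$ there are also edges of $P$ disjoint from $L$ altogether.

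The crux is to play these witnesses against the forbidden induced subgraph. For a $2K_2$-free graph, if $I$ is an independent set and $xy$ an edge with $x,y\notin I$, then all but at most one vertex of $I$ is adjacent to $x$ or to $y$; for a $(K_2+2K_1)$-free graph, the common non-neighbourhood of the two ends of any edge induces a clique. Applying this with $I$ a maximum independent set absorbing a large piece of $L\cup S$, tested against the edges $v_iv_{i+1}$ (and against the $L$-avoiding edges of $P$), one should be able to force $N(v)\cup N(L)$ to cover $V(P)$ so tightly that, combined with the ``no two consecutive neighbours'' restriction on $v$, either $m=|V(G)|$ or $P$ admits a rotation through $v$ contradicting maximality. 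For the $2K_2$-free branch there is an alternative worth trying: invoke the known Hamiltonicity of sufficiently tough $2K_2$-free graphs and check that vertex-critical graphs meet the toughness threshold — though the thresholds currently in the literature exceed what criticality alone supplies, so I would pursue the direct argument. For the $(K_2+2K_1)$-free branch, since such graphs become a clique after deleting the closed neighbourhood of any edge, a second alternative is to obtain an explicit structural description of vertex-critical $(K_2+2K_1)$-free graphs and verify traceability directly.

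The main obstacle is precisely this last step: upgrading ``$V(G)\setminus V(P)$ is independent'' to ``$V(G)=V(P)$''. Vertex-criticality only gives $\delta(G)\ge\chi(G)-1$, which on its own is far too weak to force a long path to be spanning, so all of the leverage has to come from the interaction between the rotation structure of longest paths and the strong — but genuinely different — local consequences of excluding $2K_2$ versus excluding $K_2+2K_1$ (neither of these two graphs is an induced subgraph of the other, so the two cases do not reduce to one another). A secondary difficulty is that $G[V(P)]$ need not inherit the $2$-connectedness of $G$, so one has to be careful about which rotations are actually available when setting up $L$.
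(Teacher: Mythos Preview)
This statement is a \emph{conjecture} in the paper, not a theorem: the authors explicitly leave it open (see the sentence just after Theorem~\ref{Thm: Small Forest Free} and the Future Work section). There is no proof in the paper for you to be compared against.

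Your proposal is not a proof either, and you are candid about this. The setup is sound: the reduction to ``every vertex-critical graph is traceable'' is exactly the right reformulation, the use of Lemmas~\ref{Lemma: Longest paths in K_2+2K_1} and~\ref{Lemma: Longest paths in 2K_2} to make $S=V(G)\setminus V(P)$ independent is correct, and the rotation argument producing the independent set $L$ of size at least $\deg(v)+1\ge k$ is standard and valid. But from that point on you only gesture at what ``should'' happen; the paragraph beginning ``The crux is\ldots'' contains no argument, and you yourself identify the gap in the final paragraph. Upgrading ``$S$ is independent'' to ``$S=\emptyset$'' is precisely the content of the conjecture, and nothing you have written bridges it.

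There is also a slip in your description of the edge witnesses: you write that for a neighbour $v_i$ of $v$, the edge $v_iv_{i+1}$ has ``its endpoint $v_i$ \ldots\ a non-neighbour of $v$'', which contradicts $v_i\in N(v)$. Presumably you meant that $v_{i+1}$ is a non-neighbour of $v$ (true, since $v$ has no two consecutive neighbours on $P$), but as written the witness structure you intend to exploit is misstated. Finally, the toughness route you float for the $2K_2$-free case is, as you note, not currently known to work at the threshold that criticality provides, so it is not a viable shortcut.
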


Moreover, we believe that an even stronger statement could be true.

\begin{conjecture}
    If $G$ is $P_5$-free graph, then $G$ is path-perfect. 
\end{conjecture}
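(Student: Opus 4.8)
The plan is to reduce the conjecture to a statement about vertex-critical graphs and then attack it via the dominating-set structure of $P_5$-free graphs. As explained in the paragraph preceding Theorem~\ref{Thm: Small Forest Free}, it suffices to prove that every vertex-critical $P_5$-free graph is traceable: an arbitrary $P_5$-free graph $G$ has a vertex-critical induced subgraph $G^{*}$ with $\chi(G^{*})=\chi(G)$, and $G^{*}$ is again $P_5$-free, so a Hamiltonian path in $G^{*}$ witnesses $r(G)\ge \chi(G)$, while $r(G)\le\chi(G)$ is trivial. Since a vertex-critical graph has no cutvertex (\cite[Lemma 2.3]{Dhaliwal2017}), such a $G^{*}$ is $2$-connected, has $\delta(G^{*})\ge\chi(G^{*})-1$, and has $|V(G^{*})|\le\alpha(G^{*})\,\chi(G^{*})$; these last two facts should be essential. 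So the working hypothesis is: $G$ is a smallest vertex-critical $P_5$-free graph that is not traceable.

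First I would dispose of the ``dense'' case. Adding a universal vertex $u$ to $G$ preserves $P_5$-freeness, since no induced path on four or more vertices can pass through a vertex adjacent to everything; moreover $G$ is traceable if and only if $G+K_1$ is Hamiltonian. Applying Theorem~\ref{Thm: Chvatel-Erdos} to $G+K_1$ (where $\kappa(G+K_1)=\kappa(G)+1$ and $\alpha(G+K_1)=\alpha(G)$) shows that $\kappa(G)\ge\alpha(G)-1$ already forces $G$ traceable. Hence we may assume $\alpha(G)\ge\kappa(G)+2\ge 4$, which in particular pushes us past the range covered by Lemma~\ref{Lemma: Ind = 1,2,3}. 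Next I would invoke the Bacsó--Tuza structure theorem: every connected $P_5$-free graph has a dominating clique or a dominating induced $P_3$. Let $D$ be such a dominating set. The core of the argument is to combine $D$ with the inequalities $\delta(G)\ge\chi(G)-1$ and $|V(G)|\le\alpha(G)\chi(G)$ to route a Hamiltonian path: for each vertex $v\notin D$ one records its ``type'' (which vertices of $D$ it sees), uses $P_5$-freeness to show that vertices of a common type behave almost like a clique module (in the spirit of the clique-sum peeling used in Lemma~\ref{Lemma: Ind = 1,2,3}), and then either exhibits a small cut that contradicts minimality of $G$ via a clique-sum reduction, or assembles the Hamiltonian path directly from $D$ together with these clique-like blocks.

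The main obstacle, I expect, is precisely this last step of converting the dominating clique or $P_3$ into a Hamiltonian path. For $(P_3+K_1)$-free graphs the analogous step worked only because of the strong dichotomy $\alpha\le 2$ for vertex-critical graphs (Lemma~\ref{Lemma: P_3+K_1 vertex-critical}, from \cite{cameron2022dichotomizing}); no comparable classification of vertex-critical $P_5$-free graphs is known, and there are such graphs with arbitrarily large chromatic number and unbounded independence number. Ruling out ``$K_{2,t}$-like'' non-traceable configurations genuinely relies on criticality through the minimum-degree bound, and making this quantitative enough to cover the dominating-$P_3$ case, where the non-neighbourhood of the middle vertex can be intricate, is where a new structural input seems to be required. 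A natural intermediate target is to first settle the conjecture for $3$-connected $P_5$-free vertex-critical graphs, or under the extra hypothesis $\omega(G)\le 3$, which already captures the essence of Gy\'arf\'as' original question.
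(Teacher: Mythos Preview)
The statement you are trying to prove is stated in the paper as a \emph{conjecture} in the Future Work section; the paper offers no proof and explicitly leaves it open. So there is no ``paper's own proof'' to compare your proposal against.

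Your proposal is also, by your own admission, not a proof but an outline that stalls at the crucial step. The reductions you make are sound: it is enough to show that every vertex-critical $P_5$-free graph is traceable, and the Chv\'atal--Erd\H{o}s trick with a universal vertex does handle the range $\alpha(G)\le\kappa(G)+1$. But after invoking Bacs\'o--Tuza you write that converting the dominating clique or $P_3$ into a Hamiltonian path ``is where a new structural input seems to be required,'' and you end by proposing intermediate targets rather than completing the argument. That is an honest assessment, and it is exactly why the paper records this as a conjecture rather than a theorem: no classification of vertex-critical $P_5$-free graphs analogous to Lemma~\ref{Lemma: P_3+K_1 vertex-critical} is available, and without one the dominating-set structure alone does not obviously yield traceability. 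Your plan is a reasonable line of attack on an open problem, but it is not a proof.
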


Of course one can also ask, what is the least integer $t$, such that there exists a $P_t$-free graph $G$ with $\chi(G) > r(G)$?
In this case, it is worth noting that the
left graph in Figure~\ref{fig:Gallai3-4Critical} has $\chi> r$ and is $P_{10}$-free. 

Our final problem relating to graphs with a forbidden induced subgraph, is a further weakening of Hajnal and Erd\H{o}s' problem.
This weakening is of interest, since it marks a stark difference $\chi$-boundedness and bounding chromatic number by a function of $r$.
It is not obvious to the authors what a good candidate for the graph $H$ should be.

\begin{conjecture}
    There exists a graph $H$ and a function $h = h_H$ 
    such that $H$ contains a cycle and if $G$ is $H$-free, 
    then $\chi(G) \leq h(r(G))$.
\end{conjecture}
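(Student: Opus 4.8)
The plan is to produce a single graph $H$ containing a cycle, together with a function $h_H$, and it helps first to see what any such $H$ can possibly give. If the shortest cycle of $H$ has length $g$, then $H$ is not a subgraph — hence not an induced subgraph — of any graph of girth exceeding $g$, so the family of $H$-free graphs always contains every graph of girth greater than $g$; by Erd\H{o}s' theorem~\cite{erdos1959graph} these already have unbounded chromatic number, so the conjecture for $H$ implies in particular that graphs of girth greater than $g$ satisfy $\chi(G)\le h(r(G))$. The cleanest target is $g=3$: for $H=K_3$ the class of $H$-free graphs is \emph{exactly} the triangle-free graphs, and the conjecture becomes ``triangle-free graphs are $r$-bounded''. (If that is too strong, one can retreat to $H=C_4$, whose exclusion permits triangles but forces a little local sparsity; the obstruction below is essentially the same.) So the main body of the plan is to show that some function $h$ satisfies $\chi(G)\le h(r(G))$ for every triangle-free $G$.

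The approach reduces this to locating, inside any triangle-free graph of large chromatic number, a \emph{traceable} induced subgraph of large chromatic number: if $P$ is a Hamilton path of such a subgraph $G[V(P)]$, then $P$ is a path of $G$ and $r(G)\ge\chi(G[V(P)])$, which is exactly the observation used throughout Section~\ref{sec: star-free} that Hamiltonian graphs have $\chi=r$. Given triangle-free $G$ with $\chi(G)=c$, I would pass to a $c$-vertex-critical subgraph $G'$, so that $G'$ is triangle-free, $\chi(G')=c$, and $\delta(G')\ge c-1$, and then attempt a greedy ``long snake'': build a path $v_1,v_2,\dots$ in $G'$ maintaining the invariant that $\chi\big(G'[\{v_1,\dots,v_t\}]\big)$ stays at least a prescribed unbounded function of $c$, using the minimum-degree hypothesis to guarantee an extension into a part of the graph that still has large chromatic number, and triangle-freeness to control how the newly added vertex attaches to the path already built, so that colour classes are not destroyed. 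If the snake can be driven forward while keeping the chromatic number of its induced subgraph large, then $r(G)$ is large and we are done.

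The main obstacle — and, I suspect, exactly why the authors flag this as hard — is precisely that invariant: nothing local (degree or girth) forces the chromatic number of the snake's induced subgraph to \emph{remain} large, and triangle-free (more generally high-girth) graphs are precisely those engineered to contain no dense, structured subgraph, so one is hunting for a structured subgraph inside a graph built to avoid one. I expect the greedy construction to break down, because an extension may be forced to revisit vertices already on the path or to thread through low-chromatic pockets, and $\chi$ of an induced subgraph is not well behaved under such local surgery. It is even conceivable that the target is false — that some triangle-free (or high-girth) graph has all of its traceable induced subgraphs of bounded chromatic number — in which case no cyclic $H$ works and the final conjecture fails; a reasonable concrete first step is thus to settle this for triangle-free graphs directly, for example by computing $r$ for iterated Mycielskians or for Kneser graphs and checking that it grows with $\chi$.

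Finally, assuming the high-girth statement is in hand for some $g$, the passage from ``girth greater than $g$'' to ``$H$-free, short induced cycles allowed'' should be comparatively routine: a short induced cycle $C_\ell$ with $\ell\le g$ certifies local density, so one can run a Ramsey-type clean-up — peeling off closed neighbourhoods and applying the bounded-independence-number bound of Lemma~\ref{Lemma: Bounded Ind}, as in the proof of Theorem~\ref{Thm: K_1,t-free} — to reduce an arbitrary $H$-free graph to the high-girth case, and then take $H=C_{g+1}$ (or any convenient connected graph of girth $g+1$) to finish.
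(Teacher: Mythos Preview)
This statement is a \emph{conjecture} in the paper's Future Work section (Section~\ref{sec: Future Work}); the paper offers no proof, and explicitly says it is ``not obvious to the authors what a good candidate for the graph $H$ should be''. There is therefore nothing to compare your proposal against.

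Your proposal is not a proof either, and you say so yourself: the central step --- building a ``greedy snake'' in a triangle-free critical graph while keeping the chromatic number of its induced subgraph large --- is one you ``expect\dots to break down'', and you even flag the possibility that the conjecture is false. What you have written is a sensible discussion of why the problem is hard, not an argument that resolves it.

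Two substantive remarks. First, your reduction is correct and worth stating clearly: if $H$ has girth $g$, then every graph of girth exceeding $g$ is $H$-free, so the conjecture for \emph{any} cyclic $H$ already implies that, for some fixed $g$, all graphs of girth greater than $g$ satisfy $\chi\le h(r)$. That is not known, and is not obviously easier than Conjecture~\ref{conjecture: r=3,k=4} itself. Second, your final paragraph --- claiming the passage from ``girth $>g$'' back to ``$H$-free'' is ``comparatively routine'' via Lemma~\ref{Lemma: Bounded Ind} --- is not justified: the presence of a short induced cycle does not force bounded independence number in any neighbourhood, so the mechanism used in Theorem~\ref{Thm: K_1,t-free} does not transfer. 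That reduction would itself need a new idea.
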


To conclude we conjecture that our main result, Theorem~\ref{Thm: Lower result}, can be improved as follows.

\begin{conjecture}
    For every constant $c>0$ there exists a graph $G$ with
    \[
    \chi(G) > c\cdot r(G).
    \]
\end{conjecture}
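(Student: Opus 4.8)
The target is to strengthen Theorem~\ref{Thm: Lower result} from a ratio tending to $3/2$ to an unbounded ratio (the additive gap of Corollary~\ref{Coro: Arbitrary Gap} is far too weak to imply this). The plan is to modify the construction of Section~\ref{sec: construction} so that the chromatic number still grows linearly in the recursion depth $k$ while the path-colour parameter stays bounded by a function $g(r)$ of $r$ alone; the graphs $F_k$ so obtained would satisfy $\chi(F_k)=r+k$ and $r(F_k)\le g(r)$, so $\chi(F_k)/r(F_k)\to\infty$ as $k\to\infty$ with $r$ fixed. Half of this is already available: the proof of Theorem~\ref{Thm: Colour G^r_k} gives $\chi(G^{(r)}_k)=r+k$ for \emph{every} $k\ge 0$, the bound $k\le\lfloor r/2\rfloor-1$ being used nowhere in that argument. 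So the whole difficulty is to stop $r(\cdot)$ from growing with $k$.

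With the present gadget this is impossible, and seeing why points to the fix. By Lemma~\ref{Lemma: B minus W structure}(1)--(2) a maximal $\preceq$-chain is a clique, so $G^{(r)}_k[B^{(r)}_k]$ contains a clique of size $\lceil r/2\rceil+k$, and hence (its Hamilton path being a path in $G^{(r)}_k$) $r(G^{(r)}_k)\ge\lceil r/2\rceil+k$; for the unmodified construction this already forces $\chi(G^{(r)}_k)/r(G^{(r)}_k)\to 1$. The culprit is the vertex $x_j$ of each copy $G^{i,j}_k$, which is joined to \emph{all} of $B^{i,j}_k$ and so sits atop an ever-lengthening chain. I would redesign the gluing so that $x_j$ is replaced by a bounded-size independent set $Y^{i,j}$ of new vertices, each joined to only one of the (ideally bounded-size) components of $G^{(r)}_k[B^{(r)}_k]$ — the $A$-components already being bounded cliques by Lemma~\ref{Lemma: Induced As} — while arranging that $\{x_i\}\cup Y^{i,j}$ still \emph{dominates} $G^{i,j}_k$. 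Domination is all the forcing half of the proof uses: the argument of Lemma~\ref{Lemma: Colour H_k+1} only needs that $\{x_i,x_j\}$ dominates $G^{i,j}_k$ together with $\chi(G^{i,j}_k)=r+k$, so one re-derives that forcing conclusion for the ``many small dominators'' gadget and then repeats the $H^{\pm}$ and identification steps essentially verbatim.

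With the $B$-side clique structure stabilized, I would re-run the colouring induction of Lemma~\ref{Lemma: Induction-colour-component} and Theorem~\ref{Thm: Technical version of lower result} with a fixed palette of $g(r)$ colours in place of $r$: the only spot where the old argument consumes a growing number of colours is SubCase~2.1, where an $A$-clique $C$ of size at most $\lfloor r/2\rfloor$ must avoid the colours of its already-coloured $B$-neighbours, and a constant buffer of extra colours reserved for such cliques resolves this since $|C|\le\lfloor r/2\rfloor$ no matter how large $k$ becomes. The main obstacle I expect is that the two demands on the new gadget pull against each other: forcing $\{x_i\}\cup Y^{i,j}$ to dominate $G^{i,j}_k$ puts many edges from $Y^{i,j}$ into $G^{i,j}_k$, and a path is then free to weave through $Y^{i,j}$ and several $B$-components at once, re-creating exactly the high-chromatic path-spanned subgraph that the small cuts of the original construction ruled out. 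Should this surgery prove intractable, the fallback is to abandon the $K_r$-rooted recursion altogether and iterate the chromatic-forcing gadget over a base graph of bounded clique number, such as a Kneser or shift graph, betting that the subgraph induced by the vertex set of any path still has a bounded ``trace'' and hence bounded chromatic number, while the iterated chromatic number runs to infinity.
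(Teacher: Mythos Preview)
This statement is a \emph{conjecture} in the paper's Future Work section; the paper does not prove it and offers no proof sketch. So there is nothing to compare your proposal against --- you are attacking an open problem, not reproducing a known argument.

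As a research plan your diagnosis is accurate and the paper confirms it: the remark preceding Theorem~\ref{Thm: Colour G^r_k} explicitly says the chromatic-number argument works for arbitrary $k$, and your observation via Lemma~\ref{Lemma: B minus W structure}(\ref{Lemma: B minus W structure (1)})--(\ref{Lemma: B minus W structure (2)}) that $G^{(r)}_k$ contains a clique of size $\lceil r/2\rceil+k$ is correct and indeed kills the ratio for the unmodified construction. But what you have written is not a proof: it is a proposal to redesign the gadget, together with a candid admission that the two design constraints (domination for the chromatic lower bound, sparse attachment for the $r$-upper bound) pull in opposite directions. That tension is precisely the difficulty, and nothing in your write-up resolves it. In particular, once $x_j$ is replaced by an independent set $Y^{i,j}$ whose members each see only a bounded piece of $B^{i,j}_k$, the cut separating $G^{i,j}_k$ from the rest of the graph has size growing with $k$, and the entire path-colouring machinery of Section~\ref{sec: path-colour} --- which relies on $\{x_i,x_j,v^A,v^B\}$ being a cut of size four so that $\ell\le 3$ and the auxiliary graph $Q$ is close to a forest --- collapses. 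Your fallback (iterating over a Kneser-type base) is a reasonable direction to explore but is likewise only a hope, not an argument. As it stands this is a plausible outline of where to look, not a proof of the conjecture.
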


\section*{Acknowledgement}

The authors would like to thank Ben Seamone
for introducing us to Gy\'{a}rf\'{a}s' conjecture in personal correspondence with Clow.
The conjecture captured our attention, despite our unsuccessful attempt to prove it, and we do not believe we would have encountered it otherwise.
Ben Cameron gratefully acknowledges support from the Natural Sciences and Engineering Research Council of Canada (NSERC), grants RGPIN-2022-03697 and DGECR-2022-00446.

\bibliographystyle{abbrv}
\bibliography{bib}

\end{document}